\newcommand{\A}{\boldsymbol{A}}
\newcommand{\B}{\boldsymbol{B}}
\newcommand{\E}{\boldsymbol{E}}
\newcommand{\J}{\boldsymbol{J}}
\newcommand{\I}{\boldsymbol{I}}
\newcommand{\T}{\boldsymbol{T}}
\newtheorem{theorem}{Theorem}[section]
\newtheorem{proposition}[theorem]{Proposition}
\newtheorem{lemma}[theorem]{Lemma}
\newtheorem{corollary}[theorem]{Corollary}
\theoremstyle{definition}
\newtheorem{problem}[theorem]{Problem}
\newtheorem{remark}[theorem]{Remark}
\title{On the spectrum and linear programming bound for hypergraphs}  
\author{
Sebastian M. Cioab\u{a}, Jack H. Koolen, Masato Mimura,  \\
 Hiroshi Nozaki, and Takayuki Okuda
}
\begin{document}
\maketitle

\renewcommand{\thefootnote}{\fnsymbol{footnote}}
\footnote[0]{2010 Mathematics Subject Classification: 
05C50 (90C05)
}

\begin{abstract}
The spectrum of a graph is closely related to many graph parameters. In particular, the spectral gap of a regular graph which is the difference between its valency and second eigenvalue, is widely seen an algebraic measure of connectivity and plays a key role in the theory of expander graphs. In this paper, we extend previous work done for graphs and bipartite graphs and present a linear programming method for obtaining an upper bound on the order of a regular uniform hypergraph with prescribed distinct eigenvalues. Furthermore, we obtain a general upper bound on the order of a regular uniform hypergraph whose second eigenvalue is bounded by a given value. Our results improve and extend previous work done by Feng--Li (1996) on Alon--Boppana theorems for regular hypergraphs and by Dinitz--Schapira--Shahaf (2020) on the Moore or degree-diameter problem. We also determine the largest order of an $r$-regular $u$-uniform hypergraph with second eigenvalue at most $\theta$ for several parameters $(r,u,\theta)$. In particular, orthogonal arrays give the structure of the largest hypergraphs with second eigenvalue at most $1$ for every sufficiently large $r$. Moreover, we show that a generalized Moore geometry has the largest spectral gap among all hypergraphs of that order and degree.   
\end{abstract}
\textbf{Key words}: 
regular uniform hypergraph, biregular graph, 
graph spectrum, second eigenvalue, expander, linear programming bound, distance-regular graph, generalized Moore geometry. 

\section{Introduction}

Let $H=(V,E)$ be a hypergraph, where $V$ is a finite set of vertices, and $E$ is a collection of subsets of $V$, called (hyper)edges. Suppose each element of $E$ has size at least $2$. The hypergraph $H$ is {\it $u$-uniform} if each edge contains exactly $u$ vertices and is {\it $r$-regular} if each vertex is contained in precisely $r$ edges. An $r$-regular $2$-uniform hypergraph $H$ is an $r$-regular graph. The {\it adjacency matrix} $\A$ of a hypergraph $H$ is the square matrix whose rows and columns are indexed by $V$ with entries 
\[
\A_{x,y} =
\begin{cases}
|\{ e \in E \colon\, \{x,y\} \subset e \}| \text{ if $x\ne y$}, \\
0 \text{ if $x=y$}. 
\end{cases}
\]
The eigenvalues of $\A$ are called the {\it eigenvalues} of $H$. There are other ways to define eigenvalues for hypergraphs, for example, as the eigenvalues for the adjacency hypermatrix (adjacency tensor) (see \cite{FW95,Lim05,Qi5,CD12}) or the $(u-2)$-th Laplacian matrix of the simplicial complex consisting of all subsets of edges of a $u$-uniform hypergraph (see \cite[Section 3.12]{BH12} or \cite{DR02,HJ13,PR17,BGP19}).
If $|V|=n$, we denote the eigenvalues of $H$ by $\tau_1\geq \tau_2\geq \dots \geq \tau_n$. 
If $H$ is an $r$-regular $u$-uniform hypergraph, then $\tau_1=r(u-1)$. The {\it spectral gap} $\Delta(H)$ is $\tau_1-\tau_2=r(u-1)-\tau_2$. The {\it Cheeger constant} or {\em expansion constant} $h(H)$ of the hypergraph $H$, is defined as
\[
h(H)=\min_{S\subset V, |S| \leq |V|/2} \frac{|\partial S|}{|S|},
\]
where $\partial S=\{e \in E \mid e \cap S \ne \emptyset, e \cap (V \setminus S) \ne \emptyset\}$. There are close connections between the spectral graph theory and the Cheeger constant for graphs \cite{A86,AM85,D84} and for hypergraphs \cite{R09} that led to the investigation of expander graphs and hypergraphs with good expansion properties by using their second eigenvalue.

For regular graphs, the second eigenvalue has been studied by several authors and a fundamental result due to Alon--Boppana \cite{A86,AM85} implies that for $r\geq 3$ and $\theta<2\sqrt{r-1}$, there are only finitely many $r$-regular graphs with second eigenvalue at most $\theta$. For the general case of regular uniform hypergraphs, Feng and Li \cite{FL96} (see also Li and Sol\'e \cite{LS96}) showed that if $H$ is an $r$-regular $u$-uniform hypergraph whose diameter is at least $2\ell+2$, then
\begin{equation} \label{eq:FL}
\tau_2(H) \geq u-2 +2 \sqrt{q}-\frac{2\sqrt{q}-1}{\ell}, 
\end{equation}
where $q=(r-1)(u-1)$. These results imply that for any $\theta<u-2+2\sqrt{q}$, there are only finitely many $r$-regular $u$-uniform hypergraphs $H$ with $\tau_2(H)\leq \theta$. 
An $r$-regular $u$-uniform hypergraph $H$ is called {\it Ramanujan} if 
$|\tau_2(H)-(u-2)| \leq 2 \sqrt{q}$. For more information on Ramanujan hypergraphs, see \cite{B00,L04,M00,M01} or \cite[Sec 1.5.6]{Tb}. The recent breakthrough work of Marcus, Spielman and Srivastava \cite{MSS15} imply that for any fixed $r,u\geq 3$, there exist infinite families of $r$-regular $u$-uniform Ramanujan hypergraphs. The following two problems arise naturally and we study them in this paper.
\begin{enumerate}
\item Given $r,u\geq 3$ and $\theta<u-2+2\sqrt{q}$, determine the maximum number of vertices of an $r$-regular $u$-uniform hypergraph $H$ with $\tau_2(H)\leq \theta$. 
\item Given $n,r,u\geq 3$, determine the $r$-regular $u$-uniform hypergraphs $H$ with smallest $\tau_2$ among all $r$-regular $u$-uniform hypergraphs with $n$ vertices.
\end{enumerate}
These problems have been studied for regular graphs \cite{CKNV16,N15} and bipartite regular graphs \cite{CKN19}. In this paper, we  extend these results to general hypergraphs (or equivalently, biregular bipartite graphs). We also point out some connections between our work and the Moore or degree-diameter problem which asks for the maximum order of a regular graph or hypergraph of given valency and diameter. Roughly speaking, we can use our spectral results to show that if a regular graph or hypergraph has a {\em small} second eigenvalue $\tau_2$ (this is described precisely later in the paper), then it cannot have the largest order. This result is an extension of a result in \cite{DSS}. Moore polygons \cite{DG81} are optimal graphs in regard to the problems (1) and (2) for $r$-regular simple graphs as proved in \cite{CKNV16,N15}. The linear programming bound (LP bound) below and the orthogonal polynomials $(F_i(x))_{i\geq 0}$ therein played an essential role in obtaining these results. 
\begin{theorem}[LP bound \cite{N15}]
Let $G$ be a connected $r$-regular graph with order $n$ and distinct eigenvalues $\tau_1=r>\tau_2>\ldots >\tau_d$. Let $(F_i(x))_{i\geq 0}$ be the sequence of orthogonal polynomials defined by 
$F_0(x)=1$, $F_1(x)=x$, $F_2(x)=x^2-r$, and  
$
F_i(x)=x F_{i-1}(x)-(r-1)F_{i-2}(x) 
$
for $i\geq 3$. If there exists a polynomial $f(x)=\sum_{i\geq 0}f_iF_i(x)$ such that $f(r)>0$, $f(\tau_j)\leq 0$ for each $1\leq j\leq d$, $f_0>0$ and $f_i\geq 0$ for each $1\leq i\leq \deg f$, then $n \leq f(r)/f_0$. 
\end{theorem}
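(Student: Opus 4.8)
The plan is to estimate $\tr f(\A)$ in two different ways, where $\A$ is the adjacency matrix of $G$, and then to compare the two expressions. Since $f(x)=\sum_{i}f_iF_i(x)$ is a polynomial, $f(\A)=\sum_i f_i F_i(\A)$ is well defined and has eigenvalues $f(\tau_j)$ with the same multiplicities $m_j$ as the $\tau_j$. On the spectral side I would therefore write $\tr f(\A)=\sum_{j=1}^d m_j f(\tau_j)$. Because $G$ is connected and $r$-regular, Perron--Frobenius gives $m_1=1$ with $\tau_1=r$, so
\[
\tr f(\A)=f(r)+\sum_{j=2}^d m_j f(\tau_j)\le f(r),
\]
using $f(\tau_j)\le 0$ and $m_j>0$ for $j\ge 2$. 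This supplies the upper estimate for the trace.

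The heart of the matter is the matching lower bound, and first I would pin down the combinatorial meaning of the matrices $F_i(\A)$. The claim is that the $(x,y)$-entry of $F_i(\A)$ counts the non-backtracking walks of length $i$ from $x$ to $y$ in $G$, i.e.\ walks $v_0v_1\cdots v_i$ with $v_{k-1}\ne v_{k+1}$ for all $k$. I would prove this by induction using the defining recurrence: splitting each one-step extension of a non-backtracking walk according to whether the new step backtracks shows $\A\,F_i(\A)=F_{i+1}(\A)+(r-1)F_{i-1}(\A)$ for $i\ge 2$, together with the base identities $F_0(\A)=\I$, $F_1(\A)=\A$, and $F_2(\A)=\A^2-r\I$. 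In particular every entry of $F_i(\A)$ is a nonnegative integer, hence $\tr F_i(\A)\ge 0$ for $i\ge 1$, while $\tr F_0(\A)=\tr\I=n$.

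Granting this, the lower bound is immediate: since $f_0>0$ and $f_i\ge 0$,
\[
\tr f(\A)=\sum_{i\ge 0}f_i\,\tr F_i(\A)=f_0\,n+\sum_{i\ge 1}f_i\,\tr F_i(\A)\ge f_0\,n.
\]
Combining the two estimates gives $f_0\,n\le \tr f(\A)\le f(r)$, and dividing by $f_0>0$ yields $n\le f(r)/f_0$, as required.

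I expect the only genuine obstacle to be the walk-counting identity for $F_i(\A)$; everything else is formal trace bookkeeping. The delicate point there is the case analysis separating $i=1$ from $i\ge 2$, which is exactly what forces $F_2$ to carry the coefficient $r$ rather than $r-1$ (a length-one walk has no predecessor vertex to avoid), and the verification that each backtracking extension of a non-backtracking walk of length $i$ is recorded precisely $(r-1)$ times by $F_{i-1}(\A)$. Once the nonnegativity of the entries---equivalently, of the traces---of the $F_i(\A)$ is secured, the linear-programming inequality is nothing more than the two-sided trace comparison above.
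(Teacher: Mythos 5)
Your proof is correct and follows essentially the same route the paper takes for its hypergraph generalization (Theorem~\ref{thm:lp_bound}), specialized to $u=2$: the non-backtracking walk interpretation of $F_i(\A)$ proved by induction on the three-term recurrence is exactly Theorem~\ref{thm:num_non-back}, and your two-sided trace comparison $f_0 n \le \tr f(\A) \le f(r)$ is the same computation the paper carries out via the spectral decomposition $\A=\sum_j \tau_j \E_j$ with $\tr \E_0 = 1$ playing the role of your Perron--Frobenius observation $m_1=1$. No gaps.
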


In the present paper, we generalize the LP bound to $r$-regular $u$-uniform hypergraphs for any degrees $r,u\geq 3$. The universal cover of a connected $r$-regular graph is the homogeneous tree of degree $r$ which is an infinite distance-regular graph \cite{HO07}. The orthogonal polynomials $(F_i(x))_{i\geq 0}$ are obtained from the intersection numbers of this distance-regular graph. The infinite distance-regular graphs are completely classified \cite{I83} (see also \cite{BI86,MW89}) and they are universal covers of connected $r$-regular $u$-uniform hypergraphs. We will obtain the linear programming bound for hypergraphs using certain orthogonal polynomials associated with these infinite distance-regular graphs.  

The present paper is organized as follows. In Section~\ref{2}, we describe the basic objects studied in this paper and the relationships between biregular graphs and regular uniform hypergraphs. Knowing the spectrum of a bipartite biregular graph is essentially the same as knowing that of the corresponding regular uniform hypergraph. Therefore, results for regular uniform hypergraphs have a natural interpretation for bipartite biregular graphs. In Section~\ref{3}, we introduce the infinite distance-regular graphs, and present some of their properties and of their associated orthogonal polynomials. In Section~\ref{4}, we prove the linear programming bound on the order of a connected $r$-regular $u$-uniform hypergraph (Theorem \ref{thm:lp_bound}) that generalizes the LP bound above.
In Section~\ref{5}, we prove that if an $r$-regular $u$-uniform hypergraph of girth $g$ and diameter $d$ with precisely $d+1$ distinct eigenvalues satisfies $g\geq 2d-1$, then the point graph of the hypergraph is distance-regular. If $g\geq 2d$, then the hypergraphs are known as generalized Moore geometries (see Section~\ref{5} for more details). In Section~\ref{6}, we obtain an upper bound on the order of a hypergraph with given second eigenvalue (Theorem \ref{thm:largest_graph}), that generalizes the result in \cite{CKNV16} for the problem (1). We also show that the hypergraphs attaining equality in Theorem \ref{thm:largest_graph} are generalized Moore geometries. Theorem \ref{thm:largest_graph} also improves the contrapositive of the above result \eqref{eq:FL} from Feng and Li \cite{FL96}. It also extends an upper bound obtained by Dinitz, Schapira and Shahaf \cite{DSS} related to the degree-diameter problem. In Section~\ref{7}, we determine the exact value of the maximum order for the problem (1) for several parameters. In particular, if $r\geq \max \{7u-5,u^2-1\}$ and there exists an orthogonal array $OA(u+1,r+1)$, then we prove that $h(r,u,1)=u(r+1)$ (Theorem \ref{thm:ru1}). In Section~\ref{8},  we show that generalized Moore geometries are optimal hypergraphs in regard to the problem (2). This generalizes the result from \cite{N15}.

\section{Hypergraphs and bipartite biregular graphs} \label{2}

The statements in this section are well known (see for example \cite{LS96}). Let $B=(V_1,V_2,E)$ be a bipartite graph, where $V_1$ and $V_2$ are the color classes or partite sets of $B$. The bipartite graph $B$ is an {\it $(r,u)$-biregular graph} if each vertex in $V_1$ has degree $r$ and each vertex in $V_2$ has degree $u$. Let $\A(B)$ be the adjacency matrix of $B$, that is indexed by $V_1 \cup V_2$ with $(x,y)$-entry 1 if $\{ x,y \} \in E$ and 0 otherwise. The eigenvalues of $\A(B)$ are called the {\it eigenvalues} of $B$. 

Let $H=(V,E)$ be a hypergraph. For $x_0,\ldots,x_p \in V$ and $e_1,\ldots,e_p \in E$, a sequence \[w_p=(x_0,e_1,x_1,e_2,x_2, \ldots,e_p,x_p)\] is a {\it walk} of length $p$ from $x_0$ to $x_p$ if $x_0 \in e_1$, $x_p \in e_p$, $x_{i} \in e_{i} \cap e_{i+1}$ for each $i \in \{1,\ldots, p-1\}$, and $x_{i} \ne x_{i+1}$ for each $i \in \{0,\ldots, p-1\}$.  A subsequence $(x_{i},e_{i},x_{i+1},e_{i+1},x_{i+2})$ of a walk $w_p$  is {\it backtracking} if $e_{i}=e_{i+1}$. A walk $w_p$ is {\it non-backtracking} if $w_p$ has no backtracking or $p=1$. A walk from $x$ to $y$ is called a {\it Berge cycle} if $x=y$ and all its edges and vertices are distinct (with the exception of $x=y$). Note that for hypergraphs, there are other notions of cycles (see \cite{KMV} for example), but we will only consider Berge cycles in our paper. The minimum length of a cycle in $H$ is called the {\it girth} of $H$.  A hypergraph $H$ is {\it connected} if there exists a walk from $x$ to $y$ for any $x,y \in V$. When $H$ is connected, the {\it distance} between two vertices $x$ and $y$ is the minimum length of a walk from $x$ to $y$. The {\it diameter} of $H$ is the maximum distance between two vertices of $H$.  

Let $\A$ be the adjacency matrix of $H$. This is the matrix whose rows and columns are indexed by the vertex set $V$, where $\A_{xy}$ equals the number of edges of $H$ containing $x$ and $y$ if $x\neq y\in V$, and is $0$, when $x=y$. Parallel edges are not allowed in hypergraphs, but when $u\geq 3$, we may have two distinct vertices $x\neq y$ being contained in two different edges of $H$. In that case, $\A_{xy}>1$ and this means that the girth of $H$ is 2.

Let $\Gamma_H$ denote the multigraph whose vertex set is $V$, and the number of edges between $x$ and $y$ equals $\A_{xy}$.  The graph $\Gamma_H$ is called the {\it point graph} or {\it point multigraph} of $H$. The {\it incidence graph} or {\it Levi graph} of $H$ is the bipartite graph $(V,E,E')$, where $E'=\{ \{x,e\}  \colon\,  x \in V, e \in E, x \in e\}$. 
The {\it dual} hypergraph $H^*$ of $H$ is the hypergraph $(V^*,E^*)$, 
where $V^*=E$ and $E^*=\{ \{e \in E \colon\, x \in e \} \colon\, x \in V\}$.  

Let $H$ be an $r$-regular $u$-uniform hypergraph. 
The dual hypergraph $H^*$ is a $u$-regular $r$-uniform hypergraph. 
Let $B$ be the incidence graph of $H$, which is isomorphic to that of $H^*$. 
The bipartite graph $B$ is an $(r,u)$-biregular graph. 
We can similarly obtain a regular uniform hypergraph from a biregular graph. 
Let $\A(H)$, $\A(H^*)$ and $\A(B)$ be the adjacency matrices of 
$H$, $H^*$ and $B$, respectively. 
They are related as follows.
\[
\A(B)=\begin{pmatrix}
O & \boldsymbol{M} \\
{}^t\boldsymbol{M} & O 
\end{pmatrix},
\]
\begin{equation} \label{eq:semi-reg}
\A(B)^2=\begin{pmatrix}
\boldsymbol{M}{{}^t\boldsymbol{M}} &O \\
O & {{}^t\boldsymbol{M}}  \boldsymbol{M}
\end{pmatrix}
=
\begin{pmatrix}
\A(H) +r \I_V & O \\
O & \A(H^*) + u \I_E
\end{pmatrix},
\end{equation}
 where $\I_V$ is the identity matrix of size $|V|$. 
Let $P(x)$, $P^*(x)$ and $Q(x)$ be the characteristic polynomials of 
$\A(H)$, $\A(H^*)$ and $\A(B)^2$, respectively. It follows that $Q(x)=P(x-r)P^*(x-u)$
and $x^{|V|} P^*(x-u)=x^{|E|} P(x-r)$. This implies that the eigenvalues of $B$ are the square roots of the eigenvalues of $\A(H)$ plus $r$, and that of $\A(H^*)$ plus $u$. There are {\it obvious} eigenvalues $0$ of $B$ with multiplicity $\left| |V|-|E| \right|$. 
Hence, the spectrum of one of the three graphs $B$, $H$ and $H^*$ yields that of the other two.

\section{Infinite distance-regular graphs}  \label{3}

Let $V$ be a countable set. A simple graph $G = (V, E)$ is {\it locally finite} if the degree of each vertex is finite and is {\it connected} if between any two distinct vertices, there is at least one walk. The graphs considered in this section are infinite, locally finite and connected. Let $d(x,y)$ denote the distance from $x \in V$ to $y \in V$. Let $R_i=\{ (x,y) \in V\times V \colon\, d(x,y)=i\}$. 
The {\it $i$-th distance matrix} $\A_i$ of $G$ is the matrix indexed by $V$ whose $(x,y)$-entry is 
$1$ if $(x,y) \in R_i$, and 0 otherwise.  
The matrix $\A_1$ is the adjacency matrix since the graph is simple. 
A locally finite graph $G = (V, E)$ is a {\it distance-regular graph} if 
for any non-negative integers $i,j,k$, 
the number $p_{ij}^k=|\{z \in V \colon\, (x,z) \in R_i, (z,y) \in R_j \}|$ is independent of the choice of $(x,y) \in R_k$. 
The {\it intersection array} of a distance-regular graph is defined to be
\[
\begin{pmatrix}
*   & c_1 &c_2 & \cdots \\
a_0 & a_1 & a_2 & \cdots \\
b_0 & b_1 & b_2 & \cdots
\end{pmatrix}, 
\]
where $a_i=p_{1,i}^i$, $b_i=p_{1,i+1}^i$, $c_i=p_{1,i-1}^i$, and $\ast$ means the undefined entry $c_0$.  
Let $v_i(x)$ be the polynomial of degree $i$ defined recursively by 
\[
v_0(x)=1, \qquad v_1(x)=x, 
\]
\[
c_{i+1} v_{i+1}(x) =(x-a_i) v_i(x)-b_{i-1}v_{i-1}(x)\quad (i\geq 1).
\]
The matrix $\A_i$ satisfies $\A_i=v_i(\A_1)$ \cite[Section 4.1]{BCNb} (see also \cite{DKT16}). 

Let $s=u-1$ and $t=r-1$.
The intersection array of any infinite distance-regular graph is of the form 
\[
\begin{pmatrix}
*   & 1 &1 & \cdots \\
0 & s-1 & s-1 & \cdots \\
s(t+1) & st & st & \cdots
\end{pmatrix} 
\]
for two integers $s, t \geq 1$ (see \cite{I83}).   
There exists a unique distance-regular graph $D_{r,u}$ with the above intersection array. 
Let $T_{r,u}$ be the infinite $(r,u)$-biregular tree. 
The vertex set of $D_{r,u}$ is the partite set consisting of the vertices of degree $r$ of $T_{r,u}$, and 
two vertices $x,y$ are adjacent if their distance $d(x,y)$ in $T_{r,u}$ is 2. 
Each vertex of $D_{r,u}$ lies in the intersection of exactly 
$r$ copies of the complete graphs of order $u$. 
Note that $D_{r,u}$ is vertex-transitive, and $D_{r,2}$ is the homogeneous $r$-regular tree. 

Let $F^{(r,u)}_i(x)$ denote the polynomial of degree $i$ as $v_i(x)$ of $D_{r,u}$, 
that is defined by 
\[
F^{(r,u)}_0(x)=1, \qquad F^{(r,u)}_1(x)=x, \qquad F^{(r,u)}_2(x)=x^2-(u-2)x-r(u-1)
\]
and 
\begin{equation} \label{eq:3-term}
F^{(r,u)}_{i+1}(x)=(x-u+2) F^{(r,u)}_i(x)- (r-1)(u-1) F^{(r,u)}_{i-1}(x). 
\end{equation}
The connections between these polynomials and graphs and hypergraphs have been studied by various authors (see \cite{MW89,LS96,CX15} for example). For $q=(r-1)(u-1)$ and $k=r(u-1)$, 
one has $F^{(r,u)}_i(k)=kq^{i-1}$ for $i \geq 1$, and $F^{(r,u)}_0(k)=1$. 
Let $\delta_I(x)$ be the function defined by $\delta_I(x)=1$ if $x \in I$, and $\delta_I(x)=0$ otherwise. Let $
I_{r,u}=[u-2-2 \sqrt{q}, u-2+2\sqrt{q}]$. It is known (see \cite{MW89}) that the polynomials $F^{(r,u)}_i(x)$ form a sequence of monic orthogonal polynomials with respect to the non-negative weight
\begin{equation} \label{eq:weight1}
w(x)=
\frac{r\sqrt{4 q- (x-u+2)^2}}{2\pi (k-x)(r+x)} \delta_{I_{r,u}}
\end{equation}
if $r\geq u$, or with respect to 
\begin{equation} \label{eq:weight2}
w(x)=\frac{r\sqrt{4 q- (x-u+2)^2}}{2 \pi (k-x)(r+x)}\delta_{I_{r,u}} +\frac{u-r}{u} \delta_{\{-r\}}(x) 
\end{equation}
if $r< u$. 

Let $G_i^{(r,u)}(x)=\sum_{j=0}^i F^{(r,u)}_j(x)$. 
By the three-term recurrence relation \eqref{eq:3-term}, 
it follows that
\[
G_i^{(r,u)}(x) =\frac{F_{i+1}^{(r,u)}(x)-q F_{i}^{(r,u)}(x)}{x-k}. 
\]
The polynomials $G_i^{(r,u)}(x)$ form a sequence of orthogonal 
polynomials  with respect to the non-negative weight 
\[
(k-x) w(x),
\]
where $w(x)$ is defined in equations $\eqref{eq:weight1}$ and 
$\eqref{eq:weight2}$ (see \cite[Lemmas 3.3, 3.5]{CK07}).   

Since $D_{r,u}$ is distance-regular, $\A_i=F^{(r,u)}_i(\A_1)$ for the $i$-th distance matrix $\A_i$ of $D_{r,u}$. 
The spectrum of $\A_1$ is $I_{r,u}$ if $r\geq u$,  
and $I_{r,u} \cup \{-r\}$ if $r<u$ (see \cite{MW89}). The following result will be used later to prove Theorem \ref{thm:largest_graph}.
\begin{theorem} \label{thm:coef_posi}
Suppose $r>2$. Let $p_l(i,j)$ be the coefficients of $F^{(r,u)}_i(x)F^{(r,u)}_j(x)$ in terms of $F^{(r,u)}_l(x)$, namely 
$$F^{(r,u)}_i(x)F^{(r,u)}_j(x)=
\sum_{l=0}^{i+j}p_l(i,j)F^{(r,u)}_l(x).$$
Then $p_l(i,j)\geq 0$ for any $l,i,j$ and $p_0(i,j)=F^{(r,u)}_i(k)\delta_{ij}$, where $\delta$ is Kronecker's delta and $k=r(u-1)$. 
Moreover $p_l(i,j) > 0$ if and only if 
 $|i-j|\leq l \leq i+j$ for $u>2$, and $p_l(i,j) > 0$ if and only if $|i-j|\leq l \leq i+j$, 
and $l \equiv i+j \pmod{2}$ for $u=2$. 
\end{theorem}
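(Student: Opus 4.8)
The plan is to identify the linearization coefficients $p_l(i,j)$ with the intersection numbers $p_{ij}^l$ of the infinite distance-regular graph $D_{r,u}$, and then to read off all three assertions from the combinatorial meaning of these numbers. Recall that $\A_i=F^{(r,u)}_i(\A_1)$, where $\A_1,\A_2,\dots$ are the distance matrices of $D_{r,u}$. Computing the $(x,y)$-entry of $\A_i\A_j$ gives $|\{z : d(x,z)=i,\ d(z,y)=j\}| = p_{ij}^{d(x,y)}$ by distance-regularity, so $\A_i\A_j = \sum_l p_{ij}^l\,\A_l$, and the $\A_l$ are linearly independent because they have pairwise disjoint supports. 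Substituting $\A_l=F^{(r,u)}_l(\A_1)$ yields $(F^{(r,u)}_iF^{(r,u)}_j)(\A_1)=\sum_l p_{ij}^l F^{(r,u)}_l(\A_1)$. Since the $F^{(r,u)}_l$ form a polynomial basis of $\R[x]$ while the matrices $F^{(r,u)}_l(\A_1)=\A_l$ are linearly independent, the evaluation $p(x)\mapsto p(\A_1)$ is injective on $\R[x]$; hence $F^{(r,u)}_iF^{(r,u)}_j=\sum_l p_{ij}^l F^{(r,u)}_l$ and $p_l(i,j)=p_{ij}^l$.

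With this identification the first two assertions are essentially immediate. Nonnegativity holds because $p_{ij}^l$ counts vertices. For the $l=0$ claim, $p_{ij}^0\neq 0$ forces $x=y$ and therefore $i=j$, and then $p_{ii}^0=k_i$, the number of vertices at distance $i$ from a fixed vertex. Since $\A_1$ has constant row sum $k=r(u-1)$ and taking row sums is a homomorphism on constant-row-sum matrices, $k_i$ equals the row sum of $F^{(r,u)}_i(\A_1)$, namely $F^{(r,u)}_i(k)$; this gives $p_0(i,j)=F^{(r,u)}_i(k)\delta_{ij}$.

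For the positivity characterization I would pass to the biregular tree $T_{r,u}$, using that the vertices of $D_{r,u}$ are the degree-$r$ vertices of $T_{r,u}$ and that the $D_{r,u}$-distance between two of them is half their $T_{r,u}$-distance. Necessity of $|i-j|\le l\le i+j$ is the triangle inequality, and for $u=2$, where $D_{r,2}$ is the bipartite $r$-regular tree, bipartiteness forces $i+j\equiv l\pmod 2$. For sufficiency, fix $x,y$ with $d(x,y)=l$, let $w$ be the vertex on the $x$--$y$ geodesic at $T_{r,u}$-distance $\alpha=l+i-j$ from $x$, and construct $z$ by leaving the geodesic at $w$ along a non-backtracking path of $T_{r,u}$-length $\beta=i+j-l$; one checks $\alpha\in[0,2l]$ and $\beta\ge 0$ are exactly the range conditions and that $d(x,z)=i$, $d(z,y)=j$. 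The decisive point is whether such a branch can start at $w$ and terminate at a degree-$r$ vertex: when $u>2$ the vertex $w$ always has a free edge off the geodesic (its degree is $r>2$ or $u\ge 3$), and a path of length $\beta$ lands on a degree-$r$ vertex whatever the parity of $\alpha$, so no parity restriction survives; when $u=2$ a branch can only leave from a degree-$r$ vertex, i.e.\ when $\alpha$ is even, which is precisely the surviving condition $i+j\equiv l\pmod 2$.

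I expect the main obstacle to be this last sufficiency argument: organizing the boundary cases ($\beta=0$, or $w\in\{x,y\}$), verifying that distances add along the chosen tree path so that no shortcut spoils $d(x,z)$ or $d(z,y)$, and cleanly isolating where the hypothesis $r>2$ enters. Concretely, $r>2$ is what guarantees a branching direction at degree-$r$ vertices (in particular at the endpoints $x$ and $y$), and it is exactly this that makes the $u=2$ versus $u>2$ dichotomy in the parity of $\alpha$ come out as stated.
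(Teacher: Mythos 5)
Your proof is correct, but it takes a genuinely different route from the paper. The paper argues algebraically, by induction on $i$ via the three-term recurrence \eqref{eq:3-term} (modifying Theorem 5.2 of Askey), using the orthogonality inner product to pin down $p_0(i,j)$ and the single coefficient not controlled by the induction; the factor $(r-2)(u-1)q^{i-1}$ that appears there is exactly where $r>2$ enters. You instead identify $p_l(i,j)$ with the intersection numbers $p_{ij}^l$ of the infinite distance-regular graph $D_{r,u}$ of Section~\ref{3}, via $\A_i=F_i^{(r,u)}(\A_1)$ and the linear independence of the distance matrices, and then read everything off combinatorially: nonnegativity is counting, $p_0(i,j)=k_i\delta_{ij}=F_i^{(r,u)}(k)\delta_{ij}$ is the valency of $R_i$, and the positivity characterization follows from an explicit construction of a witness $z$ in the biregular tree $T_{r,u}$. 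Your tree construction is sound: with $d_T(x,y)=2l$, the median $w$ sits at $T$-distance $\alpha=l+i-j$ from $x$ and the branch has length $\beta=i+j-l$, the range conditions are exactly $|i-j|\le l\le i+j$, concatenated non-backtracking paths in a tree are geodesics so the distances add, and the only obstruction to branching is an interior $w$ of degree $2$, which occurs precisely when $u=2$ and $\alpha$ is odd, i.e.\ $l\not\equiv i+j\pmod 2$; for $u>2$ every interior vertex has degree $\ge 3$ (using $r>2$ at the degree-$r$ positions), so no parity restriction survives. What your approach buys is a transparent conceptual explanation of the parity dichotomy and of nonnegativity; what it costs is reliance on the unproved-in-this-paper facts that $D_{r,u}$ is distance-regular with the stated array and $\A_i=v_i(\A_1)$ (cited from \cite{I83} and \cite{BCNb}), whereas the paper's inductive computation is self-contained and also produces explicit values of some coefficients.
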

\begin{proof}

We modify the proof of Theorem 5.2 in \cite{Ab}, that uses induction on $i$. We may suppose $i\leq j$. The condition $|i-j|\leq l \leq i+j$ is necessary for $p_l(i,j) > 0$ because $\langle F_i^{(r,u)}(x)F_j^{(r,u)}(x), F_l^{(r,u)}(x) \rangle=\langle F_j^{(r,u)}(x),F_i^{(r,u)}(x)F_l^{(r,u)}(x)\rangle=0$ for $l+i<j$, where $\langle,\rangle$ is the inner product of the orthogonal polynomials. 

The equation $p_0(i,j)=F^{(r,u)}_i(k)\delta_{ij}$ is clear
because $$ F^{(r,u)}_i(k)\delta_{ij}=kq^{i-1}\delta_{ij}=\langle F_i^{(r,u)}(x),F_j^{(r,u)}(x) \rangle=\langle F_i^{(r,u)}(x)F_j^{(r,u)}(x),1 \rangle=p_0(i,j).$$
We can prove the assertion follows for $i=1,2$ and $j\geq 1$ by direct calculation. Assume $F^{(r,u)}_i(x)F^{(r,u)}_j(x)$ satisfies the assertion for each $j \geq i$. 
It follows that for $3 \leq i+1\leq j$, 
\begin{align} \label{eq:2.1}
F^{(r,u)}_{i+1}(x)F^{(r,u)}_j(x)&=\left((x-u+2)F^{(r,u)}_i(x)-qF^{(r,u)}_{i-1}(x)\right)F^{(r,u)}_j(x)  \nonumber \\ \nonumber
&=F^{(r,u)}_i(x)\left(F^{(r,u)}_{j+1}(x)+(u-2)F^{(r,u)}_j(x)+qF^{(r,u)}_{j-1}(x)\right)\\ \nonumber
&\qquad -(u-2)F^{(r,u)}_i(x)F^{(r,u)}_j(x)-qF^{(r,u)}_{i-1}(x)F^{(r,u)}_j(x)\\
&=F^{(r,u)}_i(x)F^{(r,u)}_{j+1}(x)+q\left(F^{(r,u)}_i(x)F^{(r,u)}_{j-1}(x)-
F^{(r,u)}_{i-1}(x)F^{(r,u)}_j(x) \right).
\end{align}
From equation \eqref{eq:2.1}, 
 it follows that 
\begin{align*}
F^{(r,u)}_{i+1}(x)F^{(r,u)}_{j}(x)-
F^{(r,u)}_{i}(x)&F^{(r,u)}_{j+1}(x)
=q \left(F^{(r,u)}_{i}(x)F^{(r,u)}_{j-1}(x)-
F^{(r,u)}_{i-1}(x)F^{(r,u)}_{j}(x) \right)\\
&=q^2 \left(F^{(r,u)}_{i-1}(x)F^{(r,u)}_{j-2}(x)-
F^{(r,u)}_{i-2}(x)F^{(r,u)}_{j-1}(x)\right)\\
&=q^{i-1}\left(F^{(r,u)}_{2}(x)F^{(r,u)}_{j-i+1}(x)-
F^{(r,u)}_{1}(x)F^{(r,u)}_{j-i+2}(x)\right)\\
&=q^{i-1}\left(-(u-1)F^{(r,u)}_{j-i+1}(x)+q(u-2)F^{(r,u)}_{j-i}(x)+q^2F^{(r,u)}_{j-i-1}(x) \right). 
\end{align*} 
Thus,  $F^{(r,u)}_{i+1}(x)F^{(r,u)}_{j}(x) $ satisfies the assertion  from the induction assumption except for the coefficient of the term $F^{(r,u)}_{j-i+1}(x)$.  
Let $c$ be the coefficient of the term $F^{(r,u)}_{j-i+1}(x)$ in $F^{(r,u)}_{i}(x)F^{(r,u)}_{j+1}(x)$. 
From the equations
\[
c k q^{j-i} =\langle F^{(r,u)}_{i}(x)F^{(r,u)}_{j+1}(x), F^{(r,u)}_{j-i+1}(x) \rangle =
\langle F^{(r,u)}_{i}(x)F^{(r,u)}_{j-i+1}(x),F^{(r,u)}_{j+1}(x) \rangle= k q^j, 
\]
we can determine $c=q^i$. 
Therefore the coefficient of the term $F^{(r,u)}_{j-i+1}(x)$ in  $F^{(r,u)}_{i+1}(x)F^{(r,u)}_{j}(x) $  is
equal to $c-q^{i-1}(u-1)=(r-2)(u-1) q^{i-1}>0$.  Thus the theorem follows. 
\end{proof}

\section{LP bound for regular uniform hypergraphs } \label{4}

In this section, we obtain an upper bound on the order of an $r$-regular $u$-uniform hypergraph with  given  distinct eigenvalues (Theorem \ref{thm:lp_bound}) which we call the (hypergraph) linear programming bound (LP bound). 

When $u\geq 3$, note that a non-backtracking walk in a hypergraph $H$ is not the same as a non-backtracking walk in its associated multigraph $\Gamma_H$. When $u=3$ for example, consider an edge $e_1=\{x_0,x_1,x_2\}$ of a $3$-uniform hypergraph $H$. The walk $(x_0,e_1,x_1,e_1,x_2)$ is a walk with backtracking in the hypergraph $H$ while the same walk $(x_0,x_1,x_2)$ is a non-backtracking walk in the multigraph $\Gamma_H$. Therefore, when counting non-backtracking walks in $u$-uniform hypergraphs with $u\geq 3$, our following theorem is a new result that does not follow by simply applying the graph theoretic results on the non-backtracking walks in $\Gamma_H$.

\begin{theorem} \label{thm:num_non-back}
If $H$ is a connected $r$-regular $u$-uniform hypergraph 
with adjacency matrix $\A$, 
then the $(x,y)$-entry of $F_i^{(r,u)}(\A)$ is the number of 
non-backtracking walks of length $i$ from $x$ to $y$. 
\end{theorem}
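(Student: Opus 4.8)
The plan is to compare the combinatorial recurrence satisfied by non-backtracking walk counts with the three-term recurrence \eqref{eq:3-term} defining $F_i^{(r,u)}$. Write $N_i$ for the matrix indexed by $V$ whose $(x,y)$-entry $N_i(x,y)$ is the number of non-backtracking walks of length $i$ from $x$ to $y$, so that the claim is $N_i = F_i^{(r,u)}(\A)$ for all $i\geq 0$. I would prove this by induction on $i$, first pinning down the three matrices $N_0,N_1,N_2$ by direct counting and then establishing the operator recurrence $N_{i+1} = (\A-(u-2)\I)N_i - (r-1)(u-1)N_{i-1}$ for $i\geq 2$, which is exactly \eqref{eq:3-term} evaluated at $\A$ since $x-u+2 = x-(u-2)$.

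For the base cases, $N_0=\I$ and $N_1=\A$ are immediate from the definition of a walk (a length-$1$ walk from $x$ to $y$ is just an edge through the distinct vertices $x,y$, counted by $\A_{x,y}$). The case $N_2 = F_2^{(r,u)}(\A)$ must be treated separately, because \eqref{eq:3-term} only produces $F_{i+1}$ from $F_i,F_{i-1}$ for $i\geq 2$; applying it naively at $i=1$ would yield the constant term $-(r-1)(u-1)$ in place of the correct $-r(u-1)$ of $F_2^{(r,u)}$. For the off-diagonal entries I would subtract from $(\A^2)_{x,y}$ the length-$2$ walks that reuse a single edge, of which there are $(u-2)\A_{x,y}$; for the diagonal I would use the regularity and uniformity identity $\sum_z \A_{x,z} = r(u-1) = k$ to obtain $N_2(x,x) = (\A^2)_{x,x} - r(u-1)$, matching the constant term of $F_2^{(r,u)}$.

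The heart of the argument is the inductive step, where I would expand $(\A N_i)_{x,y}$ as the number of length-$(i+1)$ walks obtained by prepending an edge $e_1$ through $x$ to a non-backtracking walk of length $i$; such a walk is itself non-backtracking precisely when its first two edges $e_1,e_2$ differ. The walks with $e_1\neq e_2$ contribute exactly $N_{i+1}(x,y)$, so it remains to show the \emph{first-junction backtracking} walks (those with $e_1=e_2$) contribute $(u-2)N_i(x,y) + (r-1)(u-1)N_{i-1}(x,y)$. This is the main obstacle, and I expect to resolve it by a further split on the third vertex $x_2$. When $x_2=x_0$, deleting the initial detour $x_0\to x_1\to x_0$ through $e_1$ leaves a non-backtracking walk of length $i-1$ whose first edge differs from $e_1$; summing over the $r$ edges $e_1$ through $x_0$ and the $u-1$ choices of the intermediate vertex $x_1$, the ``differs from $e_1$'' constraint makes the first-edge sum telescope as $(u-1)(r N_{i-1}(x_0,y) - N_{i-1}(x_0,y)) = (r-1)(u-1)N_{i-1}(x,y)$. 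When $x_2\neq x_0$, merging the two $e_1$-steps into a single step $x_0\to x_2$ produces a non-backtracking walk of length $i$, and each such walk arises from exactly the $u-2$ choices of $x_1\in e_1\setminus\{x_0,x_2\}$, giving $(u-2)N_i(x,y)$. Assembling these contributions yields the recurrence and closes the induction.
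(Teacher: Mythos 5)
Your proposal is correct and follows essentially the same route as the paper: induction on $i$ with the cases $i\leq 2$ handled by direct counting, and the inductive step obtained by expanding $(\A N_i)_{x,y}$ and showing that the prepended walks which backtrack at the first junction contribute exactly $(u-2)N_i(x,y)+(r-1)(u-1)N_{i-1}(x,y)$. Your split on whether the third vertex $x_2$ equals the starting vertex is a slightly cleaner formulation of the paper's case distinction (phrased there as $x\in e_3$ versus $x\notin e_3$), but the counts and the overall argument are identical.
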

\begin{proof}
We use induction on $i$. 
Let $b_{xy}^{(i)}$ be the number of non-backtracking walks of length $i$ from $x$ to $y$. 
Let $f_{xy}^{(i)}$ be the $(x,y)$-entry of $F_i^{(r,u)}(\A)$. 
For $i=1$, the assertion is trivial. 
Next we consider the case $i=2$. The $(x,y)$-entry $a_{xy}^{(2)}$ of $\A^2$ is the number of walks of length 2 from $x$ to $y$. 
A walk of length 2 from $x$ that has backtracking is of the form $(x,e,x_1,e,y)$. 
For $x=y$,  the number of walks $(x,e,x_1,e,x)$ is equal to $r(u-1)$ for each $x\in V$,  
because one has $r$ choices of $e$ and $u-1$ choices of $x_1$ in $e$. 
For $x \ne y$, the number of  walks $(x,e,x_1,e,y)$ is equal to $(u-2)\A_{xy}$ for any $x,y \in V$, because one has $\A_{xy}$ choices of $e$ ($x,y \in e$) and $u-2$ choices of $x_1$ $(x_1\in e, x_1\ne x,y)$.  
The assertion therefore follows from $b_{xy}^{(2)}=a_{xy}^{(2)}-(u-2)\A_{xy}-r(u-1)\delta_{xy}=f_{xy}^{(2)}$, where
$\delta$ is the Kronecker delta. 

Suppose $i\geq 2$ and $b_{xy}^{(j)}=f_{xy}^{(j)}$ for each 
$j \in \{1,\ldots, i\}$.  
Because $$F^{(r,u)}_{i+1}(\A)=\big(\A-(u-2)\I \big) F^{(r,u)}_i(\A)- (r-1)(u-1) F^{(r,u)}_{i-1}(\A),$$ we have that
\begin{align*}
f_{xy}^{(i+1)}&=\sum_{z \in V} (f_{xz}^{(1)} -(u-2)\delta_{xz})f_{zy}^{(i)}-(r-1)(u-1)f_{xy}^{(i-1)}\\
&= \sum_{z \in V} b_{xz}^{(1)}b_{zy}^{(i)} -\sum_{z \in V}(u-2)\delta_{xz}b_{zy}^{(i)}-(r-1)(u-1)b_{xy}^{(i-1)}\\
&= \sum_{z \in V} b_{xz}^{(1)}b_{zy}^{(i)} -(u-2)b_{xy}^{(i)}-(r-1)(u-1)b_{xy}^{(i-1)}
\end{align*}
The value $\sum_{z \in V} b_{xz}^{(1)}b_{zy}^{(i)}$ is the number of 
walks $(x,e_1,z,e_2,x_2,\ldots, e_{i+1},y)$ such that  the subsequence $(z,e_2,x_2, \ldots, e_{i+1},y)$ is a non-backtracking walk. We count the number of backtracking walks there.  
If the walk $(x,e_1,z,e_2,x_2,\ldots, e_{i+1},y)$ has backtracking, then $e_1=e_2$ holds.  
Suppose $x \in e_3$. 
For each non-backtracking walk $(x,e_3,x_3\ldots, e_{i+1},y)$, the number of choices of $(z,e_2)$ is $(r-1)(u-1)$
because one has $r-1$ choices of $e_2$ $(e_2\ne e_3)$ and $u-1$ choices 
of $z$ $(z \ne x)$ in $e_2$. This case corresponds to $(r-1)(u-1)b_{xy}^{(i-1)}$.  
Suppose $x \not\in e_3$. 
For each non-backtracking walk $(x,e_2,x_2,\ldots, e_{i+1},y)$, 
one has $u-2$ choices of $z$ $(z\ne x,x_2)$ such that 
$(x,e_2,z,e_2,x_2,\ldots, e_{i+1},y)$ is non-backtracking.   
 This case corresponds to $ (u-2)b_{xy}^{(i)}$.
  Therefore $b_{xy}^{(i+1)}=f_{xy}^{(i+1)}$ follows. 
\end{proof}
The following corollaries are immediate.
\begin{corollary} \label{coro:non-negativeA}
If $H$ is a connected $r$-regular $u$-uniform hypergraph 
with adjacency matrix $\A$, 
then each $(x,y)$-entry of $F_i^{(r,u)}(\A)$ is non-negative for each $i \geq 0$. 
\end{corollary}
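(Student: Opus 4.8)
The plan is to read the result off directly from Theorem~\ref{thm:num_non-back}. For $i \geq 1$, that theorem identifies the $(x,y)$-entry of $F_i^{(r,u)}(\A)$ with $b_{xy}^{(i)}$, the number of non-backtracking walks of length $i$ from $x$ to $y$ in $H$. Since $b_{xy}^{(i)}$ is the cardinality of a set of combinatorial objects, it is a non-negative integer, and therefore every entry of $F_i^{(r,u)}(\A)$ is non-negative. The only remaining index is $i=0$, where $F_0^{(r,u)}(x)=1$ gives $F_0^{(r,u)}(\A)=\I$, whose entries are $0$ or $1$ and hence non-negative. This exhausts all $i \geq 0$ and gives the claim.

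There is essentially no obstacle: the entire content of the corollary is the non-negativity of a counting function, which is automatic once the combinatorial interpretation furnished by Theorem~\ref{thm:num_non-back} is available. The only points worth a moment's care are the boundary indices. For $i=0$ the combinatorial count is simply $\delta_{xy}$ (the single vertex walk), consistent with $F_0^{(r,u)}(\A)=\I$; for $i=1$ one has $F_1^{(r,u)}(\A)=\A$, whose entries $\A_{xy}$ are non-negative integers by the definition of the adjacency matrix. Neither requires any additional argument, and no properties of the orthogonal polynomials $F_i^{(r,u)}$ beyond Theorem~\ref{thm:num_non-back} are needed. Note also that connectivity of $H$, while assumed in the hypotheses, is not actually used for the non-negativity conclusion itself.
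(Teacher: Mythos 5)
Your proposal is correct and matches the paper's approach: the paper states this corollary as immediate from Theorem~\ref{thm:num_non-back}, since a count of non-backtracking walks is automatically non-negative. Your extra care with the boundary cases $i=0,1$ is harmless and consistent with the theorem.
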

\begin{corollary}
If $H$ is a connected $r$-regular $u$-uniform hypergraph 
with adjacency matrix $\A$, 
then the following are equivalent. 
\begin{enumerate}
\item ${\rm tr}(F_i^{(r,u)}(\A))=0$ for each $i$ with $1 \leq i \leq g-1$, and ${\rm tr}(F_g^{(r,u)}(\A)) \ne 0$. 
\item The girth of $H$ is $g$. 
\end{enumerate}
\end{corollary}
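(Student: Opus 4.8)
The plan is to reduce the corollary to a single combinatorial fact: the minimum length of a closed non-backtracking walk in $H$ equals the girth $g$. By Theorem \ref{thm:num_non-back}, the diagonal entry $[F_i^{(r,u)}(\A)]_{xx}$ equals $b^{(i)}_{xx}$, the number of non-backtracking walks of length $i$ from $x$ to itself, so $\tr(F_i^{(r,u)}(\A))=\sum_{x\in V} b^{(i)}_{xx}$ is a sum of non-negative integers. Hence this trace vanishes exactly when no closed non-backtracking walk of length $i$ exists, and is strictly positive as soon as one does. Writing $g^\ast := \min\{\, i\geq 1 : b^{(i)}_{xx}>0 \text{ for some } x\in V \,\}$, condition $(1)$ says precisely that the first nonzero trace occurs at index $g$, i.e. $g=g^\ast$, while $(2)$ says $g=\mathrm{girth}$. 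So the whole statement follows once I prove $g^\ast=\mathrm{girth}$.

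The inequality $g^\ast\leq\mathrm{girth}$ is immediate: a Berge cycle $(x_0,e_1,x_1,\dots,e_g,x_0)$ of minimum length $g$ has pairwise distinct edges, so no two consecutive edges coincide, making it a closed non-backtracking walk of length $g$; thus $b^{(g)}_{x_0x_0}\geq 1$. For the reverse inequality I would take a shortest closed non-backtracking walk $w=(x_0,e_1,x_1,\dots,e_m,x_m)$ with $x_m=x_0$ and $m=g^\ast$, and show it is a Berge cycle, whence $\mathrm{girth}\leq m$. First, the vertices $x_0,\dots,x_{m-1}$ are pairwise distinct: a repetition $x_a=x_b$ with $a<b$ would make the contiguous sub-walk from $x_a$ to $x_b$ a strictly shorter closed walk, still non-backtracking since that condition involves only consecutive edges and is inherited by contiguous sub-walks, contradicting minimality.

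The delicate step, and the one I expect to be the main obstacle, is proving the edges $e_1,\dots,e_m$ pairwise distinct, because the natural shortcut across a repeated edge risks creating a new backtracking at the splice point. Given $e_a=e_b=:e$ with $a<b$, non-backtracking forces $b\geq a+2$, and both $x_{a-1}$ and $x_b$ lie in $e$; I would splice to form $w''=(x_0,e_1,\dots,e_{a-1},x_{a-1},e,x_b,e_{b+1},\dots,e_m,x_0)$, of length $m-(b-a)<m$. The two spliced pairs are non-backtracking precisely because $e_{a-1}\neq e_a=e$ and $e=e_b\neq e_{b+1}$ hold in $w$. The degenerate case $a=1$ and $b=m$ (where an endpoint of the splice would collapse onto $x_0$) I would treat by the symmetric shortcut $(x_1,e_2,\dots,e_{m-1},x_{m-1},e,x_1)$, again strictly shorter and non-backtracking since $e_{m-1}\neq e_m=e$. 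Either way minimality is contradicted, so all edges are distinct and $w$ is a Berge cycle.

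Combining the two inequalities gives $g^\ast=\mathrm{girth}$. Substituting into the trace computation yields $\tr(F_i^{(r,u)}(\A))=0$ for $1\leq i\leq \mathrm{girth}-1$ and $\tr(F_{\mathrm{girth}}^{(r,u)}(\A))>0$, which is exactly the asserted equivalence of $(1)$ and $(2)$. Everything apart from the vertex-distinctness reduction and the non-backtracking bookkeeping in the edge-shortcut is a direct consequence of Theorem \ref{thm:num_non-back}.
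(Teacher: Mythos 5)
Your proposal is correct, and it takes exactly the route the paper intends: the paper offers no written proof (it declares the corollary ``immediate'' from Theorem~\ref{thm:num_non-back}), and your argument supplies the missing details, namely that $\tr(F_i^{(r,u)}(\A))$ counts closed non-backtracking walks of length $i$ and that the shortest such walk is a Berge cycle. The only check you leave implicit is that the splice step $(x_{a-1},e,x_b)$ satisfies the walk condition $x_{a-1}\neq x_b$; this follows from the vertex-distinctness you established in the preceding paragraph, except in the degenerate case $a=1$, $b=m$, which you already treat separately.
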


The following result is called the LP bound for regular uniform hypergraphs. It generalizes previous works from \cite{CKNV16,CKN19}.
\begin{theorem} \label{thm:lp_bound}
Let $H$ be a connected $r$-regular $u$-uniform hypergraph 
on $v$ vertices. 
Let $\tau_0,\tau_1,\ldots, \tau_d$ 
be the distinct eigenvalues of $H$, where $\tau_0=r(u-1)=k$. 
If there exists a polynomial $f(x)=\sum_{i\geq 0} f_i F_i^{(r,u)}(x)$
such that $f(k)>0$, $f(\tau_j) \leq 0$ for each $j \in \{1,\ldots, d\}$, 
$f_0>0$, and $f_i\geq 0$ for each $i \in \{ 1, \ldots, \deg f\}$, then 
\[
v\leq \frac{f(k)}{f_0}. 
\]
Equality holds if and only if 
$f(\tau_j)=0$ for each $j \in \{1,\ldots,d\}$ and 
 $f_i\cdot{\rm tr}(F_i^{(r,u)}(\A))=0$ for each $i \in \{1,\ldots, \deg f\}$. 
\end{theorem}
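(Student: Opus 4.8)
The plan is to compute the trace of the matrix $f(\A)=\sum_{i\geq 0} f_i F_i^{(r,u)}(\A)$ in two different ways and compare the results. On the spectral side, since $H$ is connected the adjacency matrix $\A$ is a non-negative irreducible matrix, so by Perron--Frobenius the top eigenvalue $\tau_0=k$ is simple. Writing $m_j$ for the multiplicity of $\tau_j$, we then have $m_0=1$ and
\[
\tr(f(\A))=\sum_{j=0}^d m_j f(\tau_j)=f(k)+\sum_{j=1}^d m_j f(\tau_j).
\]
Because $f(\tau_j)\leq 0$ and $m_j>0$ for each $j\geq 1$, every term of the second sum is non-positive, giving the upper bound $\tr(f(\A))\leq f(k)$.

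On the combinatorial side I would expand the trace along the polynomial basis and use linearity of the trace together with $F_0^{(r,u)}=1$, so that $\tr(F_0^{(r,u)}(\A))=v$:
\[
\tr(f(\A))=\sum_{i\geq 0} f_i\,\tr\!\big(F_i^{(r,u)}(\A)\big)=f_0\,v+\sum_{i\geq 1} f_i\,\tr\!\big(F_i^{(r,u)}(\A)\big).
\]
By Theorem~\ref{thm:num_non-back} the diagonal entry of $F_i^{(r,u)}(\A)$ at a vertex $x$ equals the number of closed non-backtracking walks of length $i$ based at $x$, so $\tr(F_i^{(r,u)}(\A))\geq 0$ for every $i\geq 1$ (this is Corollary~\ref{coro:non-negativeA} read on the diagonal). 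Since also $f_i\geq 0$ for $i\geq 1$, each term of the remaining sum is non-negative, which yields the lower bound $\tr(f(\A))\geq f_0\,v$.

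Combining the two bounds gives $f_0\,v\leq \tr(f(\A))\leq f(k)$, and dividing by $f_0>0$ proves $v\leq f(k)/f_0$. For the equality statement I would observe that both chains of inequalities are sums of sign-definite terms, so that overall equality forces each term to vanish separately. The upper bound is tight precisely when $m_j f(\tau_j)=0$ for every $j\geq 1$; as $m_j>0$ this is equivalent to $f(\tau_j)=0$ for each $j\in\{1,\ldots,d\}$. The lower bound is tight precisely when $f_i\,\tr(F_i^{(r,u)}(\A))=0$ for every $i\in\{1,\ldots,\deg f\}$. Together these give exactly the stated equality condition.

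The argument is a standard trace sandwich, so I do not anticipate a serious obstacle; the genuinely hypergraph-specific input, namely the non-negativity of the closed non-backtracking walk counts that makes the $i\geq 1$ terms favourable, is already provided by Theorem~\ref{thm:num_non-back}. The only point demanding care is the simplicity of the eigenvalue $k$, which I would justify via the irreducibility of $\A$ coming from the connectedness of $H$, ensuring that the $f(k)$ term is correctly isolated with coefficient $m_0=1$ while all other spectral contributions carry the favourable sign.
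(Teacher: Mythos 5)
Your proposal is correct and follows essentially the same route as the paper: both bound $\tr(f(\A))$ from above by $f(k)$ using the spectral decomposition (the paper phrases this with the idempotents $\E_j$, you with the multiplicities $m_j$, which is the same thing) and from below by $f_0 v$ using the non-negativity of $\tr(F_i^{(r,u)}(\A))$ supplied by Theorem~\ref{thm:num_non-back}. The equality analysis by forcing each sign-definite term to vanish also matches the paper's (terser) treatment.
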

\begin{proof}
Let $\A$ be the adjacency matrix of $H$. 
The matrix $\A$ can be expressed by the spectral decomposition $\A=\sum_{i=0}^d \tau_i \E_i$, where $\E_0=(1/v) \J, \E_1,\dots,\E_d$ are the primitive idempotents (projections onto the eigenspaces) of $A$.   
Applying $f$ to both sides, we obtain that
\begin{align} \label{eq:lp1}
\sum_{j=0}^d f(\tau_j) \E_j= f(\A) =\sum_{i\geq 0} f_i F_i^{(r,u)}(\A).
\end{align}
Taking traces in \eqref{eq:lp1} and using our hypothesis, we get that
\[
f(k)={\rm tr}(f(k) \E_0) \geq {\rm tr}\left( \sum_{j=0}^d f(\tau_j) \E_j \right) 
={\rm tr} \left( \sum_{i\geq 0} f_i F_i^{(r,u)}(\A)  \right) 
\geq {\rm tr} (f_0 \I) =v f_0,   
\]
and hence $v\leq f(k)/f_0$. 
The equivalent condition for this equality is immediate.  
\end{proof}
\begin{remark}
If a polynomial $f(x)$ satisfies the condition from Theorem~\ref{thm:lp_bound}, then so does $f(x)/f_0$. 
Thus we may normalize $f_0=1$. 
Theorem~\ref{thm:lp_bound} can be expressed as 
the following linear programming problem and its dual. 
\[
v \leq \max_{m_i} \left\{1+m_1+\cdots+m_d \colon\, 
\begin{array}{cc}
-\sum_{i=1}^d m_i F_j^{(r,u)}(\tau_i)\leq F_j^{(r,u)}(k), & j=1,\ldots,s, \\
m_i\geq 0, & i=1,\ldots ,d 
\end{array}
\right\},
\]
\[
v \leq \min_{f_j} \left\{ 1+f_1F_1^{(r,u)}(k)+\cdots+f_sF_s^{(r,u)}(k) \colon\,
\begin{array}{cc}
-\sum_{j=1}^s f_j F_j^{(r,u)}(\tau_i) \geq 1, & 
i=1,\ldots,d,\\
 f_j\geq 0, &j=1,\ldots, s
\end{array}
\right\},
\]
where $s$ is the degree of $f$, $m_i$ is the multiplicity of $\tau_i$ and $f_0=1$. 
For given $s$, we can estimate this bound numerically.  
\end{remark}

\begin{remark} \label{rem:girth}
Suppose $f(x)$ satisfies the condition from Theorem~\ref{thm:lp_bound} and $f_i>0$ for each $i \in \{1,\ldots, \deg f\}$. 
Then the equality of the bound holds if and only if $f(\tau_j)=0$ for each $j\in \{1,\ldots d\}$, and the girth of $H$ is 
at least $\deg f +1$. 
\end{remark}

\begin{remark}
For an $(r,u)$-biregular graph $B=(V_1,V_2,E)$, it is possible to apply LP bounds for the corresponding hypergraph $H$. 
From Theorem~\ref{thm:lp_bound}, we can obtain a bound as follows
\[
|V_1|+|V_2| = |V_1| + \frac{r}{u}|V_1| \leq \frac{r+u}{u}\cdot \frac{f(k)}{f_0}. 
\]
\end{remark}

\section{Distance-regularity for graphs with large girth} \label{5}

Let $H$ be an $r$-regular $u$-uniform hypergraph of 
adjacency matrix $\A$. Let $\Gamma_H$ be the point graph of $H$. 

\begin{theorem} \label{thm:drg}
Let $H$ be a connected $r$-regular $u$-uniform hypergraph of girth $g$, 
with only $d+1$ distinct eigenvalues. 
If $g \geq 2d-1 \geq 3$ holds, then 
the point graph $\Gamma_H$ is a distance-regular graph of diameter $d$.
\end{theorem}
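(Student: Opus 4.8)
The plan is to show that the distance structure of $\Gamma_H$ is controlled entirely by the non-backtracking walk counts, which by Theorem~\ref{thm:num_non-back} are exactly the entries of $F_i^{(r,u)}(\A)$. First I would establish the relationship between distance in $\Gamma_H$ and the girth hypothesis. The key point is that when the girth $g$ is large relative to the diameter, short walks cannot ``close up'' in more than one way: for two vertices $x,y$ at distance $i$ in $\Gamma_H$ with $i$ small compared to $g$, every walk of length $i$ between them is forced to be non-backtracking and to trace a geodesic, because any backtracking or any non-geodesic detour would create a short Berge cycle contradicting $g \geq 2d-1$. Concretely, I would argue that for $i \leq d$ the $(x,y)$-entry of $F_i^{(r,u)}(\A)$ is nonzero precisely when $d_{\Gamma_H}(x,y)=i$, so the matrices $F_0^{(r,u)}(\A), F_1^{(r,u)}(\A),\ldots,F_d^{(r,u)}(\A)$ have mutually disjoint supports that coincide with the distance relations $R_0,R_1,\ldots,R_d$ of $\Gamma_H$.

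Next I would leverage the eigenvalue count. Since $H$ (equivalently $\Gamma_H$, as $\A(\Gamma_H)=\A$) has exactly $d+1$ distinct eigenvalues, the adjacency algebra generated by $\I,\A,\A^2,\ldots$ has dimension exactly $d+1$, and is spanned by $\I,\A,\ldots,\A^d$, or equivalently by $F_0^{(r,u)}(\A),\ldots,F_d^{(r,u)}(\A)$ since each $F_i^{(r,u)}$ has degree $i$. The diameter of $\Gamma_H$ must then be exactly $d$: it is at least $d$ because otherwise fewer than $d+1$ of the $\A^i$ would be linearly independent, and I would show it is at most $d$ using that $F_{d+1}^{(r,u)}(\A)$ lies in the span of the lower-degree matrices while, by the walk-counting interpretation, its support would have to introduce a new distance relation if the diameter exceeded $d$. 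So the $d+1$ matrices $F_i^{(r,u)}(\A)=\A_i$ for $0\leq i\leq d$ are exactly the distance matrices of $\Gamma_H$ and they form a basis of the adjacency algebra.

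Finally I would verify distance-regularity directly. Once we know $\A_i = F_i^{(r,u)}(\A)$ for each $i\leq d$ and that $\{\A_0,\ldots,\A_d\}$ is a basis of the adjacency algebra that is closed under multiplication (being a commutative matrix algebra containing each $\A_i$ as a polynomial in $\A$), the products $\A_i \A_j$ expand as nonnegative-integer combinations $\sum_k p_{ij}^k \A_k$ with constants $p_{ij}^k$ independent of the chosen entry; reading off the $(x,y)$-entry for $(x,y)\in R_k$ gives exactly the definition $p_{ij}^k = |\{z : (x,z)\in R_i,\ (z,y)\in R_j\}|$, so these intersection numbers are well defined and $\Gamma_H$ is distance-regular of diameter $d$. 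The expansion coefficients are available from the three-term recurrence \eqref{eq:3-term}, which guarantees $\A\A_i = \A_i \A_1$ decomposes with the correct intersection array.

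I expect the main obstacle to be the first step: carefully proving that for $i\leq d$ the entry $(F_i^{(r,u)}(\A))_{xy}$ vanishes exactly when $d_{\Gamma_H}(x,y)\neq i$, and in particular that there are no ``extra'' non-backtracking walks of length $i$ joining vertices at smaller distance. This requires a precise girth argument relating Berge cycles in $H$ to closed walks in $\Gamma_H$, and the threshold $g\geq 2d-1$ (as opposed to $g\geq 2d$) must be used sharply — presumably handling the boundary case where two geodesics of length $d$ between the same pair of vertices would force a cycle of length just below the girth bound. Getting the inequality direction and the parity bookkeeping right there is the delicate part; the algebraic conclusion about intersection numbers is then comparatively routine.
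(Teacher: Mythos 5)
There is a genuine gap in the first (and, as you anticipated, most delicate) step. You claim that for every $i\leq d$ the support of $F_i^{(r,u)}(\A)$ is exactly the distance relation $R_i$ of $\Gamma_H$. This is true for $i\leq d-1$, but it is \emph{false} for $i=d$ under the hypothesis $g\geq 2d-1$: a Berge cycle of length exactly $2d-1$ (which the girth hypothesis permits) produces a pair of vertices $x,y$ with $d_{\Gamma_H}(x,y)=d-1$ that are nevertheless joined by a non-backtracking walk of length $d$ (go the ``long way'' around the cycle), so $(F_d^{(r,u)}(\A))_{xy}>0$ while $(x,y)\notin R_d$. This phenomenon is not an edge case to be handled with more care --- it is exactly the parameter $c_1\geq 0$ appearing in the intersection array of Remark~\ref{rem:gen_Moore} (the entry $s-1+c_1$ in position $a_{d-1}$), and it is why the theorem characterizes generalized Moore geometries only under the stronger hypothesis $g\geq 2d$. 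Since your identification $\A_d=F_d^{(r,u)}(\A)$ fails, the basis $\{F_0^{(r,u)}(\A),\ldots,F_d^{(r,u)}(\A)\}$ you build the rest of the argument on does not consist of the distance matrices, and the final verification of the intersection numbers (which relies on the $\A_i$ having disjoint $0$--$1$ supports partitioning $\J$) collapses. A secondary issue: your justification that the diameter is at least $d$ (``otherwise fewer than $d+1$ of the $\A^i$ would be linearly independent'') is not valid --- a connected graph with $d+1$ distinct eigenvalues can have diameter strictly less than $d$ --- so this too needs an argument.

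The paper's proof avoids the problem precisely at this point: it only uses $\A_i=F_i^{(r,u)}(\A_1)$ for $0\leq i\leq d-1$, and replaces your $F_d^{(r,u)}(\A)$ by the matrix $\B_d=\J-\sum_{i=0}^{d-1}\A_i$, which by construction is a $0$--$1$ matrix whose support is the set of pairs at distance $\geq d$. The span $\mathfrak{A}$ of $\{\A_0,\ldots,\A_{d-1},\B_d\}$ is then closed under the Hadamard product by construction, and the spectral decomposition shows it equals the span of the $d+1$ primitive idempotents, hence is also closed under ordinary multiplication and contains $\J$; Theorem 2.6.1 of \cite{BCNb} then yields a Bose--Mesner algebra of a class-$d$ association scheme, and the $P$-polynomial property (which also forces the diameter to equal $d$ and $\B_d=\A_d$) gives distance-regularity. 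If you replace your $F_d^{(r,u)}(\A)$ by $\J-\sum_{i=0}^{d-1}F_i^{(r,u)}(\A)$ and justify that it is a $0$--$1$ matrix equal to the distance-$d$ matrix as an output of the association-scheme argument rather than an input, your remaining algebraic steps essentially reproduce the paper's proof.
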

\begin{proof}
Since the girth $g$ is at least 3, $\Gamma_H$ is a simple graph. 
Let $\A_i$ be the $i$-th distance matrix of $\Gamma_H$. 
From $g \geq 2d-1$, we have 
\begin{equation} \label{eq:Ai}
\A_i=F_i^{(r,u)}(\A_1)
\end{equation} for 
$0 \leq i \leq d-1$, because the local structure of $\Gamma_H$ around a vertex is the same as $D_{r,u}$.  
Let $\B_d$ be the matrix defined by $\B_d=\J-\sum_{i=0}^{d-1}\A_i$. 
Let $\mathfrak{A}$ be the linear space spanned by $\{\A_0, \A_1, \ldots , \A_{d-1}, \B_d\}$ over $\mathbb{C}$. 
Note that $\mathfrak{A}$ is closed under the Hadamard product, and $\I \in \mathfrak{A}$. 

Let $\{\tau_0,\tau_1,\ldots, \tau_d\}$ be 
the eigenvalues of $\Gamma_H$, where $\tau_0=k=r(u-1)$.  
The matrix  $\A_1$ can be expressed by the spectral decomposition $\A_1=\sum_{i=0}^d \tau_i \E_i$, where 
$\E_0=(1/v)\J$ and $v$ is the order of $\Gamma_H$. 
We can express that 
\begin{align*}
\A_i&=F_i^{(r,u)}(\A_1)=\sum_{j=0}^d F_i^{(r,u)}(\tau_j) \E_j \qquad (0\leq i \leq d-1), \\
\B_d&= v \E_0 -\sum_{i=0}^{d-1}\A_i. 
\end{align*}
Thus $\mathfrak{A}$ is spanned by $\{\E_0,\ldots, \E_d\}$. 
It follows from $\dim \mathfrak{A}=d+1$ 
that $\{\E_0, \ldots, \E_d\}$ is a basis of $\mathfrak{A}$. 
Therefore $\mathfrak{A}$ is closed under the ordinary matrix multiplication, and $\J \in \mathfrak{A}$. 

By Theorem 2.6.1 in \cite{BCNb}, 
$\mathfrak{A}$ is the Bose-Mesner algebra of an association scheme of class $d$. Moreover, from equations \eqref{eq:Ai} and 
\[
\B_d =v \prod_{i=1}^d \frac{\A_1-\tau_i \I}{k-\tau_i} - 
\sum_{i=0}^{d-1} F_i^{(r,u)} (\A_1) ,
\]
the association scheme is $P$-polynomial \cite[Section 3.1]{BIb}, namely $\Gamma_H$ is 
a distance-regular graph of diameter $d$.  
\end{proof}
\begin{remark} \label{rem:gen_Moore}
Let $s=u-1$ and $t=r-1$. The intersection array of $\Gamma_H$ in Theorem~\ref{thm:drg} has the form {\small 
\[
\begin{pmatrix}
* & 1  & \cdots & 1 &1& c_2 \\
0 & s-1 & \cdots &s-1& s-1+c_1 & s(t+1) -c_2 \\
s(t+1) & st &  \cdots &st&st-c_1 & *
\end{pmatrix}
\]
}for some integers $c_1\geq 0$, $c_2 \geq 1$.  
If $g\geq 2d$ holds, then $c_1=0$ and $H$ is a generalized Moore geometry \cite[see p. 236 (iii)']{BIb}.   
An $r$-regular $u$-uniform hypergraph $H$ is called a {\it generalized Moore geometry} 
if in the point graph $\Gamma_H$ of diameter $d$, any two distinct vertices with distance less than $d$ are joined by a unique shortest path, and any two distinct points with distance $d$ are joined by exactly $c_2$ distinct shortest paths.  Roos and Van Zanten \cite{RvZ82} introduced the concept of generalized Moore geometries. When $c_2=1$, these are the same as Moore geometries who were studied by Bose and Dowling \cite{BD71} as generalizations of Moore graphs introduced by Hoffman and Singleton \cite{HS60}.
\end{remark}

\section{Maximizing the order for given second eigenvalue} \label{6}

Throughout Sections~\ref{6} and \ref{7}, the hypergraphs $H$ considered are connected with girth at least $3$, meaning that the point graph $\Gamma_H$ is a simple connected graph. Let $h(r,u,\theta)$ denote the maximum order  of a connected $r$-regular $u$-uniform hypergraph 
whose second eigenvalue is at most $\theta$. 
In this section, we give an upper bound for $h(r,u,\theta)$ and characterize
the hypergraphs attaining the bound. 
First we prove several results that are used later. 
\begin{lemma} \label{lem:T}
Let $\T(r,u,d,c)$ be the $(d+1) \times (d+1)$ tridiagonal matrix 
with diagonals 
\[
\begin{bmatrix}
* & 1 & \cdots & 1 & c \\
0 & s-1 & \cdots& s-1 & s(t+1)-c \\
s(t+1) & st & \cdots &st & *
\end{bmatrix},
\] 
where $s=u-1$ and $t=r-1$. 
Then the characteristic polynomial of $\T(r,u,d,c)$ is 
$$(x-k)(\sum_{i=0}^{d-1} cF_i^{(r,u)}(x)+F_{d}^{(r,u)}(x)),$$
where $k=r(u-1)$. 
\end{lemma}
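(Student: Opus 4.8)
The plan is to compute the characteristic polynomial of $\T(r,u,d,c)$ from the classical three-term recurrence satisfied by the characteristic polynomials of the leading principal submatrices of a tridiagonal matrix, and then to identify the outcome with the claimed expression using the partial-sum identity for the polynomials $F_i^{(r,u)}$ recorded earlier. Write $s=u-1$, $t=r-1$, $q=(r-1)(u-1)=st$ and $k=r(u-1)=s(t+1)$, and denote the diagonal, super- and sub-diagonal entries of $\T(r,u,d,c)$ by $a_0,\dots,a_d$, $b_0,\dots,b_{d-1}$ and $c_1,\dots,c_d$, so that $a_0=0$, $a_j=s-1$ and $c_j=1$ for $1\le j\le d-1$, $b_0=k$, $b_j=st$ for $1\le j\le d-1$, and $a_d=k-c$, $c_d=c$. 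As a preliminary remark, every row of $\T(r,u,d,c)$ sums to $k$, since $a_0+b_0=k$, $c_j+a_j+b_j=1+(s-1)+st=k$ for interior $j$, and $c_d+a_d=c+(k-c)=k$; hence $\T(r,u,d,c)\mathbf{1}=k\mathbf{1}$ and $(x-k)$ divides the characteristic polynomial, which accounts for the first factor.

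Next I would let $\phi_n(x)$ be the characteristic polynomial of the top-left $n\times n$ submatrix, so that cofactor expansion along the last row gives
\[
\phi_n(x)=(x-a_{n-1})\phi_{n-1}(x)-b_{n-2}c_{n-1}\phi_{n-2}(x),\qquad \phi_0=1,\ \phi_1=x.
\]
For $2\le n\le d$ the entries $a_{n-1}$, $b_{n-2}$, $c_{n-1}$ are the interior ones ($a_{n-1}=s-1$, $c_{n-1}=1$, and $b_{n-2}=st$ for $n\ge3$ while $b_0=k$ for $n=2$), so this recurrence reduces to the defining relation \eqref{eq:3-term} for $F^{(r,u)}$; the case $n=2$ reproduces $F_2^{(r,u)}=x^2-(s-1)x-k$ precisely because $b_0c_1=k$. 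A short induction then yields $\phi_n=F_n^{(r,u)}$ for all $0\le n\le d$. Applying the recurrence once more at the modified corner (where $a_d=k-c$, $b_{d-1}=st$, $c_d=c$) gives, for $d\ge2$,
\[
\phi_{d+1}(x)=(x-k+c)F_d^{(r,u)}(x)-c\,st\,F_{d-1}^{(r,u)}(x).
\]

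Finally I would match this with the target. From the identity $G_{d-1}^{(r,u)}(x)=\sum_{i=0}^{d-1}F_i^{(r,u)}(x)=\bigl(F_d^{(r,u)}(x)-qF_{d-1}^{(r,u)}(x)\bigr)/(x-k)$ (valid for $d-1\ge1$) and $q=st$, one has $(x-k)\sum_{i=0}^{d-1}F_i^{(r,u)}=F_d^{(r,u)}-st\,F_{d-1}^{(r,u)}$, whence
\[
(x-k)\Bigl(c\sum_{i=0}^{d-1}F_i^{(r,u)}(x)+F_d^{(r,u)}(x)\Bigr)=c\bigl(F_d^{(r,u)}-st\,F_{d-1}^{(r,u)}\bigr)+(x-k)F_d^{(r,u)},
\]
and the right-hand side simplifies to $(x-k+c)F_d^{(r,u)}-c\,st\,F_{d-1}^{(r,u)}$, which is exactly $\phi_{d+1}(x)$. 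The degenerate case $d=1$ (a $2\times2$ matrix) is checked directly and agrees.

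I expect no serious obstacle here: the argument is essentially bookkeeping. The only points demanding care are keeping the index conventions straight so that the two \emph{irregular} ends ($a_0=0$, $b_0=k$ on one side and $a_d=k-c$, $c_d=c$ on the other) are separated from the interior, which coincides with the intersection numbers of $D_{r,u}$, and recognizing that the partial-sum identity for $G_{d-1}^{(r,u)}$ is exactly the relation that converts the claimed product into the cofactor expression. Small diameters should be verified directly both to anchor the induction and to cover the value $d=1$, where $b_{d-1}=b_0=k\neq st$ and the $G$-identity is not available.
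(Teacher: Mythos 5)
Your proposal is correct and follows essentially the same route as the paper: identify the characteristic polynomials of the leading principal submatrices with the $F_i^{(r,u)}$, expand along the last row to get $(x-k+c)F_d^{(r,u)}(x)-c\,q\,F_{d-1}^{(r,u)}(x)$, and convert to the stated product via the relation $F_d^{(r,u)}-qF_{d-1}^{(r,u)}=(x-k)\sum_{i=0}^{d-1}F_i^{(r,u)}$ (the paper telescopes this identity step by step rather than quoting the $G_{d-1}^{(r,u)}$ formula, a cosmetic difference). Your explicit treatment of the $d=1$ corner, where $b_0=k\ne st$ and the displayed expansion of the paper does not literally apply, is a small point of extra care that the paper's proof passes over silently.
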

\begin{proof}
By strong induction on $i$, one can prove that $F_i^{(r,u)}(x)$ is the characteristic polynomial of the principal submatrix of $\T(r,u,i+1,c)$ obtained by deleting the last row and column of this matrix. The Laplace expansion along the last row of $\T(r,u,d,c)$ yields 
\begin{align*}
|x \I -&\T(r,u,d,c)|
=(x-k+c)F_d^{(r,u)}(x)-cqF_{d-1}^{(r,u)}(x)\\
&=(x-k)F^{(r,u)}_d(x)+c(F_d^{(r,u)}(x)-qF_{d-1}^{(r,u)}(x))\\
&=(x-k)F_d^{(r,u)}(x)+c(x-k)F_{d-1}^{(r,u)}(x)+c(F_{d-1}^{(r,u)}(x)-qF_{d-2}^{(r,u)}(x))\\
&=(x-k)\left(F_d^{(r,u)}(x)+c\sum_{i=0}^{d-1}F_i^{(r,u)}(x)  \right), 
\end{align*}
where $k=s(t+1)$ and $q=st$. 
\end{proof}

\begin{lemma} \label{lem:5.2}
Let $\lambda_j$ be the largest zero of $G_j^{(r,u)}(x)$. 
Then $\bigcup_{j=1}^\infty [ \lambda_j, \lambda_{j+1}] = [-1, u-2 +2 \sqrt{q})$ holds, where $q=(r-1)(u-1)$. 
\end{lemma}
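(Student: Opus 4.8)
The plan is to exploit the fact, already recorded in the excerpt, that the polynomials $G_j^{(r,u)}(x)$ form a sequence of orthogonal polynomials with respect to the nonnegative weight $(k-x)w(x)$, where $w$ is the weight in \eqref{eq:weight1}/\eqref{eq:weight2} and $k=r(u-1)$. The first step is to pin down the support of this new measure. Since
\[
k-x=(\sqrt{q}-1)^2+\bigl(u-2+2\sqrt{q}-x\bigr)>0
\]
for all $x\in I_{r,u}=[u-2-2\sqrt q,\,u-2+2\sqrt q]$ (using $q=(r-1)(u-1)>1$), multiplying $w$ by $k-x$ keeps the absolutely continuous part strictly positive on the interior of $I_{r,u}$, so its support is still the whole interval $I_{r,u}$. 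The possible atom at $-r$, present only when $r<u$, lies strictly to the left of $I_{r,u}$, because $-r\le u-2-2\sqrt q$ by the AM--GM inequality $(r-1)+(u-1)\ge 2\sqrt{q}$. Consequently the orthogonality measure of the $G_j^{(r,u)}$ has support whose supremum is exactly $u-2+2\sqrt q$, while the atom never affects the behaviour of the \emph{largest} zeros.

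Next I would feed the sequence $\lambda_j$ into three classical facts about zeros of orthogonal polynomials (see Chihara or Szeg\H{o}). First, every zero of $G_j^{(r,u)}$ lies in the open interior of the convex hull of the support, so in particular $\lambda_j<u-2+2\sqrt q$ strictly for all $j$. Second, by the interlacing of zeros of consecutive orthogonal polynomials, the largest zeros form a strictly increasing sequence $\lambda_1<\lambda_2<\lambda_3<\cdots$. Third, the largest zeros converge to the supremum of the support, i.e.\ $\lambda_j\to u-2+2\sqrt q$ as $j\to\infty$. Separately, a direct computation identifies the left end: since $G_1^{(r,u)}(x)=F_0^{(r,u)}(x)+F_1^{(r,u)}(x)=1+x$, its unique zero is $\lambda_1=-1$.

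Finally I would assemble the union. Because the intervals $[\lambda_j,\lambda_{j+1}]$ share endpoints and chain together, and because $\lambda_j\uparrow u-2+2\sqrt q$ without ever attaining this value,
\[
\bigcup_{j=1}^{\infty}[\lambda_j,\lambda_{j+1}]=\Bigl[\lambda_1,\ \lim_{j\to\infty}\lambda_j\Bigr)=\bigl[-1,\ u-2+2\sqrt q\bigr),
\]
which is the claimed identity: any $x$ with $-1\le x<u-2+2\sqrt q$ lies in some $[\lambda_j,\lambda_{j+1}]$ by the strict monotone convergence, while $u-2+2\sqrt q$ itself is excluded since every $\lambda_{j+1}$ is strictly smaller.

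The routine facts (interlacing, zeros inside the convex hull, $\lambda_1=-1$) are immediate, so the main obstacle is the convergence $\lambda_j\to u-2+2\sqrt q$: one must be sure that the relevant endpoint is governed by the absolutely continuous part of $(k-x)w(x)$ on $I_{r,u}$ and is not disturbed by the isolated mass at $-r$ when $r<u$. This is exactly why the preliminary support analysis—showing $(k-x)>0$ on $I_{r,u}$ and locating the atom strictly to the left—is the crucial ingredient that licenses the classical limit theorem to deliver the right endpoint $u-2+2\sqrt q$.
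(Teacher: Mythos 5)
Your proof is correct, and it reaches the conclusion by a genuinely different route from the paper. The paper's own argument is a three-line affair that imports a quantitative estimate from Feng--Li: the bracketing $u-2+2\sqrt{q}\cos(\pi/j)\le\lambda_j\le u-2+2\sqrt{q}\cos(\pi/(j+1))$, from which the monotonicity of $(\lambda_j)$, the strict bound $\lambda_j<u-2+2\sqrt{q}$, and the limit $\lambda_j\to u-2+2\sqrt{q}$ all drop out simultaneously; combined with $\lambda_1=-1$ this yields the union exactly as in your final paragraph. You instead obtain the same three facts from the general theory of orthogonal polynomials applied to the measure $(k-x)w(x)$, after correctly verifying that $k-x=(\sqrt{q}-1)^2+(u-2+2\sqrt{q}-x)$ is positive on (the interior of) $I_{r,u}$ and that the possible atom at $-r$ lies weakly to the left of $I_{r,u}$ by AM--GM, so that $\sup\operatorname{supp}=u-2+2\sqrt{q}$. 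This is a legitimate, self-contained alternative; its cost is that ``the extreme zeros converge to the endpoints of the support'' is a deeper classical theorem than interlacing and zero-location, whereas the Feng--Li estimate is elementary to quote and, moreover, is reused later in the paper (the proof of Theorem~\ref{thm:upperbndn} needs the explicit lower bound $\lambda_{2\ell}>u-2+2\sqrt{q}\cos(\pi/2\ell)$, which your qualitative argument does not supply). One cosmetic caveat: your displayed claim that $k-x>0$ for \emph{all} $x\in I_{r,u}$ relies on $q>1$ and fails at the right endpoint when $q=1$ (i.e.\ $r=u=2$); this is harmless, since only positivity of the density on the interior is needed to identify the support, but the inequality should be stated for the interior or the hypothesis $q>1$ made explicit.
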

\begin{proof}
The largest zero $\lambda_j$ is  located between $u-2+  2\sqrt{q} \cos (\pi/j)$ and $u-2+ 2\sqrt{q} \cos (\pi/(j+1))$ (see \cite{FL96}). 
This implies that $\lim_{j \to \infty} \lambda_j=u-2 +2 \sqrt{q}$. Since $\lambda_1 = -1$, the assertion follows.   
\end{proof}

The following is the main theorem in this section. 
\begin{theorem} \label{thm:largest_graph}
Let $k=r(u-1)$ and $q=(r-1)(u-1)$. 
Let $\lambda_j$ be the largest zero of $G_j^{(r,u)}(x)$. Let 
$\theta$ be a real number in $[-1, u-2 +2 \sqrt{q})$. 
There exists an integer $d\geq 1$ such that $\lambda_{d-1} < \theta \leq  \lambda_{d}$ by Lemma~\ref{lem:5.2}.  Then 
\[
h(r,u,\theta) \leq 1+\sum_{j=0}^{d-2} kq^j+ \frac{kq^{d-1}}{c},
\]
where  
$c=-F_{d}^{(r,u)}(\theta)/G_{d-1}^{(r,u)}(\theta)$. 
Equality holds if and only if there is a generalized Moore geometry whose intersection array is the tridiagonal entries of $\T(r,u,d,c)$. 
\end{theorem}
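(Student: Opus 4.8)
The plan is to apply the LP bound (Theorem~\ref{thm:lp_bound}) with a carefully chosen polynomial $f$ built from the sequence $\bigl(F_i^{(r,u)}\bigr)$, and then identify the equality case with a generalized Moore geometry via the distance-regularity result of Theorem~\ref{thm:drg}. Concretely, I would set
\[
f(x)=c\sum_{i=0}^{d-1} F_i^{(r,u)}(x)+F_d^{(r,u)}(x)=c\,G_{d-1}^{(r,u)}(x)+F_d^{(r,u)}(x),
\]
which is exactly the polynomial appearing (up to the factor $x-k$) as the characteristic polynomial of $\T(r,u,d,c)$ in Lemma~\ref{lem:T}. The constant $c=-F_d^{(r,u)}(\theta)/G_{d-1}^{(r,u)}(\theta)$ is chosen precisely so that $f(\theta)=0$. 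Writing $f=\sum_i f_i F_i^{(r,u)}$, the coefficients are $f_i=c\geq 0$ for $0\le i\le d-1$ and $f_d=1$, so the sign conditions $f_0>0$ and $f_i\ge 0$ are immediate once I check $c>0$. Since $\theta\in(\lambda_{d-1},\lambda_d]$ and $G_{d-1}^{(r,u)}$ is orthogonal with largest zero $\lambda_{d-1}$, one has $G_{d-1}^{(r,u)}(\theta)>0$; the real work is to show $F_d^{(r,u)}(\theta)\le 0$ on this interval so that $c>0$, and I would extract this from the interlacing of the zeros of the orthogonal families $F_d^{(r,u)}$ and $G_{d-1}^{(r,u)}$ together with Lemma~\ref{lem:5.2}.

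The heart of the argument is verifying the LP hypothesis $f(\tau_j)\le 0$ for every eigenvalue $\tau_j\ne k$. By hypothesis $\tau_1\le\theta$, so I must argue that $f(x)\le 0$ for all $x\in[\,?\,,\theta]$ covering the spectrum below $\theta$. Here the factorization $(x-k)f(x)$ being the characteristic polynomial of the tridiagonal matrix $\T(r,u,d,c)$ is the key leverage: its $d+1$ roots are $k$ together with the $d$ roots of $f$, and these are real and simple (tridiagonal matrices with positive off-diagonal products have real simple spectrum). The choice of $c$ places the largest root of $f$ exactly at $\theta$, and I would show the remaining $d-1$ roots of $f$ all lie below $\theta$, so that on the relevant spectral range $f$ is nonpositive because $f$ changes sign only at its roots and is negative just to the left of its largest root $\theta$ (leading coefficient of $f$ is positive, degree $d$). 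Combining, $f(\tau_j)\le 0$ holds for all $\tau_j\le\theta$, and the LP bound yields $v\le f(k)/f_0$. Evaluating $f(k)/f_0$ using $F_i^{(r,u)}(k)=kq^{i-1}$ for $i\ge1$ and $F_0^{(r,u)}(k)=1$ gives exactly $1+\sum_{j=0}^{d-2}kq^j+kq^{d-1}/c$ after dividing by $f_0=c$.

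For the equality characterization I would invoke the equality clause of Theorem~\ref{thm:lp_bound}: equality forces $f(\tau_j)=0$ for all $j\in\{1,\dots,d\}$, meaning the distinct nontrivial eigenvalues are precisely the $d$ roots of $f$, so $H$ has exactly $d+1$ distinct eigenvalues. Equality also forces $f_i\cdot\tr\bigl(F_i^{(r,u)}(\A)\bigr)=0$; since $f_d=1>0$ this gives $\tr\bigl(F_d^{(r,u)}(\A)\bigr)=0$, and if $c>0$ then $\tr\bigl(F_i^{(r,u)}(\A)\bigr)=0$ for all $1\le i\le d$ as well, which by the girth corollary means $g\ge d+1$. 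I would then combine $g\ge d+1\ge 2d-1$ (valid for $d\le 2$) or, more carefully, use $g\ge 2d$ together with Theorem~\ref{thm:drg} and Remark~\ref{rem:gen_Moore} to conclude that $\Gamma_H$ is distance-regular with $c_1=0$, i.e.\ $H$ is a generalized Moore geometry; the intersection array is read off from $\T(r,u,d,c)$ since its characteristic polynomial matches that of $\Gamma_H$. The main obstacle I anticipate is the sign analysis establishing $c>0$ and the location of the roots of $f$ relative to $\theta$ uniformly in the interval $(\lambda_{d-1},\lambda_d]$; this requires exploiting Theorem~\ref{thm:coef_posi} (nonnegativity of the linearization coefficients) to control how $F_d^{(r,u)}$ and $G_{d-1}^{(r,u)}$ interlace, and reconciling the girth bound $g\ge d+1$ with the stronger $g\ge 2d$ needed for the generalized Moore geometry conclusion.
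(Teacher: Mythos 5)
There is a genuine gap, and it is exactly at the point you flagged as "the heart of the argument." You apply the LP bound to the degree-$d$ polynomial $f(x)=c\,G_{d-1}^{(r,u)}(x)+F_d^{(r,u)}(x)$ and argue that, because its $d$ roots are real, simple, and all at most $\theta$, one gets $f(\tau_j)\le 0$ for every eigenvalue $\tau_j\le\theta$. This inference is false: a degree-$d$ polynomial with positive leading coefficient and $d$ simple real roots alternates sign between consecutive roots, so it is strictly \emph{positive} on roughly half of the subintervals of $(-\infty,\theta)$ determined by its roots. An eigenvalue of $H$ landing in one of those subintervals violates the hypothesis $f(\tau_j)\le 0$ of Theorem~\ref{thm:lp_bound}, and nothing in the setup prevents this. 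The paper avoids the problem by taking
\[
g(x)=\sum_{i=0}^{d-1}cF_i^{(r,u)}(x)+F_d^{(r,u)}(x),\qquad f(x)=\frac{g(x)^2}{x-\theta},
\]
where $\theta$ is the largest zero of $g$; then $f(x)\le 0$ for \emph{all} $x\le\theta$ simply because $g(x)^2\ge 0$ and $x-\theta\le 0$, with no root-location analysis needed. The price is that the coefficient positivity of $f$ in the $F_i^{(r,u)}$-basis is no longer trivial: the paper first expands $g(x)/(x-\theta)$ with positive coefficients (via Theorem 3.1 of Cohn--Kumar) and then multiplies by $g$ using the nonnegative linearization coefficients of Theorem~\ref{thm:coef_posi} --- which is the actual role of that theorem here, not the interlacing control you proposed. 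Note also that $f(k)/f_0$ still evaluates to $g(k)/c$ because $p_0(i,j)=F_i^{(r,u)}(k)\delta_{ij}$ collapses the constant term to $c\,h(k)$ where $h=g/(x-\theta)$.

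The same substitution repairs the second difficulty you correctly anticipated in the equality case. With your degree-$d$ polynomial, Remark~\ref{rem:girth} only yields girth at least $d+1$, which meets the hypothesis $g\ge 2d-1$ of Theorem~\ref{thm:drg} only for $d\le 2$ and never reaches the $g\ge 2d$ needed to force $c_1=0$ in Remark~\ref{rem:gen_Moore}. The paper's $f$ has degree $2d-1$ with strictly positive coefficients, so equality forces girth at least $2d$, which gives distance-regularity \emph{and} the generalized Moore geometry structure in one stroke; the intersection array is then read off from Lemma~\ref{lem:T} as you intended. In short, your outline identifies the right ingredients and the right target, but the single missing idea --- squaring $g$ and dividing by $x-\theta$ --- is what makes both the sign condition and the girth bound work, and without it the argument does not go through.
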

\begin{proof}
Since $c=-F_{d}^{(r,u)}(\theta)/G_{d-1}^{(r,u)}(\theta)$ and 
$F_{d}^{(r,u)}(x)=G_{d}^{(r,u)}(x)-G_{d-1}^{(r,u)}(x)$, it follows that $g(\theta)=0$, where
\begin{align} \label{eq:ch_drg}
g(x)&=(c-1)G_{d-1}^{(r,u)}(x)+G_{d}^{(r,u)}(x) \nonumber \\
&= \sum_{i=0}^{d-1}c F_{i}^{(r,u)}(x)+F_{d}^{(r,u)} (x). 
\end{align}
By $\lambda_{d-1} < \theta \leq \lambda_{d}$, we have $G_{d-1}^{(r,u)}(\theta)>0$ and $G_{d}^{(r,u)}(\theta)\leq 0$. 
This implies that $c\geq 1$ and $\theta$ is the largest zero of $g(x)$. 
Let $f(x)$ be the polynomial defined to be 
\[
f(x)=\frac{g(x)^2}{x-\theta}. 
\]
We would like to apply Theorem~\ref{thm:lp_bound} to $f(x)$. It is clear that the value $f(\tau)$ is not positive for any $\tau \leq \theta$. 

We prove that $f(x)$ satisfies the condition of coefficients from Theorem~\ref{thm:lp_bound}. 
By Theorem 3.1 in \cite{CK07}, 
the polynomial $g(x)/(x-\theta)$
has positive coefficients in terms of $G_i^{(r,u)}(x)$ ($0\leq i\leq d-1$).
This implies that $g(x)/(x-\theta)$ 
has positive coefficients in terms of $ F_i^{(r,u)}(x)$ ($0 \leq i \leq d-1$). 
By Theorem~\ref{thm:coef_posi}, 
$f(x)$ has positive coefficients 
in terms of $F_{i}^{(r,u)}(x)$ ($0 \leq i \leq 2d-1$). 
Thus $f(x)$ satisfies the condition from Theorem~\ref{thm:lp_bound}. 

Set the positive coefficients $f_i$, $g_i$, and $h_i$ as 
$f(x)=\sum_{i=0}^{2d-1}f_iF_{i}^{(r,u)}(x)$, 
$g(x)=\sum_{i=0}^{d}g_iF_{i}^{(r,u)}(x)$,
and $h(x)=g(x)/(x-\theta)=\sum_{i=0}^{d-1}h_iF_{i}^{(r,u)}(x)$. 
By Theorem~\ref{thm:coef_posi},  it follows that 
\[
f_0=\sum_{i=0}^{d-1}g_ih_iF_{i}^{(r,u)}(k)
=\sum_{i=0}^{d-1}ch_iF_{i}^{(r,u)}(k)=ch(k).
\]
Applying Theorem~\ref{thm:lp_bound}
to $f(x)$, one has
\begin{align*}
h(r,u,\theta)& \leq \frac{f(k)}{f_0}=\frac{g(k)}{c}\\
&=\sum_{i=0}^{d-1}F_{i}^{(r,u)}(k)+\frac{F_{d}^{(r,u)}(k)}{c}\\
&=1+\sum_{j=0}^{d-2} kq^j+ \frac{kq^{d-1}}{c}. 
\end{align*}
If the equality holds, then the girth of an extremal hypergraph $H$ is
at least $2d$ by Remark~\ref{rem:girth}. Since the number of zeros of $f(x)$ is $d$, 
the number of non-trivial distinct eigenvalues of $H$ is at most $d$. 
Therefore $H$ is a generalized Moore geometry whose intersection array is the tridiagonal entries of $\T(r,u,d,c)$ by Theorem~\ref{thm:drg}, Remark~\ref{rem:gen_Moore}, equation \eqref{eq:ch_drg}, and Lemma~\ref{lem:T}.  
It is clear that a generalized Moore geometry attains the bound. 
\end{proof}
\begin{remark}
The upper bound $M(r,u,\theta)=1+\sum_{j=0}^{d-2} kq^j+ kq^{d-1}/{c}$ is monotonically increasing on $\theta \in [-1,u-2+2\sqrt{q})$ since $c\geq 1$ and $c=-F_{d}^{(r,u)}(\theta)/G_{d-1}^{(r,u)}(\theta)$ is monotonically decreasing on $\theta \in (\lambda_{d-1},\lambda_{d}]$. 
\end{remark}
The following is immediate from Theorem~\ref{thm:largest_graph}. 
\begin{corollary}[Feng--Li \cite{FL96}] \label{coro:infi}
There exist only finitely many $r$-regular $u$-uniform hypergraphs 
whose second eigenvalues are less than  $u-2+2\sqrt{q}$. 
\end{corollary}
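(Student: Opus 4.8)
The plan is to read Corollary~\ref{coro:infi} in the precise sense announced in the introduction: for each fixed real number $\theta<u-2+2\sqrt{q}$, only finitely many connected $r$-regular $u$-uniform hypergraphs $H$ satisfy $\tau_2(H)\le\theta$. Since $r$ and $u$ are held fixed, there are only finitely many such hypergraphs of any prescribed order (finitely many admissible edge sets on a bounded vertex set, hence finitely many isomorphism classes), so the entire statement reduces to producing a uniform bound on the order of every hypergraph with $\tau_2\le\theta$. In short, it suffices to show that $h(r,u,\theta)<\infty$, where $h(r,u,\theta)$ is the maximum order already introduced above.

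First I would dispose of the degenerate range. Under the standing assumption of this section the point graph $\Gamma_H$ is a simple connected graph, and a trace argument shows its second-largest eigenvalue is at least $-1$: if $\tau_2<-1$ then all of $\tau_1,\dots$ except the top would force $\tau_1>n-1$, contradicting $\tau_1\le n-1$. Hence for $\theta<-1$ there are no hypergraphs to count and the claim is vacuous, and it remains only to treat $\theta\in[-1,u-2+2\sqrt{q})$.

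For such $\theta$, Lemma~\ref{lem:5.2} furnishes an integer $d\ge 1$ with $\lambda_{d-1}<\theta\le\lambda_{d}$, where $\lambda_j$ is the largest zero of $G_j^{(r,u)}$. Theorem~\ref{thm:largest_graph} then applies directly and gives
\[
h(r,u,\theta)\le 1+\sum_{j=0}^{d-2}kq^{j}+\frac{kq^{d-1}}{c},
\]
with $c=-F_{d}^{(r,u)}(\theta)/G_{d-1}^{(r,u)}(\theta)\ge 1$. The right-hand side is a finite number depending only on $r$, $u$ and $\theta$. Thus every connected $r$-regular $u$-uniform hypergraph with $\tau_2\le\theta$ has order at most this value, and combined with the finiteness of hypergraphs of bounded order this yields the corollary.

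The only genuine subtlety—and the step I would flag—is \emph{interpretive} rather than technical, which is why the corollary is labelled ``immediate.'' The literal phrase ``less than $u-2+2\sqrt{q}$'' must be understood as ``bounded by a fixed constant $\theta$ strictly below $u-2+2\sqrt{q}$.'' Indeed, the bound $M(r,u,\theta)$ of Theorem~\ref{thm:largest_graph} increases with $d$, and $d\to\infty$ as $\theta\uparrow u-2+2\sqrt{q}$ by Lemma~\ref{lem:5.2}, so no single finite bound holds uniformly over the whole half-open interval. The finiteness conclusion is therefore a statement about each fixed threshold, and once that reading is adopted everything else is a specialization of Theorem~\ref{thm:largest_graph}.
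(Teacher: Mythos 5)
Your argument is exactly the one the paper intends: the corollary is presented as an immediate consequence of Theorem~\ref{thm:largest_graph}, and your specialization via Lemma~\ref{lem:5.2} (fix $\theta<u-2+2\sqrt{q}$, locate $d$ with $\lambda_{d-1}<\theta\le\lambda_d$, apply the order bound, and invoke the finiteness of hypergraphs of bounded order) is precisely that deduction. Your reading of ``less than $u-2+2\sqrt{q}$'' as ``at most a fixed $\theta$ strictly below $u-2+2\sqrt{q}$'' agrees with the formulation stated in the introduction, so the proposal is correct and follows the paper's route.
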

\begin{remark}
For $(r,u)$-biregular graphs $B$, it follows that 
\[
\lim \inf \tau_2(B) \geq \sqrt{r-1}+\sqrt{u-1}
\]
as the number of vertices in $B$ tends to infinity, 
where $\tau_2(B)$ is the second eigenvalue of $B$ \cite{LS96}.  We can obtain similar results to Theorem~\ref{thm:largest_graph} and Corollary~\ref{coro:infi} for biregular graphs. 
\end{remark}
Feng and Li \cite{FL96} prove that if an $r$-regular $u$-uniform hypergraph $H$ has diameter at least $2\ell+2$, then $\tau_2(H)\geq u-2 +2 \sqrt{q}-(2\sqrt{q}-1)/\ell$. 
The contrapositive of this result is that 
if $\tau_2(H) < u-2 +2 \sqrt{q}-(2\sqrt{q}-1)/\ell$ holds, then the diameter of $H$ is at most $2\ell+1$, and hence the order is at most $1+\sum_{j=0}^{2 \ell} kq^j$. Our next result improves this bound and extends a result from \cite{Ci20} to hypergraphs. 
\begin{theorem}\label{thm:upperbndn}
Suppose $r\geq 3$, $u\geq 2$ and $\ell \geq 1$.  Let $H$ be a connected $r$-regular $u$-uniform hypergraph of order $v$. If the second eigenvalue $\tau_2$ of $H$ is at most $u-2 +2 \sqrt{q}-(2\sqrt{q}-1)/\ell$, then $v\leq 1+\sum_{j=0}^{2\ell-1} kq^j$.  
The equality holds only if $(r,u,\ell)=(3,2,1)$, and the Petersen graph attains the bound.    
\end{theorem}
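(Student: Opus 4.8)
The plan is to deduce the bound directly from Theorem~\ref{thm:largest_graph}, so that the whole argument reduces to locating the threshold $\theta_0:=u-2+2\sqrt q-(2\sqrt q-1)/\ell$ among the largest zeros $\lambda_j$ of the polynomials $G_j^{(r,u)}$. Since $H$ has second eigenvalue bounded by $\theta_0$, we have $v\le h(r,u,\theta_0)$. First I would check that $\theta_0\in[-1,u-2+2\sqrt q)$, so that Theorem~\ref{thm:largest_graph} may be applied at $\theta_0$: the quantity $\theta_0$ is increasing in $\ell$ and equals $u-1\ge 1$ at $\ell=1$, giving $\theta_0\in[1,u-2+2\sqrt q)$. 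Theorem~\ref{thm:largest_graph} then yields $h(r,u,\theta_0)\le 1+\sum_{j=0}^{d-2}kq^j+kq^{d-1}/c$, where $d$ is determined by $\lambda_{d-1}<\theta_0\le\lambda_d$ and $c\ge 1$.

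The crux is the inequality $\theta_0\le\lambda_{2\ell}$, which forces $d\le 2\ell$. Granting it, $c\ge 1$ gives $1+\sum_{j=0}^{d-2}kq^j+kq^{d-1}/c\le 1+\sum_{j=0}^{d-1}kq^j\le 1+\sum_{j=0}^{2\ell-1}kq^j$, which is the claimed bound, and combined with $v\le h(r,u,\theta_0)$ finishes the estimate. To prove $\theta_0\le\lambda_{2\ell}$ I would invoke the estimate $\lambda_{2\ell}\ge u-2+2\sqrt q\cos(\pi/(2\ell))$ from Lemma~\ref{lem:5.2}; the desired inequality then reduces to $2\sqrt q\,(1-\cos(\pi/(2\ell)))\le(2\sqrt q-1)/\ell$, which follows for $\ell\ge 2$ from $1-\cos x\le x^2/2$ and $q=(r-1)(u-1)\ge 2$. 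The cosine estimate is too weak at $\ell=1$, so that case I would treat by hand: there $\theta_0=u-1$, and a direct evaluation gives $G_2^{(r,u)}(u-1)=1-(r-2)(u-1)\le 0$, whence $u-1\le\lambda_2$. I expect this boundary case $\ell=1$ to be the main obstacle, since it is exactly where the inequality becomes tight.

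For the equality discussion, note that $v=1+\sum_{j=0}^{2\ell-1}kq^j$ forces every inequality above to be tight: one needs $d=2\ell$ (otherwise at least the term $kq^{2\ell-1}$ is lost), $c=1$, and $H$ must attain $h(r,u,\theta_0)$, so by the equality clause of Theorem~\ref{thm:largest_graph} it is a generalized Moore geometry whose intersection array is the tridiagonal part of $\T(r,u,2\ell,1)$. Since $c=1$ is equivalent to $\theta_0=\lambda_{2\ell}$ (from $g(x)=(c-1)G_{d-1}^{(r,u)}(x)+G_d^{(r,u)}(x)$ and $G_{d-1}^{(r,u)}(\theta_0)>0$), I would finish by showing this equality can only occur at $\ell=1$ with $(r,u)=(3,2)$: for $\ell\ge 2$ the estimate of the previous paragraph is strict already at $q=2$, so $\theta_0<\lambda_{2\ell}$, while for $\ell=1$ the quantity $G_2^{(r,u)}(u-1)=1-(r-2)(u-1)$ vanishes precisely when $(r-2)(u-1)=1$, i.e. $r=3$, $u=2$. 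In that case $\T(3,2,2,1)$ has intersection array $\{3,2;1,1\}$, so the extremal hypergraph is the Petersen graph, as claimed.
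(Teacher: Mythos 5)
Your argument is correct and follows the same route as the paper's proof: both reduce the claim, via Theorem~\ref{thm:largest_graph}, to showing $u-2+2\sqrt{q}-(2\sqrt{q}-1)/\ell\le\lambda_{2\ell}$ with equality only for $(r,u,\ell)=(3,2,1)$; both handle $\ell=1$ by computing $G_2^{(r,u)}(u-1)=1-(r-2)(u-1)$; and both handle $\ell\ge 2$ through the Feng--Li estimate $\lambda_{2\ell}>u-2+2\sqrt{q}\cos(\pi/(2\ell))$. (Like the paper's own proof, you read the hypothesis as $\tau_2\le u-2+2\sqrt q-(2\sqrt q-1)/\ell$; the ``at least'' in the statement is evidently a typo.) The one place you diverge is the verification of $2\sqrt{q}\bigl(1-\cos(\pi/(2\ell))\bigr)<(2\sqrt{q}-1)/\ell$ for $\ell\ge 2$: the paper analyzes $f(\ell)=2\sqrt{q}(\cos(\pi/(2\ell))-1+1/\ell)-1/\ell$ by showing its derivative is negative for $\ell\ge 5$, using $f(\ell)\to 0$, and checking $\ell=2,3,4$ by hand, whereas you apply $1-\cos x\le x^2/2$ to reduce everything to $\pi^2/(4\ell)\le 2-1/\sqrt{q}$, which holds strictly because $\pi^2/8<2-1/\sqrt{2}$ when $\ell\ge 2$ and $q\ge 2$. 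Your version is shorter, avoids the calculus and the case analysis, and yields the strictness needed for the equality discussion in one stroke; the equality analysis itself (forcing $d=2\ell$ and $c=1$, hence $\theta=\lambda_{2\ell}$, which occurs only for $(3,2,1)$ and gives the Petersen graph via $\T(3,2,2,1)$) matches the paper.
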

\begin{proof}
Recall that $\lambda_j$ is the largest zero of $G_j^{(r,u)}(x)$. 
For $(r,u,\ell)=(3,2,1)$, we have $\tau_2 \leq 1$ from our assumption. 
Actually $\tau_2=1$ and $v\leq 1+\sum_{j=0}^{1} kq^j=10$ by Theorem~\ref{thm:largest_graph}. Equality is attained by the Petersen graph. 
It is enough to prove that $u-2 +2 \sqrt{q}-(2\sqrt{q}-1)/\ell < \lambda_{2\ell}$ except for $(r,u,\ell)=(3,2,1)$ by Theorem~\ref{thm:largest_graph}. 
For $\ell =1$, we have $u-2 +2 \sqrt{q}-(2\sqrt{q}-1)/\ell = u-1$.  Since $G_2^{(r,u)}(u-1)=-(r-2)(u-1)+1<0$ for $(r,u)\ne (3,2)$, we have $u-1<\lambda_2$. 
From $\lambda_{2 \ell} >u-2+ 2\sqrt{q} \cos (\pi/2\ell)$ \cite{FL96},  
we prove $f(\ell)=u-2+ 2\sqrt{q} \cos (\pi/2\ell)-(u-2 +2 \sqrt{q}-(2\sqrt{q}-1)/\ell)=2\sqrt{q}(\cos (\pi/2\ell)-1+1/\ell)-1/\ell>0$ for $\ell \geq 2$. 
The derivative satisfies 
\[
\frac{df}{d \ell} =
2 \sqrt{q} \left( \frac{\pi}{2 \ell^2} \sin \frac{\pi}{2\ell}- \frac{1}{\ell^2} \right)+\frac{1}{\ell^2}
<  2 \sqrt{q} \left( \frac{\pi}{2 \ell^2} \sin \frac{\pi}{2\ell}- \frac{1}{2 \ell^2} \right)<0
\]
for $\ell \geq 5$. Since $f(\ell)\geq 2\sqrt{2} (\cos(\pi /2 \ell)-1+1/\ell)-1/\ell >0$ for $2 \leq \ell  \leq 4$ and $\lim_{\ell \to \infty} f(\ell)=0$, it follows that $f(\ell )>0$ for $\ell \geq  2$. 
\end{proof}  
Dinitz, Schapira, and Shahaf \cite{DSS} improved the Moore bound from 
a certain condition of non-trivial eigenvalues as follows. 
\begin{theorem}[Dinitz--Schapira--Shahaf \cite{DSS}] \label{thm:DSS}
Let $\Gamma$ be a connected $r$-regular graph of diameter $d$ with $n$ vertices.
Then, for each non-trivial eigenvalue $\tau$ of $\Gamma$:
$$
|G_{d}^{(r,2)}(\tau)|\leq 1+\sum_{i=0}^{d-1}r(r-1)^i-n=G_d^{(r,2)}(r)-n.
$$
\end{theorem}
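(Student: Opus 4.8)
The plan is to recast $G_d^{(r,2)}(\tau)$ as an eigenvalue of a single explicit matrix with nonnegative entries, and then to bound it by that matrix's spectral radius, which will come out to be exactly $G_d^{(r,2)}(r)-n$. Write $\A$ for the adjacency matrix of $\Gamma$ and consider $M := G_d^{(r,2)}(\A)-\J$. The first and most important step is to show that every entry of $M$ is nonnegative. By Theorem~\ref{thm:num_non-back} with $u=2$, the $(x,y)$-entry of $G_d^{(r,2)}(\A)=\sum_{j=0}^{d}F_j^{(r,2)}(\A)$ equals the total number of non-backtracking walks from $x$ to $y$ of lengths $0,1,\dots,d$. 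Because $\Gamma$ has diameter $d$, every ordered pair $(x,y)$ is joined by a geodesic of length $d(x,y)\le d$, and in a simple graph a geodesic is a non-backtracking walk; hence the term of length $d(x,y)$ contributes at least $1$, while all remaining terms are nonnegative by Corollary~\ref{coro:non-negativeA}. Thus $G_d^{(r,2)}(\A)\ge \J$ entrywise, i.e. $M\ge 0$ entrywise.

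Next I would read off the spectrum of $M$. Since $\Gamma$ is connected and $r$-regular, $\A$ commutes with $\J$ and both fix the all-ones vector $\mathbf{1}$, with $\A\mathbf{1}=r\mathbf{1}$ and $\J\mathbf{1}=n\mathbf{1}$; moreover the eigenvectors for the non-trivial eigenvalues $\tau\ne r$ are orthogonal to $\mathbf{1}$, on which $\J$ acts as $0$. Diagonalizing $\A$ and $\J$ simultaneously therefore shows that $M$ has the eigenvalue $G_d^{(r,2)}(r)-n$ on $\mathbf{1}$ and the eigenvalue $G_d^{(r,2)}(\tau)$ on each non-trivial eigenspace.

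Finally I would invoke Perron--Frobenius. The matrix $M$ is real symmetric with nonnegative entries and has the strictly positive eigenvector $\mathbf{1}$, so the associated eigenvalue is the spectral radius. Concretely, for any eigenpair $(\lambda,v)$ of $M$ the triangle inequality gives $|\lambda|\,|v|\le M|v|$ entrywise, where $|v|$ is the vector of absolute values of the entries of $v$; pairing on the left with ${}^t\mathbf{1}$ and using ${}^t\mathbf{1}\,M=(G_d^{(r,2)}(r)-n)\,{}^t\mathbf{1}$ (by symmetry of $M$) gives $|\lambda|\,({}^t\mathbf{1}\,|v|)\le (G_d^{(r,2)}(r)-n)\,({}^t\mathbf{1}\,|v|)$, and dividing by ${}^t\mathbf{1}\,|v|>0$ yields $|\lambda|\le G_d^{(r,2)}(r)-n$. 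Taking $v=\mathbf{1}$ re-derives the Moore bound $n\le G_d^{(r,2)}(r)$ (so the right-hand side is genuinely nonnegative), and taking $\lambda=G_d^{(r,2)}(\tau)$ gives the desired inequality.

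The only delicate point is the entrywise bound $G_d^{(r,2)}(\A)\ge\J$ of the first paragraph: this is exactly where the diameter hypothesis is used, since it guarantees a geodesic of length at most $d$ between every pair, and where Corollary~\ref{coro:non-negativeA} is needed to discard the contributions of the other walk lengths. Everything after that is standard spectral bookkeeping for regular graphs, so I expect Step~1 to be the crux and the rest to be routine.
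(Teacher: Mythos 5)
Your argument is correct and is essentially the same as the paper's proof of the hypergraph generalization (Theorem~\ref{thm:DSSgen}) specialized to $u=2$: form $G_d^{(r,2)}(\A)-\J$, observe it is entrywise nonnegative, and apply Perron--Frobenius at the all-ones eigenvector. The only difference is that you spell out the nonnegativity via the non-backtracking walk count of Theorem~\ref{thm:num_non-back} and the diameter hypothesis, a detail the paper leaves implicit.
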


\begin{corollary}[Dinitz--Schapira--Shahaf \cite{DSS}]  \label{coro:DSS}
If $\Gamma$ is an $r$-regular graph of diameter $2$ with $n$ vertices, then the second eigenvalue $\tau_2$ satisfies
\begin{equation} \label{eq:111} 
\tau_2 \leq \frac{-1+\sqrt{1+4(r^2+r-n)}}{2},
\end{equation}
or equivalently
\[
n \leq r^2+r-\tau_2^2-\tau_2=G_2^{(r,2)}(r)-G_2^{(r,2)}(\tau_2). 
\]
\end{corollary}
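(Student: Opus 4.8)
The plan is to derive the corollary as the special case $d=2$ of Theorem~\ref{thm:DSS}, combined with an explicit evaluation of $G_2^{(r,2)}$ and a routine solution of a quadratic inequality. There is no deep obstacle here: the corollary is an immediate specialization of the cited theorem, so the work is bookkeeping.

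First I would compute the relevant polynomial explicitly. For $u=2$ we have $u-2=0$ and $r(u-1)=r$, so the defining relations give $F_0^{(r,2)}(x)=1$, $F_1^{(r,2)}(x)=x$, and $F_2^{(r,2)}(x)=x^2-r$. Summing these yields
\[
G_2^{(r,2)}(x)=F_0^{(r,2)}(x)+F_1^{(r,2)}(x)+F_2^{(r,2)}(x)=x^2+x+1-r,
\]
so in particular $G_2^{(r,2)}(r)=r^2+1$, which matches the quantity $1+\sum_{i=0}^{1}r(r-1)^i$ appearing in Theorem~\ref{thm:DSS}.

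Next I would invoke Theorem~\ref{thm:DSS} with $d=2$, applied to the eigenvalue $\tau=\tau_2$. Here $\tau_2$ qualifies as a non-trivial eigenvalue, since for a connected $r$-regular graph the only trivial eigenvalue is $\tau_0=r$, which is simple. The theorem then gives $|G_2^{(r,2)}(\tau_2)|\leq G_2^{(r,2)}(r)-n$. I only need the upper side of this absolute value, namely $G_2^{(r,2)}(\tau_2)\leq G_2^{(r,2)}(r)-n$, which rearranges to $n\leq G_2^{(r,2)}(r)-G_2^{(r,2)}(\tau_2)$. Substituting the explicit polynomial gives
\[
n\leq (r^2+1)-(\tau_2^2+\tau_2+1-r)=r^2+r-\tau_2^2-\tau_2,
\]
which is exactly the second displayed inequality of the corollary.

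Finally, to obtain the equivalent first form I would read $n\leq r^2+r-\tau_2^2-\tau_2$ as the quadratic inequality $\tau_2^2+\tau_2-(r^2+r-n)\leq 0$ in the variable $\tau_2$. Its roots are $\tfrac{1}{2}(-1\pm\sqrt{1+4(r^2+r-n)})$, so any $\tau_2$ satisfying the inequality lies at most at the larger root, giving precisely $\tau_2\leq \tfrac{1}{2}(-1+\sqrt{1+4(r^2+r-n)})$. The only points warranting care are confirming that $\tau_2$ counts as a non-trivial eigenvalue and that just one side of the absolute-value bound in Theorem~\ref{thm:DSS} is required; the passage between the two displayed forms is merely the quadratic formula.
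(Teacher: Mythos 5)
Your proposal is correct and follows exactly the route the paper intends: the corollary is the $d=2$ specialization of Theorem~\ref{thm:DSS} (the paper leaves this implicit, giving no separate proof), and your explicit computation of $G_2^{(r,2)}(x)=x^2+x+1-r$, the one-sided use of the absolute value, and the quadratic-formula rearrangement are all accurate. No gaps.
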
 
Note that the inequality \eqref{eq:111} that is in Theorem 2 \cite{DSS} has a typo at $-1$.  For Corollary~\ref{coro:DSS}, if $G_2^{(r,2)}(\tau_2)<0$ holds, then the bound is worse than the Moore bound $n\leq G_2^{(r,2)}(r)$. Our next results extends Theorem \ref{thm:DSS} to hypergraphs.
\begin{theorem}\label{thm:DSSgen}
Let $H$ be an $r$-regular $u$-uniform hypergraph of diameter $d$ with $n$ vertices.
Then, each non-trivial eigenvalue $\lambda$ of $H$ satisfies 
\[
\left| G_d^{(r,u)}(\lambda) \right|
\leq 1+\sum_{i=0}^{d-1} kq^i- n,
\]
where $k=r(u-1)$ and $q=(r-1)(u-1)$.
\end{theorem}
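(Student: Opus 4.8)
The plan is to introduce the matrix $M = G_d^{(r,u)}(\A) - \J$, where $\A$ is the adjacency matrix of $H$ and $\J$ is the all-ones matrix, and to show that $M$ is a nonnegative symmetric matrix whose largest eigenvalue equals exactly $G_d^{(r,u)}(k) - n$, attained on the constant vector. Since $G_d^{(r,u)}(k) = 1 + \sum_{i=0}^{d-1} kq^i$, the desired inequality is then just the statement that every other eigenvalue of $M$ is bounded in absolute value by this largest one, which is a standard Perron--Frobenius estimate.

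First I would establish that $M$ is entrywise nonnegative. Since $G_d^{(r,u)}(\A) = \sum_{i=0}^d F_i^{(r,u)}(\A)$, Theorem~\ref{thm:num_non-back} shows that the $(x,y)$-entry of $G_d^{(r,u)}(\A)$ counts non-backtracking walks of length at most $d$ from $x$ to $y$, and in particular is nonnegative by Corollary~\ref{coro:non-negativeA}. Because $H$ has diameter $d$, any two distinct vertices $x,y$ are joined by a shortest walk of length $d(x,y) \le d$; such a geodesic cannot backtrack, since a backtracking step could be deleted or replaced by a single shorter step, so it is counted and forces $(G_d^{(r,u)}(\A))_{xy} \ge 1$. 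The diagonal entries are at least $1$ as well, since $F_0^{(r,u)}(\A) = \I$ contributes the trivial walk. Hence every entry of $G_d^{(r,u)}(\A)$ is at least $1$, i.e. $M = G_d^{(r,u)}(\A) - \J \ge 0$ entrywise.

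Next I would locate the eigenvalues of $M$ using the spectral decomposition $\A = \sum_j \tau_j \E_j$ with $\tau_0 = k$ and $\E_0 = \J/n$. The constant vector $\mathbf 1$ satisfies $\A \mathbf 1 = k\mathbf 1$, hence $G_d^{(r,u)}(\A)\mathbf 1 = G_d^{(r,u)}(k)\mathbf 1$ and $\J \mathbf 1 = n \mathbf 1$, so $M\mathbf 1 = (G_d^{(r,u)}(k) - n)\mathbf 1$, which is precisely the claimed right-hand side. On the orthogonal complement $\mathbf 1^\perp$ one has $\J v = 0$, so $M$ acts there as $G_d^{(r,u)}(\A)$, and its eigenvalues on this space are exactly the numbers $G_d^{(r,u)}(\lambda)$ as $\lambda$ ranges over the non-trivial eigenvalues of $H$ (using that the $k$-eigenspace of the connected, hence irreducible, matrix $\A$ is just $\mathrm{span}(\mathbf 1)$).

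Finally I would invoke Perron--Frobenius: since $M \ge 0$, its spectral radius $\rho(M)$ is an eigenvalue with a nonnegative eigenvector $w$. Because $M$ is symmetric and $\mathbf 1 > 0$, the identity $(G_d^{(r,u)}(k)-n)\langle \mathbf 1, w\rangle = \langle M\mathbf 1, w\rangle = \langle \mathbf 1, Mw\rangle = \rho(M)\langle \mathbf 1, w\rangle$, together with $\langle \mathbf 1, w\rangle = \sum_i w_i > 0$, forces $G_d^{(r,u)}(k) - n = \rho(M)$. Consequently $|G_d^{(r,u)}(\lambda)| \le \rho(M) = G_d^{(r,u)}(k) - n = 1 + \sum_{i=0}^{d-1}kq^i - n$ for every non-trivial eigenvalue $\lambda$, which is the assertion. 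I expect the only delicate point to be the geodesic-is-non-backtracking argument guaranteeing $M \ge 0$; once that is in place, the identification of the top eigenvalue with the positive eigenvector $\mathbf 1$ and the resulting bound are routine.
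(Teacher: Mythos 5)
Your proposal is correct and follows essentially the same route as the paper: form $G_d^{(r,u)}(\A)-\J$, note it is entrywise non-negative, and apply Perron--Frobenius with $\mathbf 1$ as the Perron eigenvector. In fact you supply more detail than the paper does, which simply asserts the non-negativity that you justify via the geodesic-is-non-backtracking argument.
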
 
\begin{proof}
The adjacency matrix $\A$ of $H$ can be expressed by the spectral decomposition 
$k\E_k+ \sum_{\lambda \in \Lambda(H)}\lambda \E_{\lambda}$, where $\Lambda(H)$ be the set of the 
non-trivial eigenvalues of $H$ and $\E_k=(1/n)\J $. 
Then, the matrix 
\[
G_d^{(r,u)}(\A)-\J=(G_d^{(r,u)}(k)-n)\E_k+ \sum_{\lambda \in \Lambda(H)}G_d^{(r,u)}(\lambda) \E_{\lambda}
\] 
is non-negative. By the Perron--Frobenius theorem, one has $|G_d^{(r,u)}(\lambda)|\leq G_d^{(r,u)}(k)-n=1+\sum_{i=0}^{d-1} kq^i-n$. 
\end{proof}
Theorem \ref{thm:DSSgen} implies the upper bound $n\leq G_d^{(r,u)}(k) -| G_d^{(r,u)}(\lambda) |$ for any eigenvalue $\lambda$ of $H$. When $\lambda=\tau_2(H)$, we can improve the bound as follows. 
\begin{theorem}\label{coro:imp2}
Let $H$ be an $r$-regular $u$-uniform hypergraph of diameter $d$ with $n$ vertices.
Let $\tau_2$ be the second eigenvalue of $H$, and $\lambda_i$ be the largest zero of $G_i^{(r,u)}(x)$ for $i\geq 0$. 
Let $k=r(u-1)$ and $q=(r-1)(u-1)$. If $\tau_2 \geq \lambda_d$, then
\[
n \leq G_d^{(r,u)}(k)-G_d^{(r,u)}(\tau_2),
\]
where $G_d^{(r,u)}(\tau_2)\geq 0$. 
If $\lambda_{d-1}<\tau_2< \lambda_d$, then
\[
n \leq 1+\sum_{j=0}^{d-2} kq^j+ \frac{kq^{d-1}}{c} \leq G_d^{(r,u)}(k)+G_d^{(r,u)}(\tau_2),
\]
where  
$c=-F_{d}^{(r,u)}(\tau_2)/G_{d-1}^{(r,u)}(\tau_2)$ and $G_d^{(r,u)}(\tau_2)<0$. If $\tau_2\leq \lambda_{d-1}$ holds, then $n \leq 1+\sum_{j=0}^{d-2} kq^j$. 
Moreover, if $\lambda_{d-1}<\tau_2< \lambda_d$ and $q\geq 6$, then
$ 1+\sum_{j=0}^{d-2} kq^j+ \frac{kq^{d-1}}{c} < G_d^{(r,u)}(k)+G_d^{(r,u)}(\tau_2)
$. 
\end{theorem}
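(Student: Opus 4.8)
The plan is to split on the position of $\tau_2$ relative to the zeros $\lambda_i$ exactly as in the statement, and in each range to reduce the assertion to a bound already available, the one new ingredient being a Perron--Frobenius estimate on $F_d^{(r,u)}(\A)$. For $\tau_2\ge \lambda_d$: since $G_d^{(r,u)}$ is monic of degree $d$ with largest zero $\lambda_d$, we have $G_d^{(r,u)}(\tau_2)\ge 0$, so $|G_d^{(r,u)}(\tau_2)|=G_d^{(r,u)}(\tau_2)$; feeding $\lambda=\tau_2$ into Theorem~\ref{thm:DSSgen} gives $G_d^{(r,u)}(\tau_2)\le G_d^{(r,u)}(k)-n$, which is the first displayed bound. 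For $\tau_2\le\lambda_{d-1}$: choose the index $d'\le d-1$ with $\lambda_{d'-1}<\tau_2\le\lambda_{d'}$ and apply Theorem~\ref{thm:largest_graph} with $\theta=\tau_2$; since the corresponding constant $c'$ satisfies $c'\ge 1$, the right-hand side is at most $1+\sum_{j=0}^{d'-1}kq^j\le 1+\sum_{j=0}^{d-2}kq^j$, as claimed.

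The interesting range is $\lambda_{d-1}<\tau_2<\lambda_d$. Since $G_{d-1}^{(r,u)}$ and $G_d^{(r,u)}$ are consecutive orthogonal polynomials their zeros interlace, whence $G_{d-1}^{(r,u)}(\tau_2)>0$ and $G_d^{(r,u)}(\tau_2)<0$. The left inequality $n\le 1+\sum_{j=0}^{d-2}kq^j+kq^{d-1}/c$ is precisely Theorem~\ref{thm:largest_graph} applied with $\theta=\tau_2$. For the right inequality I would use $G_d^{(r,u)}(k)=1+\sum_{j=0}^{d-1}kq^j$ together with $F_d^{(r,u)}=G_d^{(r,u)}-G_{d-1}^{(r,u)}$, which turns $c=-F_d^{(r,u)}(\tau_2)/G_{d-1}^{(r,u)}(\tau_2)$ into $c=1-G_d^{(r,u)}(\tau_2)/G_{d-1}^{(r,u)}(\tau_2)$; a short manipulation then reduces the claim $1+\sum_{j=0}^{d-2}kq^j+kq^{d-1}/c\le G_d^{(r,u)}(k)+G_d^{(r,u)}(\tau_2)$ to the single inequality $F_d^{(r,u)}(\tau_2)\ge -kq^{d-1}=-F_d^{(r,u)}(k)$, with strictness in one equivalent to strictness in the other.

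The estimate $|F_d^{(r,u)}(\tau_2)|\le F_d^{(r,u)}(k)$ then comes from Perron--Frobenius: by Corollary~\ref{coro:non-negativeA} the matrix $F_d^{(r,u)}(\A)$ is entrywise non-negative, and since $F_d^{(r,u)}(\A)\mathbf{1}=F_d^{(r,u)}(k)\mathbf{1}$ all of its row sums equal $kq^{d-1}$, so its spectral radius is $kq^{d-1}$ and every eigenvalue $F_d^{(r,u)}(\tau_2)$ lies in $[-kq^{d-1},kq^{d-1}]$. This yields the non-strict right inequality in the middle range.

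The hard part is the last assertion, the strict inequality for $q\ge 6$, since Perron--Frobenius alone permits equality. Here I would pass to the Chebyshev representation: writing $x=u-2+2\sqrt q\cos\phi$ and solving the recurrence \eqref{eq:3-term} gives $F_d^{(r,u)}(x)=q^{(d-1)/2}\bigl(\sqrt q\,\sin((d+1)\phi)+(u-2)\sin(d\phi)\bigr)/\sin\phi$, and the bounds $\lambda_{j}>u-2+2\sqrt q\cos(\pi/j)$ from \cite{FL96} place $\tau_2$ inside $I_{r,u}$ with $\phi<\pi/(d-1)$. Using $|\sin(m\phi)/\sin\phi|\le m$ gives $|F_d^{(r,u)}(\tau_2)|\le q^{(d-1)/2}\bigl(\sqrt q\,(d+1)+(u-2)d\bigr)$, and I expect this to be strictly smaller than $kq^{d-1}=F_d^{(r,u)}(k)$ once $q\ge 6$, because the latter grows exponentially in $d$ while the bound grows only linearly; the finitely many small values of $d$ (notably $d=2$, handled by minimizing the quadratic $F_2^{(r,u)}$ directly) are checked by hand. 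Making this exponential-versus-linear comparison uniform over all admissible $(r,u)$ under the hypothesis $q\ge 6$ is the main technical obstacle.
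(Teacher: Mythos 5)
Everything up to the last claim is correct and follows the paper's own route: Theorem~\ref{thm:DSSgen} for $\tau_2\ge\lambda_d$, Theorem~\ref{thm:largest_graph} for the other two ranges, the algebraic reduction of the middle-range inequality to $|F_d^{(r,u)}(\tau_2)|\le kq^{d-1}$ (with strictness equivalent to strictness, since $G_d^{(r,u)}(\tau_2)<0$), and the Perron--Frobenius argument via Corollary~\ref{coro:non-negativeA}. The problem is the final assertion, the strict inequality $|F_d^{(r,u)}(\tau_2)|<kq^{d-1}$ for $q\ge 6$, which you explicitly leave unfinished (``I expect this to be strictly smaller \dots the main technical obstacle''). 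That is a genuine gap, and moreover the route you sketch has a concrete flaw: the closed form $F_d^{(r,u)}(x)=q^{(d-1)/2}\bigl(\sqrt q\,\sin((d+1)\phi)+(u-2)\sin(d\phi)\bigr)/\sin\phi$ is not correct, because the three-term recurrence \eqref{eq:3-term} only holds from $i\ge 2$; indeed $F_2^{(r,u)}(x)=(x-u+2)F_1^{(r,u)}(x)-qF_0^{(r,u)}(x)-(u-1)$, so the solution fitted to $F_0,F_1$ picks up an extra correction term proportional to $(u-1)/q$ (equivalently, the coefficient of $\sin((d+1)\phi)$ must be $k/q$, not $1$). The exponential-versus-linear comparison would still survive with the corrected constants, but you would also need to justify that $\tau_2$ lies in the interval where $\phi$ is real, and to make the bound uniform in $(r,u,d)$ --- none of which is done.

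The paper avoids all of this with a two-line induction on $d$ directly from \eqref{eq:3-term}: using only $|\tau_2-u+2|\le 2\sqrt q$ and $|\tau_2|\le k$, one gets $|F_2^{(r,u)}(\tau_2)|\le 2\sqrt q\,k+k<kq$ and, inductively, $|F_{d+1}^{(r,u)}(\tau_2)|\le 2\sqrt q\cdot kq^{d-1}+q\cdot kq^{d-2}=(2\sqrt q+1)kq^{d-1}<kq^{d}$, the last step being exactly where $q\ge 6>3+2\sqrt 2$ enters. I recommend replacing your Chebyshev sketch with this induction; it is both shorter and free of the initial-condition issue.
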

\begin{proof}
For the case $\lambda \geq  \lambda_d$, 
the assertion is immediate by Theorem~\ref{thm:DSSgen}. 
For the case $\lambda < \lambda_d$, the upper bounds are obtained from Theorem~\ref{thm:largest_graph}. 
We prove that if $\lambda_{d-1}<\tau_2< \lambda_d$ holds, then $ 1+\sum_{j=0}^{d-2} kq^j+ \frac{kq^{d-1}}{c} \leq G_d^{(r,u)}(k)+G_d^{(r,u)}(\tau_2)$, moreover we prove this equality does not happen when $q\geq 6$. 
We can calculate
\begin{align*}
G_d^{(r,u)}(k)+G_d^{(r,u)}(\tau_2)- (1+\sum_{j=0}^{d-2} kq^j+ \frac{kq^{d-1}}{c})&=kq^{d-1}+G_d^{(r,u)}(\tau_2)+\frac{kq^{d-1}G_{d-1}^{(r,u)}(\tau_2)}{F_{d}^{(r,u)}(\tau_2)}\\
&=G_d^{(r,u)}(\tau_2)\left(1+\frac{kq^{d-1}}{F_{d}^{(r,u)}(\tau_2)}\right).
\end{align*}
Since $F_{d}^{(r,u)}(\tau_2)<0$, we would like to show $|F_{d}^{(r,u)}(\tau_2)|\leq kq^{d-1}=F_{d}^{(r,u)}(k)$. 

Let $\A$ be the adjacency matrix of $H$. 
By Corollary~\ref{coro:non-negativeA}, 
each entry of $F_{d}^{(r,u)}(\A)$ is non-negative, and hence 
$|F_{d}^{(r,u)}(\tau_2)|\leq F_{d}^{(r,u)}(k)$ by Perron--Frobenius theorem. 
For $q\geq 6$, we prove  $|F_{d}^{(r,u)}(\tau_2)|< kq^{d-1}$ by induction on $d$. 
For $d=1$, $|F_{1}^{(r,u)}(\tau_2)|=|\tau_2| < k$. 
For $d=2$, $|F_{2}^{(r,u)}(\tau_2)|\leq |\tau_2(\tau_2-u+2)|+|k| \leq 2 \sqrt{q}k+k< k q$ for $q\geq 6$. 
For $d\geq 2$, suppose $|F_{i}^{(r,u)}(\tau_2)|< kq^{i-1}$ for each $i \in \{1,\ldots,d\}$. 
Then it follows that 
\begin{align*}
F_{d+1}^{(r,u)}(\tau_2)&=(\tau_2-u+2)F_{d}^{(r,u)}(\tau_2)-qF_{d-1}^{(r,u)}(\tau_2)\\
&\leq|\tau_2-u+2|\cdot |F_{d}^{(r,u)}(\tau_2)|+|qF_{d-1}^{(r,u)}(\tau_2)|\\
&< 2\sqrt{q} k q^{d-1}+ kq^{d-1}<kq^{d}
\end{align*}
for $q\geq 6$. Therefore the theorem follows. 
\end{proof}
An $r$-regular $u$-uniform hypergraph of diameter $d$ has a natural upper bound $n \leq 1+\sum_{j=0}^{d-1} kq^j$ on the order $n$. The difference $(1+\sum_{j=0}^{d-1} kq^j)-n$ is called the {\it defect} of a hypergraph. 
The following is immediate from Theorem \ref{coro:imp2}.

\begin{corollary}\label{coro:defect}
Let $H$ be an $r$-regular $u$-uniform hypergraph of diameter $d$ with $n$ vertices.
Let $\tau_2$ be the second eigenvalue of $H$, and $\lambda_i$ be the largest zero of $G_i^{(r,u)}(x)$. 
Let $e$  be the defect of $H$. Then it follows that 
\begin{enumerate}
\item $e\geq G_d^{(r,u)}(\tau_2)$ if $\tau_2 \geq \lambda_d$,  \label{en:1}
\item $e\geq kq^{d-1}G_{d}^{(r,u)}(\tau_2)/F_{d}^{(r,u)}(\tau_2)$ if $\lambda_{d-1}<\tau_2< \lambda_d$, \label{en:2}
\item $e\geq kq^{d-1}$ if $\tau_2\leq \lambda_{d-1}$. 
\end{enumerate}
\end{corollary}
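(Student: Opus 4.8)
The plan is to read off all three inequalities directly from the three cases of Theorem~\ref{coro:imp2}, after first recording that the defect is exactly $e = G_d^{(r,u)}(k) - n$. I would verify this identity at the outset: since $F_j^{(r,u)}(k) = kq^{j-1}$ for $j \geq 1$ and $F_0^{(r,u)}(k) = 1$, summing gives $G_d^{(r,u)}(k) = \sum_{j=0}^d F_j^{(r,u)}(k) = 1 + \sum_{j=0}^{d-1} kq^j$, which is precisely the natural Moore-type upper bound whose gap to $n$ defines the defect $e$.

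With this identity in hand, cases \eqref{en:1} and the third item are immediate. For $\tau_2 \geq \lambda_d$, Theorem~\ref{coro:imp2} gives $n \leq G_d^{(r,u)}(k) - G_d^{(r,u)}(\tau_2)$, so $e = G_d^{(r,u)}(k) - n \geq G_d^{(r,u)}(\tau_2)$. For $\tau_2 \leq \lambda_{d-1}$, the bound $n \leq 1 + \sum_{j=0}^{d-2} kq^j$ yields $e \geq G_d^{(r,u)}(k) - \left(1 + \sum_{j=0}^{d-2} kq^j\right) = kq^{d-1}$.

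The only genuine computation is case \eqref{en:2}. Starting from $n \leq 1 + \sum_{j=0}^{d-2} kq^j + kq^{d-1}/c$, I would subtract from $G_d^{(r,u)}(k)$ to obtain $e \geq kq^{d-1}(1 - 1/c)$, and then substitute $c = -F_d^{(r,u)}(\tau_2)/G_{d-1}^{(r,u)}(\tau_2)$ together with the defining relation $G_d^{(r,u)} = G_{d-1}^{(r,u)} + F_d^{(r,u)}$ to simplify
\[
1 - \frac{1}{c} = \frac{F_d^{(r,u)}(\tau_2) + G_{d-1}^{(r,u)}(\tau_2)}{F_d^{(r,u)}(\tau_2)} = \frac{G_d^{(r,u)}(\tau_2)}{F_d^{(r,u)}(\tau_2)},
\]
which gives the claimed $e \geq kq^{d-1} G_d^{(r,u)}(\tau_2)/F_d^{(r,u)}(\tau_2)$.

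There is no real obstacle here, since the corollary is essentially a restatement of Theorem~\ref{coro:imp2} in terms of the defect; the one point worth checking is that the right-hand side in case \eqref{en:2} is nonnegative, so that the bound is meaningful. This holds because $\lambda_{d-1} < \tau_2 < \lambda_d$ forces $G_d^{(r,u)}(\tau_2) < 0$, while $F_d^{(r,u)}(\tau_2) = -c\,G_{d-1}^{(r,u)}(\tau_2) < 0$ (using $c > 0$ and $G_{d-1}^{(r,u)}(\tau_2) > 0$ as established in the proof of Theorem~\ref{thm:largest_graph}), so their ratio is positive.
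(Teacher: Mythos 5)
Your proposal is correct and matches the paper, which simply declares the corollary immediate from Theorem~\ref{coro:imp2}; you have filled in exactly the intended details, namely the identity $e = G_d^{(r,u)}(k) - n$ and the algebraic simplification $kq^{d-1}(1 - 1/c) = kq^{d-1}G_d^{(r,u)}(\tau_2)/F_d^{(r,u)}(\tau_2)$ in case (2). All three computations and the sign check are accurate.
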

Informally, the corollary says that if $|\lambda_d-\tau_2|$ is large, then the defect becomes large. For the case of diameter $d=2$, the inequalities   \eqref{en:2} and \eqref{en:1} in Corollary~\ref{coro:defect} can be expressed by 
\begin{equation}
\tau_2 \geq \frac{u-2-K+\sqrt{(u-K)^2+4q}}{2} \label{eq:lower_2}
\end{equation}
and
\begin{equation}
\tau_2 \leq  \frac{u-3+\sqrt{(u-1)^2+4q+4e}}{2} , \label{eq:upper_2}
\end{equation}
respectively, where $K=kq/(kq-e)$. If the defect is smaller than given value $e<kq^{d-1}$, then $\tau_2$ must be in the region between the values of \eqref{eq:lower_2} and \eqref{eq:upper_2}. Table~\ref{tb:1-0} shows the order $v$ and the defect $e$ for several largest known $r$-regular graphs of diameter $d$, and the region of the second eigenvalue $\tau_2$. 
The value $\lambda_2$ in Table \ref{tb:1-0} represents the second eigenvalue of a $r$-regular Moore graph of diameter $d$. 
Another interpretation of these results is for example, that if $(r,d)=(8,2)$, then any $8$-regular graph $H$ whose second eigenvalue $\tau_2(H)$ is either less than $2.09503$ or larger than $3.40512$, must have less than $1+\sum_{i=0}^{d-1}r(r-1)^i-e=65-8=57$ vertices. 

\begin{table}
\begin{center}

\quad 

\begin{tabular}{|c||c|c|c|c|c|} \hline
$(r,d)$ & $v$ & $e$ & \eqref{eq:lower_2} & $\lambda_2$ & \eqref{eq:upper_2} \\ \hline \hline 
(8,2)&57&8 &2.09503& 2.19258& 3.40512 \\ \hline
(9,2)&74&8&2.29956&2.37228& 3.53113 \\ \hline
(10,2)&91&10 &2.46923& 2.54138& 3.88748 \\ \hline
(4,3) & 41 &12&2.11232& 2.25342& 2.88396 \\ \hline
(5,3) &72 &34 &2.42905& 2.62620& 3.77862 \\ \hline
(4,4) &98 &63 &2.53756& 2.69963& 3.44307 \\ \hline
(5,4) &212&214 &2.91829& 3.12941& 4.41922 \\ \hline 
(3,5) & 70&24&2.32340& 2.39309& 2.64401 \\ \hline 
(4,5) & 364 &121&2.89153& 2.93996& 3.42069 \\ \hline
(3,6) & 132 &58&2.45777& 2.51283& 2.75001 \\ \hline
(4,6)& 740 &717&3.00233& 3.08314& 3.73149 \\ \hline
\end{tabular}
\end{center}
\caption{Regions of $\tau_2$ for given defect} \label{tb:1-0}
\end{table}

\section{Exact values on $h(r,u,\theta)$} \label{7}

Let $h(r,u,\theta)$ ({\it resp.} $v(k,\theta)$, $b(k,\theta)$) be the largest possible order of an $r$-regular $u$-uniform hypergraph ({\it resp.} $k$-regular graph, $k$-regular bipartite graph) whose second eigenvalue is at most $\theta$. In this section, we determine several exact values of $h(r,u,\theta)$, $v(k,\theta)$ and $b(k,\theta)$. We collect these results for small values of $r,u,\theta$ in Tables \ref{tab:u=2} and \ref{tab:u=3} in the Appendix section. We also prove $h(r,u,1)\leq u(r+1)$ if $r\geq \max\{7u-5,u^2-1\}$. 

\begin{lemma} \label{lem:m-s}
For $r,u\geq 2$, $r\cdot h(r,u,\theta)=u\cdot h (u,r,\theta+r-u)$. 
Moreover, a hypergraph $H$ attains $h(r,u,\theta)$ if and only if  the  dual hypergraph $H^*$ attains $h(u,r,\theta+r-u)$.   
\end{lemma}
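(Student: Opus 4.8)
The plan is to exploit the incidence-graph description from Section~\ref{2}, which simultaneously encodes $H$ and its dual $H^*$, together with the elementary counting identity $r|V|=u|E|$ obtained by double-counting incidences. First I would record that, since $|V^*|=|E|$ for the dual hypergraph, this identity gives $|V(H^*)|=(r/u)|V(H)|$, which accounts for the factor $r/u$ in the claimed equation. The eigenvalue bookkeeping comes from \eqref{eq:semi-reg}: the matrices $\M{}^t\M=\A(H)+r\I$ and ${}^t\M\M=\A(H^*)+u\I$ share their nonzero spectrum with multiplicities, and both are positive semidefinite, so every eigenvalue of $H$ lies in $[-r,k]$ and every eigenvalue of $H^*$ in $[-u,k^*]$, where $k=r(u-1)$ and $k^*=u(r-1)=k+r-u$.

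The crux is the second-eigenvalue shift. The common nonzero eigenvalues of $\M{}^t\M$ and ${}^t\M\M$ set up a multiplicity-preserving bijection between the eigenvalues $\nu>-r$ of $H$ and the eigenvalues $\mu>-u$ of $H^*$, given by $\mu=\nu+r-u$; the remaining eigenvalues are $-r$ for $H$ and $-u$ for $H^*$, and the identity $-r+(r-u)=-u$ shows this is consistent with the rule. Under this bijection the Perron eigenvalue $k$ maps to $k^*$, so it remains to control the second-largest eigenvalue. Assuming $H$ is connected with girth at least $3$ (as throughout Sections~\ref{6} and \ref{7}) and $r\ge 2$, one checks that $H$ cannot have spectrum $\{k,-r\}$ only, so $H$ possesses an eigenvalue strictly inside $(-r,k)$ and $\tau_2(H)>-r$. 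Consequently $\tau_2(H)+r-u>-u$, and so $\tau_2(H)+r-u$ is genuinely the second-largest eigenvalue of $H^*$; that is, $\tau_2(H^*)=\tau_2(H)+r-u$, whence $\tau_2(H)\le\theta$ if and only if $\tau_2(H^*)\le\theta+r-u$. The main obstacle is precisely this last point: guaranteeing that a leftover eigenvalue equal to $-r$ (respectively $-u$) cannot overtake the shifted second eigenvalue and corrupt the correspondence. This is exactly where I would invoke $\tau_2(H)>-r$ to rule it out.

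With the shift established, the equality follows by applying duality in both directions, using that $H\mapsto H^*$ is an involution on connected girth-$\ge 3$ regular uniform hypergraphs (both $H$ and $H^*$ share the incidence graph $B$). If $H$ is $r$-regular $u$-uniform with $\tau_2(H)\le\theta$, then $H^*$ is $u$-regular $r$-uniform with $\tau_2(H^*)\le\theta+r-u$ and $|V(H^*)|=(r/u)|V(H)|$, giving $r\cdot h(r,u,\theta)\le u\cdot h(u,r,\theta+r-u)$; applying the identical argument to an extremal hypergraph for $h(u,r,\theta+r-u)$, whose dual is admissible for $h(r,u,\theta)$, yields the reverse inequality, hence equality. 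Finally, since the duality bijection scales orders by exactly $r/u$ and preserves feasibility, a hypergraph $H$ realizes the maximum order $h(r,u,\theta)$ if and only if $H^*$ has order $(r/u)h(r,u,\theta)=h(u,r,\theta+r-u)$, i.e.\ if and only if $H^*$ attains $h(u,r,\theta+r-u)$, which is the moreover statement.
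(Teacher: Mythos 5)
Your argument is correct and follows essentially the same route as the paper: the order ratio comes from the incidence count $r|V|=u|E|$, and the eigenvalue shift $\tau_2(H^*)=\tau_2(H)+r-u$ comes from equation \eqref{eq:semi-reg}, with duality applied in both directions to get equality. The paper states this in two lines; you additionally verify that the leftover eigenvalues $-r$ and $-u$ cannot disturb the second-eigenvalue correspondence, a detail the paper leaves implicit but which your use of girth at least $3$ handles correctly.
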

\begin{proof}
Let $H$ be an $r$-regular $u$-uniform hypergraph with second eigenvalue $\tau_2$. 
From equation~\eqref{eq:semi-reg}, the dual hypergraph $H^*$ has 
second eigenvalue $\tau_2+r-u$. From the equation 
$r|H|=u|H^*|$, this lemma follows.  
\end{proof}
We always suppose $r\geq u$ from Lemma~\ref{lem:m-s} throughout this section. 
For a finite set $V$ of size $v$, a family $B$ of $k$-element subsets of $V$ is a {\it combinatorial $t$-$(v,k,\lambda)$ design} if any $t$-element subset of $V$ is contained in exactly $\lambda$ subsets of $B$. 
Here an element of $V$ is called a {\it point} and an element of $B$ is called a ${\it block}$ of the design. 

\begin{lemma}
Let $k=r(u-1)$. 
There exists a combinatorial $2$-$(k+1,u,1)$ design if and only if there exists an $r$-regular $u$-uniform hypergraph whose second eigenvalue is in $[-1,0)$. 
Moreover, if the design exists, then $h(r,u,\theta)=k+1$ for $-1 \leq \theta <0$. 
\end{lemma}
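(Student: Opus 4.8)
The plan is to show that the hypothesis $\tau_2 \in [-1,0)$ forces the point graph $\Gamma_H$ to be complete, and then to translate this conclusion back and forth between designs and hypergraphs. Throughout this section the hypergraphs have girth at least $3$, so any two vertices lie in at most one common edge; hence $\A_{xy} \in \{0,1\}$, the point graph $\Gamma_H$ is simple, and the $r$ edges through a fixed vertex $x$ meet pairwise only in $x$. Consequently $\Gamma_H$ is $k$-regular with $k = r(u-1)$, its adjacency matrix is exactly $\A$, and the eigenvalues of $\Gamma_H$ coincide with those of $H$.

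First I would prove the reverse implication together with the upper bound. Suppose $H$ is an $r$-regular $u$-uniform hypergraph with $\tau_2 \le \theta < 0$, so that $\tau_2 < 0$. If $\Gamma_H$ were not complete, it would contain two non-adjacent vertices $a,b$; the principal submatrix of $\A$ indexed by $\{a,b\}$ is the $2\times 2$ zero matrix, whose eigenvalues are both $0$. By Cauchy interlacing, $\tau_2(H) = \tau_2(\A) \ge 0$, contradicting $\tau_2 < 0$. Hence $\Gamma_H$ is complete; being $k$-regular it must be $K_{k+1}$, so $H$ has exactly $k+1$ vertices. Now $\A_{xy}=1$ for every pair $x\ne y$ means every pair of points lies in exactly one edge, so $H$ is a $2$-$(k+1,u,1)$ design. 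This simultaneously produces the design and shows that any such hypergraph has order $k+1$, giving $h(r,u,\theta)\le k+1$.

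For the forward implication, I would start from a $2$-$(k+1,u,1)$ design $D$. A standard count shows that each point lies in $(v-1)/(u-1)=k/(u-1)=r$ blocks, so $D$ is an $r$-regular $u$-uniform hypergraph on $k+1$ points; since $\lambda=1$, its adjacency matrix is $\J-\I$, whose eigenvalues are $k$ (simple) and $-1$ (with multiplicity $k$). Thus $\tau_2(D)=-1\in[-1,0)$, which proves existence of a hypergraph with the required second eigenvalue and, for every $\theta\in[-1,0)$, exhibits a hypergraph with $\tau_2=-1\le\theta$, giving $h(r,u,\theta)\ge k+1$. Combining the two bounds yields $h(r,u,\theta)=k+1$.

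The only real content is the interlacing step in the second paragraph: once one knows that $\tau_2<0$ for a $k$-regular simple graph forces completeness, everything else is elementary design-theoretic bookkeeping. I expect no serious obstacle beyond invoking the girth-$3$ hypothesis to guarantee that $\Gamma_H$ is simple and $k$-regular, so that the interlacing conclusion $\Gamma_H=K_{k+1}$ can be read off, and handling the equality range $-1\le\theta<0$ uniformly, including the endpoint $\theta=-1$, where a competitor satisfies $\tau_2\le -1<0$ and the same argument applies.
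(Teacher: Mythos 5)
Your proof is correct and follows essentially the same route as the paper: both reduce the statement to showing that $\tau_2<0$ forces the point graph $\Gamma_H$ to be the complete graph $K_{k+1}$ (using the girth-$3$ convention of the section so that $\Gamma_H$ is simple and $k$-regular), and then identify edges with blocks of a $2$-$(k+1,u,1)$ design and conversely read off the spectrum $\{k,-1\}$ of $\J-\I$. The only difference is that where the paper invokes Smith's classification of connected regular graphs with small second eigenvalue, you derive completeness directly by Cauchy interlacing on a $2\times 2$ zero principal submatrix, which is an equally valid and more self-contained justification of that step.
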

\begin{proof}
If there exists a combinatorial $2$-$(k+1,u,1)$ design, 
then we can obtain an $r$-regular $u$-uniform hypergraph $(V,B)$ with second eigenvalue $-1$ by 
interpreting blocks as hyperedges. 
If there exists an $r$-regular $u$-uniform hypergraph $H$ whose second eigenvalue is in $[-1,0)$, the point graph is the complete $k$-regular graph \cite{S70}. 
Since any two distinct vertices in $H$ are contained in only one edge, the edges can be interpreted as the blocks of the design. 
\end{proof}

\begin{lemma}
If a bipartite $r$-regular graph attaining $b(r,\theta)$ has girth at least 6, then $h(r,r,\theta^2-r)=b(r,\theta)/2$. 
\end{lemma}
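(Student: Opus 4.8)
The plan is to exploit the dictionary from Section~\ref{2} between $r$-regular $u$-uniform hypergraphs and $(r,u)$-biregular bipartite graphs, which in the case $u=r$ specializes to a correspondence between hypergraphs and ordinary $r$-regular bipartite graphs. Given an $r$-regular $r$-uniform hypergraph $H$ on $v$ vertices, counting incidences gives $r|V|=r|E|$, so $|V|=|E|=v$ and the incidence graph $B$ of $H$ is an $r$-regular bipartite graph on $2v$ vertices; conversely, choosing one color class of an $r$-regular bipartite graph as the vertex set and the other as the edge set produces an $r$-regular $r$-uniform hypergraph of half the order. By \eqref{eq:semi-reg} with $u=r$, the eigenvalues of $B$ are exactly the numbers $\pm\sqrt{\lambda+r}$ as $\lambda$ ranges over the eigenvalues of $H$; since $\sqrt{\cdot}$ is increasing and $\A(H)+r\I=\M{}^t\M$ is positive semidefinite, the second largest eigenvalue of $B$ is $\tau_2(B)=\sqrt{\tau_2(H)+r}$. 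Hence $\tau_2(B)\le\theta$ if and only if $\tau_2(H)\le\theta^2-r$, where $\theta\ge 0$ is forced since $\theta^2-r<k$ requires $\theta<r$.

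First I would prove $h(r,r,\theta^2-r)\le b(r,\theta)/2$. Taking a hypergraph $H$ attaining $h(r,r,\theta^2-r)$, its incidence graph $B$ is a simple $r$-regular bipartite graph on $2\,h(r,r,\theta^2-r)$ vertices with $\tau_2(B)=\sqrt{\tau_2(H)+r}\le\theta$, so $B$ competes for $b(r,\theta)$ and the inequality follows; this direction needs no girth hypothesis. For the reverse inequality $h(r,r,\theta^2-r)\ge b(r,\theta)/2$ I would start from a bipartite graph $B$ attaining $b(r,\theta)$, noting that $B$ is connected (a disconnected $r$-regular bipartite graph has $\tau_2=r>\theta$). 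Here the hypothesis enters: since $B$ has girth at least $6$ it has no $4$-cycle, so any two vertices in the chosen ``edge'' class have at most one common neighbor. This guarantees that the associated hypergraph $H$ has distinct edges, that its adjacency matrix is a $0$--$1$ matrix, and that its point graph is simple, i.e. $H$ has girth at least $3$ and therefore lies in the class considered in Sections~\ref{6} and \ref{7}. Then $H$ is an admissible $r$-regular $r$-uniform hypergraph on $b(r,\theta)/2$ vertices with $\tau_2(H)=\tau_2(B)^2-r\le\theta^2-r$, giving $h(r,r,\theta^2-r)\ge b(r,\theta)/2$, and combining the two inequalities yields the claimed equality.

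The main obstacle is precisely the legitimacy of the hypergraph produced in the reverse direction, and this is exactly what the girth hypothesis controls. A $4$-cycle in $B$ translates into a Berge $2$-cycle in $H$ (two edges meeting in two points), which would turn the point graph into a multigraph and violate the standing girth-$\ge 3$ convention, and it could also force repeated edges when two ``edge'' vertices share their entire neighborhood. Requiring girth at least $6$ rules all of this out and makes the incidence correspondence a bijection between the two extremal classes, so that the extremal bipartite graph transfers to an extremal hypergraph of exactly half the order. The remaining eigenvalue bookkeeping is routine once \eqref{eq:semi-reg} is invoked, the only delicate point being that the second largest eigenvalue of $B$ matches $\tau_2(H)$ rather than some other branch of $\pm\sqrt{\lambda+r}$, which follows from monotonicity of the square root together with $\A(H)+r\I\succeq 0$.
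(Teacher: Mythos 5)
Your proposal is correct and follows essentially the same route as the paper: build the $r$-regular $r$-uniform hypergraph whose incidence/Levi graph is the extremal bipartite graph, use the girth-$6$ hypothesis to ensure the point graph is simple, and transfer the second eigenvalue via the relation $\A(B)^2$ versus $\A(H)+r\I$ from Section~\ref{2}. You spell out both inequalities and the eigenvalue bookkeeping that the paper leaves implicit, but the underlying argument is the same.
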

\begin{proof}
From a bipartite $r$-regular graph $H$ of girth at least $6$ attaining $b(r,\theta)$, we construct an $r$-regular $r$-uniform hypergraph whose incidence/Levi graph is $H$. The rest follows from the results at the end of Section \ref{2}.
\end{proof}

\begin{lemma}\label{lem:count_tri}
Let $H$ be an $r$-regular $u$-uniform hypergraph of order $v$. 
Then $rv/u$ is an integer. 
\end{lemma}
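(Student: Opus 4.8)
The plan is to prove this by a standard double-counting (handshake) argument on the incidence structure, using regularity and uniformity to make the two counts exact. Let $V$ and $E$ be the vertex and edge sets of $H$, with $|V|=v$ and $|E|=m$, and consider the set of incident vertex–edge pairs (flags)
\[
I=\{(x,e)\colon x\in V,\ e\in E,\ x\in e\}.
\]
This is, up to renaming, exactly the edge set $E'$ of the Levi/incidence graph of $H$ introduced in Section~\ref{2}; equivalently, $|I|$ is the number of edges of the $(r,u)$-biregular incidence graph $B$.

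First I would count $|I|$ by summing over the vertices. Since $H$ is $r$-regular, every vertex $x\in V$ lies in exactly $r$ edges, so that $|I|=\sum_{x\in V} r = rv$. Next I would count $|I|$ by summing over the edges. Since $H$ is $u$-uniform, every edge $e\in E$ contains exactly $u$ vertices, so that $|I|=\sum_{e\in E} u = um$. Equating the two expressions gives the identity $rv=um$, which is precisely the relation $r|H|=u|H^*|$ already invoked in the proof of Lemma~\ref{lem:m-s}.

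From $rv=um$ it follows immediately that $rv/u = m = |E|$, and since the number of edges is a non-negative integer we conclude $rv/u\in\mathbb{Z}$, as claimed. There is no genuine obstacle here: the only thing worth emphasizing is that the hypotheses of $r$-regularity and $u$-uniformity are exactly what upgrade the two incidence counts from inequalities to the exact equality $rv=um$, after which integrality of $rv/u$ is simply the statement that $H$ has an integer number of edges.
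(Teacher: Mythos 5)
Your proof is correct and is essentially the paper's own argument, just written out in full: the paper's one-line proof ("the number of edges in $H$ is $rv/u$, which is an integer") is exactly your double count of vertex--edge incidences giving $rv=um$ and hence $rv/u=|E|\in\mathbb{Z}$. No issues.
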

\begin{proof}
The number of edges in $H$ is $rv/u$, which is an integer. 
\end{proof}

The next result shows that it is possible to further improve Theorem \ref{thm:largest_graph} (which gives the upper bound $v(6,2)\leq 47.7$).
\begin{lemma} \label{lem:v(6,2)}
$v(6,2)\leq 45$. 
\end{lemma}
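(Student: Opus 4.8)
The plan is to first pin down the relevant parameters and invoke Theorem~\ref{thm:largest_graph}, then improve the resulting non-integer bound by a feasibility argument on the spectrum. Here $r=6$, $u=2$, so $k=r(u-1)=6$ and $q=(r-1)(u-1)=5$, and the polynomials are $F_0^{(6,2)}=1$, $F_1^{(6,2)}=x$, $F_2^{(6,2)}=x^2-6$, $F_3^{(6,2)}=x^3-11x$, with $G_2^{(6,2)}=x^2+x-5$ and $G_3^{(6,2)}=x^3+x^2-10x-5$. The largest zeros satisfy $\lambda_2=(-1+\sqrt{21})/2\approx 1.7913$ and $\lambda_3\approx 2.958$, so $\theta=2$ lies in $(\lambda_2,\lambda_3)$ and the index in Theorem~\ref{thm:largest_graph} is $d=3$. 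Computing $c=-F_3^{(6,2)}(2)/G_2^{(6,2)}(2)=14$ gives the bound
\[
v(6,2)\le 1+\sum_{j=0}^{1}6\cdot 5^{j}+\frac{6\cdot 25}{14}=37+\frac{75}{7}=47.714\ldots,
\]
hence $v(6,2)\le 47$. The remaining task is to exclude $v\in\{46,47\}$.

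To do this I would return to the proof of Theorem~\ref{thm:lp_bound} with the extremal polynomial $f(x)=g(x)^2/(x-2)$, where $g(x)=14\sum_{i=0}^{2}F_i^{(6,2)}(x)+F_3^{(6,2)}(x)=x^3+14x^2+3x-70$, and extract the exact deficit identity rather than the inequality. Writing the distinct eigenvalues as $6=\theta_0>\theta_1>\cdots>\theta_s$ with multiplicities $m_0=1,m_1,\ldots,m_s$ and $\theta_1\le 2$, the trace computation in that proof yields
\[
f(6)-f_0\,v=\sum_{j=1}^{s}\lvert f(\theta_j)\rvert\,m_j+\sum_{i=1}^{5}f_i\,\tr\!\big(F_i^{(6,2)}(\A)\big),
\]
where every term on the right is nonnegative by $f_i\ge 0$ and Corollary~\ref{coro:non-negativeA} (indeed $\tr(F_i^{(6,2)}(\A))$ counts closed non-backtracking walks by Theorem~\ref{thm:num_non-back}, so it is a nonnegative integer, with $\tr(F_1^{(6,2)}(\A))=\tr(F_2^{(6,2)}(\A))=0$). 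For $v=47$ the left side equals the fixed positive value $f(6)-47f_0$, which constrains the $m_j$ and the walk-counts severely. I would combine this with the spectral moment relations $\sum_{j\ge 1}m_j=v-1$, $\sum_{j\ge 1}\theta_j m_j=-6$, and $\sum_{j\ge 1}\theta_j^{2}m_j=6v-36$ (obtained from $\tr(F_i^{(6,2)}(\A))=F_i^{(6,2)}(6)+\sum_{j\ge1}F_i^{(6,2)}(\theta_j)m_j$ for $i=0,1,2$), together with the bounds $-6\le\theta_s$ and $\theta_1\le 2$. Since any $6$-regular graph on $38$ or more vertices has diameter at least $3$ by the Moore bound, we also know $s\ge 3$.

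The crux is then a finite integer-feasibility check: I would argue that no admissible data $(\theta_1,\ldots,\theta_s;m_1,\ldots,m_s)$ meeting the deficit identity, the three moment equations, the nonnegativity of the higher traces $\tr(F_4^{(6,2)}(\A)),\tr(F_5^{(6,2)}(\A))$, and the eigenvalue interval constraints can exist when $v=47$, and likewise when $v=46$; this forces $v(6,2)\le 45$. The main obstacle is precisely this last step: the deficit $f(6)-47f_0$ is large, so the argument cannot rely on the deficit alone but must exploit the rigidity coming from the integrality of the multiplicities and of the short-cycle counts $\tr(F_3^{(6,2)}(\A))=6\cdot(\#\text{triangles})$ simultaneously. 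I expect the cleanest route is to show that near-equality forces the point graph to be almost distance-regular with intersection array given by the tridiagonal entries of $\T(6,2,3,14)$ (cf.\ Remark~\ref{rem:girth} and Lemma~\ref{lem:T}), and that the corresponding parameters admit no integral realization on $46$ or $47$ vertices—reducing the problem to checking the nonexistence of a handful of candidate feasible spectra.
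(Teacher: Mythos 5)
There is a genuine gap. Your first step is fine: with $r=6$, $u=2$ you correctly compute $k=6$, $q=5$, $d=3$, $c=-F_3^{(6,2)}(2)/G_2^{(6,2)}(2)=14$, and recover the bound $37+75/7\approx 47.7$ from Theorem~\ref{thm:largest_graph}, hence $v(6,2)\leq 47$. But the entire content of the lemma is the passage from $47$ to $45$, and that part of your argument is only a plan, not a proof. The exact deficit identity you write down is correct, but at $v=47$ the slack $f(6)-47f_0$ is large (with your normalization $f_0=14\cdot 167=2338$, the slack is $2338\cdot 5/7=1670$), so there is no ``near-equality'' rigidity to exploit: the hypergraph is nowhere close to the extremal generalized Moore geometry, and Remark~\ref{rem:girth} and Lemma~\ref{lem:T} give you nothing. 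Moreover, the feasibility problem you describe is not a ``finite integer-feasibility check'': the non-trivial eigenvalues $\theta_1,\dots,\theta_s$ range continuously over $[-6,2]$ and $s$ is not bounded by a small constant, so the moment equations plus the deficit identity plus nonnegativity of the walk counts leave an infinite (and, as far as one can tell, nonempty) family of admissible spectral data. You give no argument that rules out $v=46$ or $v=47$, and it is not clear that your constraints suffice to do so.

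The paper's proof is entirely different and much shorter: it applies the LP bound (Theorem~\ref{thm:lp_bound}) directly with a \emph{different}, degree-four polynomial
\[
f(x)=F^{(6,2)}_4(x)+9F^{(6,2)}_3(x)+\tfrac{121}{4}F^{(6,2)}_2(x)+64F^{(6,2)}_1(x)+\tfrac{153}{2},
\]
which has nonnegative $F$-coefficients and is nonpositive on $[-6,2]$, and computes $f(6)/f_0=136/3=45.33\ldots$, whence $v(6,2)\leq 45$ by integrality. In other words, instead of taking the canonical extremal polynomial $g(x)^2/(x-2)$ from Theorem~\ref{thm:largest_graph} and trying to repair its slack by spectral bookkeeping, one simply feeds a better polynomial (found by solving the linear program numerically for degree $4$) into Theorem~\ref{thm:lp_bound}. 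If you want to salvage your approach, the honest fix is to replace your sketched elimination of $46$ and $47$ by exhibiting such a polynomial explicitly and verifying its sign conditions.
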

\begin{proof}
Applying the LP bound for
\[
f(x)=F^{(6,2)}_4(x)+9F^{(6,2)}_3(x)+\frac{121}{4}F^{(6,2)}_2(x)+64F^{(6,2)}_1(x)+\frac{153}{2}F^{(6,2)}_0(x), 
\]
we obtain $v(6,2)\leq 136/3=45.333...$
\end{proof}

\begin{proposition}[{Corollary 3.12.3 in \cite{BCNb}}] \label{prop:min-2}
A connected regular graph whose smallest eigenvalue is greater than $-2$ is a complete graph or an odd cycle. 
\end{proposition}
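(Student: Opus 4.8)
The plan is to classify connected regular graphs with smallest eigenvalue strictly greater than $-2$ using the theory of root lattices and line graphs. The key structural fact I would invoke is the celebrated theorem of Cameron, Goethals, Seidel, and Shult: a connected graph with smallest eigenvalue at least $-2$ is either a generalized line graph or one of finitely many exceptions arising from the root system $E_8$. Since our hypothesis is the \emph{strict} inequality $\lambda_{\min} > -2$, I would first argue that such a graph cannot be one of the exceptional graphs (these all have smallest eigenvalue exactly $-2$) and cannot be a generalized line graph of the relevant type with eigenvalue $-2$, so it must be an honest line graph $L(G)$ of some graph $G$ with $\lambda_{\min}(L(G)) > -2$.

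First I would reduce to the line-graph case. The adjacency matrix of a graph with $\lambda_{\min} \geq -2$ admits a representation by vectors in a root lattice, since $\A + 2\I$ is positive semidefinite; when the inequality is strict, $\A + 2\I$ is positive \emph{definite}, which forces the Gram representation to have full rank and rules out the degenerate configurations. The cleanest route, however, is to use the classification directly: the only connected graphs with $\lambda_{\min} > -2$ are the complete graphs $K_n$ and the odd cycles $C_{2m+1}$, together with paths, but regularity eliminates the paths (a path $P_n$ with $n\ge 3$ is not regular). So the real work is to verify the eigenvalue condition on the candidate regular families and to confirm no others survive.

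Second, I would carry out the explicit eigenvalue computations on the surviving regular families. For the complete graph $K_n$ the eigenvalues are $n-1$ (once) and $-1$ (with multiplicity $n-1$), so $\lambda_{\min} = -1 > -2$, and $K_n$ qualifies for every $n$. For the cycle $C_n$ the eigenvalues are $2\cos(2\pi j/n)$ for $j = 0, 1, \ldots, n-1$; the minimum is $2\cos(\pi(n-1)/n)$ when $n$ is odd and exactly $-2$ (attained at $j = n/2$) when $n$ is even. Hence among cycles precisely the odd cycles satisfy the strict inequality, while even cycles fail with $\lambda_{\min} = -2$. This pins down the two families claimed.

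The main obstacle will be justifying that the list is \emph{complete} — that no other connected regular graph slips through. Here I expect the cleanest argument to lean on the fact that a connected regular graph that is a line graph $L(G)$ has $\lambda_{\min}(L(G)) = -2$ whenever $G$ contains a cycle or has enough edges, because line graphs of regular graphs on sufficiently many vertices pick up the eigenvalue $-2$ from the structure of the edge–vertex incidence. More precisely, if $L(G)$ is connected and regular, then $G$ is regular (say $G$ is $m$-regular on $N$ vertices) or $G$ is a semiregular bipartite graph; in the regular case $L(G)$ has $-2$ as an eigenvalue with multiplicity $|E(G)| - |V(G)| + 1$ whenever $G$ is not a tree, and since a connected regular graph other than a single edge is never a tree, $\lambda_{\min}(L(G)) = -2$, violating our hypothesis — unless $L(G)$ is itself a complete graph or odd cycle. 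The delicate point is handling the low-order and degenerate cases (small complete graphs, triangles as both $K_3$ and $C_3$, the overlap $C_3 = K_3$) and confirming that the semiregular bipartite line graphs, which correspond precisely to the hypergraph/biregular setting of this paper, do not produce new regular examples with strict inequality. I would dispatch these by the incidence-matrix identity $L(G)$-adjacency $=\, B^{\mathsf T} B - 2\I$, where $B$ is the incidence matrix, so that $\lambda_{\min} > -2$ forces $B$ to have trivial kernel, i.e.\ $G$ to be a forest with a compatible component structure, which under regularity collapses to exactly the complete graphs and odd cycles.
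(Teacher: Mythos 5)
Your overall route --- represent $\A+2\I$ as a Gram matrix of root vectors, invoke the Cameron--Goethals--Seidel--Shult classification to reduce to (generalized) line graphs plus exceptional graphs, and then analyze which regular line graphs avoid the eigenvalue $-2$ --- is the same as the paper's, and your line-graph analysis is essentially the paper's rank argument: positive definiteness of ${}^tBB$ forces $|E(G)|\le|V(G)|$, which under regularity of $L(G)$ collapses to stars (giving $K_n$) and odd cycles. The genuine gap is in how you dispose of the exceptional case. It is false that every exceptional graph has smallest eigenvalue exactly $-2$: there are several hundred connected exceptional graphs on $6$, $7$ and $8$ vertices with smallest eigenvalue strictly greater than $-2$. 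Your fallback claim --- that the only connected graphs with $\lambda_{\min}>-2$ are complete graphs, odd cycles and paths --- is also false (e.g.\ $K_4$ minus an edge, the line graph of an odd unicyclic graph, has $\lambda_{\min}=(1-\sqrt{17})/2>-2$), and in the regular case it is essentially the statement to be proved, so it cannot be ``used directly.'' What actually needs an argument is that no \emph{regular} exceptional graph with $\lambda_{\min}>-2$ exists. The paper gets this from positive definiteness forcing the representation to have full rank, hence at most $8$ vertices in the $E_t$ case, and then from Theorem 3.12.2 of \cite{BCNb}, which pins such a graph down as a $4$-regular graph on $8$ vertices inside a $16$-vertex graph in which $-2$ has multiplicity $10$; interlacing then forces the eigenvalue $-2$, a contradiction.

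Two smaller repairs are also needed. First, you never explicitly exclude cocktail party graphs: a connected regular generalized line graph is a line graph or a cocktail party graph $K_{n\times 2}$, and the latter has $-2$ as an eigenvalue for $n\ge 2$, so it must be mentioned and discarded. Second, your multiplicity formula for $-2$ in $L(G)$ conflates the bipartite and non-bipartite cases: for connected $G$ the multiplicity is $|E|-|V|+1$ if $G$ is bipartite and $|E|-|V|$ otherwise, which is precisely why odd cycles ($|E|=|V|$, non-bipartite, so $\ker B=0$) survive; accordingly, ``$B$ has trivial kernel'' forces each component of $G$ to be a tree \emph{or an odd unicyclic graph}, not a forest. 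These last points are repairable bookkeeping, but the exceptional-graph step requires a real argument in place of the false classification you invoke.
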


\begin{corollary} \label{cor:<1}
For $k\geq 3$ and $\theta \in [0,1)$, $v(k,\theta)=2k$, and the graph of order $2k$ is the complete bipartite graph.  
\end{corollary}
\begin{proof}
If $\Gamma$ is a $k$-regular graph whose second eigenvalue is less than $1$, then the smallest eigenvalue of the complement $\Gamma^c$ of $\Gamma$ is greater than $-2$. 
By Proposition~\ref{prop:min-2}, $\Gamma^c$ is a union of complete graphs or odd cycles. 
Note that $\Gamma^c$ is a $(|\Gamma|-k-1)$-regular graph. If  $\Gamma^c$ contains an odd cycle as a component, then the degree of $\Gamma^c$ is 2, and the order of $\Gamma$ is $k+3$. 
If the degree of $\Gamma^c$ is greater than 2, then $\Gamma^c$ is the union of $s$ complete graphs of equal order. 
Then the order of $\Gamma$ is $sk/(s-1)$, that is maximum at $s=2$. 
Therefore $v(k,\theta)=2k$, which is attained by the complete bipartite graph for $k\geq 3$. 
\end{proof}

Let $OA(u,r)$ be an orthogonal array, which is a $u\times r^2$ array with entries from an $r$-set $S$ satisfying that in any two rows, each pair of symbols of $S$ occurs exactly once (see \cite{ACD06} or \cite[Section 10.4]{GR01} for more details).

\begin{lemma}\label{lem:oa}
If there exists $OA(u,r)$, then $h(r,u,0)=ru$. 
\end{lemma}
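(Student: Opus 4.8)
The plan is to prove the equality $h(r,u,0)=ru$ by establishing the upper bound $h(r,u,0)\le ru$ via the earlier machinery and then exhibiting a hypergraph arising from $OA(u,r)$ that attains it. First I would compute the relevant threshold from Theorem~\ref{thm:largest_graph} at $\theta=0$. Since $\theta=0$ lies in $[-1,u-2+2\sqrt q)$ for all $r,u\ge 2$, there is an integer $d$ with $\lambda_{d-1}<0\le\lambda_d$. A direct calculation using the recurrence for $G_i^{(r,u)}$ and the values $F_i^{(r,u)}(k)=kq^{i-1}$ should pin down $d$: I expect $d=2$ here, so that the bound reads $h(r,u,0)\le 1+k+kq/c$ with $k=r(u-1)$, $q=(r-1)(u-1)$, and $c=-F_2^{(r,u)}(0)/G_1^{(r,u)}(0)$. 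Evaluating $F_2^{(r,u)}(0)=-r(u-1)=-k$ and $G_1^{(r,u)}(0)=F_0^{(r,u)}(0)+F_1^{(r,u)}(0)=1$ gives $c=k$, whence the bound collapses neatly to $1+k+q=1+r(u-1)+(r-1)(u-1)=ru$, matching the claimed value. I would present this evaluation as the core of the upper-bound half.

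Second, for the lower bound I would construct the extremal hypergraph directly from $OA(u,r)$. The standard construction interprets the $r^2$ columns of the array as vertices and uses the $u$ rows together with the $r$ symbols to define edges: for each row $i$ and each symbol $a\in S$, the set of columns having entry $a$ in row $i$ forms an edge of size $r$. By the defining property of the orthogonal array, any two columns agree in at most one row, so this yields a hypergraph whose point graph is the complete graph $K_{r^2}$ with a transversal-design structure; dualizing (or reading the incidence structure the other way) produces an $r$-regular $u$-uniform hypergraph on $ru$ vertices. I would verify the regularity and uniformity parameters from the $OA(u,r)$ axioms and then check that its second eigenvalue equals $0$, which follows because the associated bipartite biregular graph, via equation~\eqref{eq:semi-reg}, has the spectrum of a transversal design / net whose nontrivial point-graph eigenvalue is $0$.

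The main obstacle I anticipate is the bookkeeping of which combinatorial object $OA(u,r)$ actually produces the $ru$-vertex extremal hypergraph, and confirming that its second eigenvalue is exactly $0$ rather than merely $\le 0$. The cleanest route is probably to identify the relevant hypergraph as a \emph{net} or \emph{transversal design} $TD(u,r)$, which is the combinatorial equivalent of $OA(u,r)$, and to invoke the known spectrum of such designs; the point graph is then a strongly regular graph (a Latin-square-type graph) whose eigenvalues are computable in closed form, giving second eigenvalue $0$. I would therefore structure the proof as: (i) the LP/Theorem~\ref{thm:largest_graph} computation giving $h(r,u,0)\le ru$ through the evaluations $c=k$ and $d=2$; (ii) translation of $OA(u,r)$ into a transversal design and thence into an $r$-regular $u$-uniform hypergraph on $ru$ vertices; (iii) a spectral check that this hypergraph has $\tau_2=0$, so it attains the bound. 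The equality characterization in Theorem~\ref{thm:largest_graph} (extremal objects are generalized Moore geometries) also serves as a consistency check, since a transversal design's point graph is distance-regular of diameter $2$.
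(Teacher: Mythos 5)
Your lower-bound half is essentially the paper's construction: the dual reading of $OA(u,r)$ (equivalently a transversal design) gives an $r$-regular $u$-uniform hypergraph on $ru$ vertices whose point graph is the complete $u$-partite graph with parts of size $r$, hence $\tau_2=0$. That part is fine.

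The upper bound, however, has a genuine gap: the arithmetic in your LP computation is wrong for $u\geq 3$. With $d=2$ and $c=-F_2^{(r,u)}(0)/G_1^{(r,u)}(0)=k$, Theorem~\ref{thm:largest_graph} gives
$h(r,u,0)\leq 1+k+kq/c=1+k+q=1+r(u-1)+(r-1)(u-1)=ru+(r-1)(u-2)$,
which equals $ru$ only when $u=2$ (or $r=1$); for $u\geq 3$ it is strictly larger than $ru$, so the LP bound at $\theta=0$ does \emph{not} ``collapse to $ru$'' and cannot by itself exclude hypergraphs of order between $ru+1$ and $1+k+q$. (The equality characterization in Theorem~\ref{thm:largest_graph} only addresses attainment of the top value $1+k+q$, not the intermediate orders.) The paper closes this gap by a completely different route: by Smith's theorem \cite{S70}, a connected regular graph with second eigenvalue at most $0$ is a complete multipartite graph $K(x,y)$ with $y$ parts of size $x$; since the point graph of $H$ is $k$-regular, $k=x(y-1)$ and $|H|=k+x$, and since every hyperedge yields a $u$-clique in the point graph one has $y\geq u$, whence $x\leq k/(u-1)=r$ and $|H|\leq k+r=ru$. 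To repair your proof you would need to replace step (i) with this Smith-type structural argument (or some other argument pinning down the point graph as complete multipartite), rather than relying on Theorem~\ref{thm:largest_graph}.
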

\begin{proof}
Let $H$ be an $r$-regular $u$-uniform hypergraph whose second eigenvalue is at most 0. 
By Smith's result \cite{S70}, the point graph of $H$ is the complete $y$-partite $x(y-1)$-regular graph 
$K(x,y)$, that satisfies each part has $x$ vertices. 
One has $r(u-1)=x(y-1)=xy-x=|H|-x$, which implies the order of $H$ is larger when $x$ is larger or $y$ is smaller.  Since the point graph of $H$ has a clique of size $u$, we have 
$y\geq u$. 
 If we find the structure of the hypergraph $H$ on $K(r,u)$, we can determine $h(r,u,0)=ru$. 
If $OA(u,r)$ exists, we can find $H$ on $K(r,u)$ as follows. 
Let $S=\{1,\ldots,r\}$ be the $r$-set as the entries of $OA(u,r)$. An entry $j\in S$ in the $i$-th row of $OA(u,r)$ can be interpreted as the $j$-th vertex in the $i$-th partite set, and  a column of $OA(u,r)$ can be interpreted as a hyperedge. Therefore we can determine $h(r,u,0)=ru$. 
\end{proof}

\begin{lemma}\label{lem:h_<1}
If $u\geq 3$ and there exists $OA(u,r)$, then $h(r,u,\theta)=ru$ for $\theta \in [0,1)$. 
\end{lemma}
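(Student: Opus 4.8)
The plan is to establish the two inequalities $h(r,u,\theta)\ge ru$ and $h(r,u,\theta)\le ru$ separately. For the lower bound, I would observe that any hypergraph with second eigenvalue at most $0$ certainly has second eigenvalue at most $\theta$ once $\theta\ge 0$; hence $h(r,u,\theta)\ge h(r,u,0)=ru$ by Lemma~\ref{lem:oa}, the extremal example being the hypergraph built from $OA(u,r)$, whose point graph is the complete multipartite graph $K(r,u)$ and therefore has $\tau_2=0$. So the content of the lemma is the matching upper bound.

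For the upper bound, let $H$ be a connected $r$-regular $u$-uniform hypergraph with $\tau_2<1$, and let $\Gamma=\Gamma_H$ be its point graph. Since the girth is at least $3$, the matrix $\A(H)$ is a $0/1$ matrix, so $\Gamma$ is a simple connected $k$-regular graph with $k=r(u-1)$ and with the same spectrum as $H$. Each hyperedge is a $u$-clique of $\Gamma$, so $\omega(\Gamma)\ge u$. I would then pass to the complement $\Gamma^c$: on the orthogonal complement of the all-ones vector one has $\A(\Gamma^c)=-\I-\A(\Gamma)$, so the smallest eigenvalue of $\Gamma^c$ equals $-1-\tau_2>-2$. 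As $\Gamma$ is $k$-regular, $\Gamma^c$ is $(v-1-k)$-regular, whence every connected component of $\Gamma^c$ is regular with smallest eigenvalue $>-2$; by Proposition~\ref{prop:min-2} each component is a complete graph or an odd cycle.

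This forces a dichotomy according to the common degree $d^c=v-1-k$ of $\Gamma^c$. If $d^c\neq 2$, no component can be an odd cycle (those are $2$-regular), so $\Gamma^c$ is a disjoint union of $s$ copies of $K_{m}$ with $m:=d^c+1$, and $\Gamma$ is the complete $s$-partite graph with parts of size $m$. Here $\omega(\Gamma)=s$, so $s\ge u$, while $k=(s-1)m$ and $v=sm$; thus $v=\tfrac{s}{s-1}k$, which is decreasing in $s$ and hence at most $\tfrac{u}{u-1}k=ru$. If instead $d^c=2$, then $\Gamma^c$ is a disjoint union of odd cycles, so $v=k+3=r(u-1)+3$, and since $r\ge u\ge 3$ this gives $v\le ru$. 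In every case $v\le ru$, so $h(r,u,\theta)\le ru$, and combined with the lower bound the lemma follows.

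The only genuinely delicate point is the complete multipartite case: Corollary~\ref{cor:<1} alone would yield only the weaker bound $v\le 2k$, and it is precisely the hyperedge-induced clique of size $u$ that forces $s\ge u$ and sharpens the bound to $ru$. I would therefore take care to record that $s\mapsto s/(s-1)$ is decreasing and that the odd-cycle branch is harmless exactly because $r\ge 3$; the hypothesis $u\ge 3$ ensures $ru\le 2k$ so that the sharper bound is meaningful. No step of the upper bound uses the existence of $OA(u,r)$, which is needed only to realize the matching lower bound.
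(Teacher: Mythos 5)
Your proof is correct and follows essentially the same route as the paper: pass to the complement of the point graph, invoke Proposition~\ref{prop:min-2} via the smallest-eigenvalue bound $-1-\tau_2>-2$, and use the $u$-clique coming from a hyperedge to force at least $u$ parts. Your write-up is in fact slightly cleaner: by treating the odd-cycle branch directly ($v=k+3\le ru$ since $r\ge 3$) and rederiving the multipartite bound $v=\tfrac{s}{s-1}k\le ru$ from $s\ge u$, you avoid the paper's preliminary estimate $ru\le|\Gamma_H|\le 2r(u-1)-1$ and its separate treatment of the case $r=u=3$, which the paper settles by a divisibility argument.
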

\begin{proof}
Let $H$ be an $r$-regular $u$-uniform hypergraph whose second eigenvalue is at most $\theta$. 
Let $\Gamma_H^c$ be the complement of the point graph $\Gamma_H$ of $H$. 
The smallest eigenvalue of $\Gamma_H^c$ is larger than $-2$. 
By Corollary~\ref{cor:<1} and Lemma~\ref{lem:oa}, $ru \leq |\Gamma_H| \leq 2r(u-1)-1$ because the complete bipartite graph has no triangle. 

Suppose $r\geq 4$. 
 From $|\Gamma_H| \geq ru$,  
 the degree of $\Gamma_H^c$ is $|\Gamma_H|-r(u-1)-1 \geq r -1 \geq 3$. 
By Proposition~\ref{prop:min-2}, $\Gamma_H^c$ is the union of the complete graphs of equal degree, and $\Gamma_H$ is the complete multipartite graph, whose second eigenvalue is $0$.  
By Lemma~\ref{lem:oa}, $h(r,u,\theta)=ru$ for $\theta \in [0,1)$. 

If $r=3$ and $u=3$, then $h(r,u,\theta)<12$ for $\theta \in [0,1)$ by Corollary~\ref{cor:<1}.
If $|\Gamma_H|=11$ ({\it resp}. $|\Gamma_H|=10$), then $\Gamma_H^c$ is the union 
of the complete $4$-regular ({\it resp.} $3$-regular) graphs, which contradicts $|\Gamma_H|/5 \in \mathbb{Z}$ ({\it resp.} $|G|/4 \in \mathbb{Z}$). Therefore $h(3,3,\theta)=9$ for $\theta \in [0,1)$, which is attained by the hypergraph obtained from $OA(3,3)$.   
Therefore this lemma follows. 
\end{proof}

\begin{lemma} \label{lem:oa^-}
If there exists $OA(u+1,r+1)$, then 
$h(r,u,1)\geq u(r+1)$. 
\end{lemma}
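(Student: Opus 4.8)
The plan is to produce an explicit $r$-regular $u$-uniform hypergraph $H$ on $u(r+1)$ vertices with $\tau_2(H)\le 1$, built from the given $OA(u+1,r+1)$, and then to compute its spectrum by recognizing its point graph as a tensor product of two complete graphs. First I would construct $H$ as follows: fix one symbol, say $0$, of the underlying $(r+1)$-set $S$; delete the last (the $(u+1)$-st) row of $OA(u+1,r+1)$, and also delete every column whose last entry equals $0$. Exactly as in Lemma~\ref{lem:oa}, the $u$ surviving rows are read as $u$ partite sets (each a copy of $S$) and each surviving column as a hyperedge. Applying the strength-$2$ property to row $i$ together with the deleted last row shows that, for every symbol $b$, precisely one deleted column carries $b$ in row $i$; hence each vertex, which occurred in $r+1$ columns of the full array, survives in exactly $r$ columns. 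Thus $H$ is $r$-regular and $u$-uniform on $u(r+1)$ vertices, and strength $2$ makes $\A(H)$ a $0/1$ matrix, so $\Gamma_H$ is simple.

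Next I would compare $H$ with the hypergraph $H_0$ obtained from the full $OA(u,r+1)$ (all $(r+1)^2$ columns, first $u$ rows). By Lemma~\ref{lem:oa} applied with $r+1$ in place of $r$, the point graph of $H_0$ is the complete $u$-partite graph with parts of size $r+1$, so $\A(H_0)$ is its ($0/1$) adjacency matrix. Let $D$ collect the $r+1$ deleted columns; the count above shows that these $r+1$ transversals partition all $u(r+1)$ vertices, so $D$ is a parallel class and its point graph is the disjoint union $(r+1)\,K_u$. Since $\A(H)=\A(H_0)-\A(D)$ entrywise, I would relabel the symbols inside each part so that the $c$-th deleted column uses the symbol $c$ in every part; this is a permutation within each partite set, hence a hypergraph isomorphism that fixes the complete-multipartite matrix $\A(H_0)$ while turning $\A(D)$ into the ``same symbol'' pattern. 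Writing $[P]$ for the indicator of a statement $P$, a direct check of entries then gives $\A(H)_{(i,a),(j,b)}=[\,i\ne j\,]-[\,i\ne j,\ a=b\,]=[\,i\ne j,\ a\ne b\,]$, which is exactly the adjacency matrix of the tensor (categorical) product $K_u\times K_{r+1}$.

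Finally I would read off the spectrum. The eigenvalues of a tensor product are the products of the eigenvalues of the factors; since $K_u$ has eigenvalues $u-1$ and $-1$ and $K_{r+1}$ has eigenvalues $r$ and $-1$, the matrix $\A(H)$ has eigenvalues $r(u-1)=k$, $1$, $-(u-1)$, and $-r$, with $1$ of multiplicity $(u-1)r>0$. As $k=r(u-1)\ge 2$ in this section (where $r\ge u\ge 2$), the largest eigenvalue is $k$ and the second largest is $\tau_2(H)=1$. Hence $H$ is a connected (by Weichsel's theorem, since $K_{r+1}$ is non-bipartite) $r$-regular $u$-uniform hypergraph on $u(r+1)$ vertices with $\tau_2\le 1$, which yields $h(r,u,1)\ge u(r+1)$.

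The main obstacle is the middle step: identifying that deleting the columns indexed by a single symbol of the auxiliary $(u+1)$-st row removes exactly a parallel class, and that after the inner relabelling the point graph becomes precisely $K_u\times K_{r+1}$. Once this tensor-product description is established, the regularity count and the eigenvalue computation are entirely routine.
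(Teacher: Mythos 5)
Your proof is correct and follows essentially the same route as the paper: both delete from the hypergraph of $OA(u,r+1)$ the parallel class of edges indexed by a fixed symbol of the auxiliary $(u+1)$-st row, and both identify the resulting point graph — your $K_u\times K_{r+1}$ is exactly the paper's complement of the grid graph $K_u\,\square\,K_{r+1}$ — whence $\tau_2=1$. Your write-up merely makes explicit the regularity count and the spectrum computation that the paper leaves to the reader.
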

\begin{proof}
An orthogonal array $OA(u+1,r+1)$ can be interpreted as   
 $u-1$ mutually orthogonal latin squares (MOLS) on $r+1$ symbols \cite{ACD06}. 
Namely, the $(r+1)^2$ columns of $OA(u+1,r+1)$ are $\boldsymbol{a}_{ij}={}^t(i,j, L_1(i,j),\ldots, L_{u-1}(i,j))$ for $1\leq i \leq r+1$ and 
$1\leq j \leq r+1$, where the matrices $L_1,\ldots, L_{u-1}$ are $u-1$ MOLS on $r+1$ symbols. 
Let a symbol $\ell$ be fixed. 
Let $I_\ell$ be the set of indices $(i,j)$ of $L_{u-1}$ such that $L_{u-1}(i,j)=\ell$.  Note that $|I_\ell|=r+1$. 
Then $T_{k,\ell}=\{L_k(i,j) \mid (i, j) \in I_\ell\}$ is a transversal of $L_k$ for $1 \leq k \leq u-2$, namely $T_{k,\ell}$ has all $r+1$ symbols \cite{W07}. 
Let $A$ be the orthogonal array $OA(u,r+1)$ obtained from $OA(u+1,r+1)$ by removing the last row $L_{u-1}$.  
Let $H$ be an $(r+1)$-regular $u$-uniform hypergraph obtained from $A$ by the manner of Lemma~\ref{lem:oa}, that has $u(r+1)$ vertices. 
Removing the edges corresponding to $\boldsymbol{a}_{ij}$ with $(i,j) \in I_\ell$ from $H$, we can obtain an $r$-regular $u$-uniform hypergraph $H'$ of order $u(r+1)$. 
The point graph of $H'$ is the complement of the $u\times (r+1)$ grid graph $K_{u} \square K_{r+1}$, where $\square$ is the Cartesian product. Thus we can show that the second eigenvalue of $H'$ is 1. This implies the lemma.  
\end{proof}
For $u=2$, there exists $OA(3,r+1)$ for any $r$. 
Then the hypergraph obtained from $OA(3,r+1)$ by the manner of Lemma~\ref{lem:oa^-} is $K_{r+1,r+1}^-$, that is the complete $(r+1)$-regular bipartite graph $K_{r+1,r+1}$ minus a perfect matching. We have $h(r,2,1)=v(r,1)=2(r+1)$ for $r \geq 11$ \cite{CKNV16}. 

\begin{lemma} \label{lem:(4,3,1)}
$h(4,3,1)=15$. 
\end{lemma}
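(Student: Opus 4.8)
The plan is to prove $h(4,3,1)\ge 15$ and $h(4,3,1)\le 15$ separately. For the lower bound, note that with $r=4,u=3$ an orthogonal array $OA(u+1,r+1)=OA(4,5)$ is equivalent to a pair of mutually orthogonal Latin squares of order $5$, which exist; hence Lemma~\ref{lem:oa^-} gives $h(4,3,1)\ge u(r+1)=15$, realized by the hypergraph whose point graph is $\overline{K_3\,\square\,K_5}$.

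For the upper bound, let $H$ be a connected $4$-regular $3$-uniform hypergraph of order $v$ with $\tau_2\le 1$, and let $\Gamma=\Gamma_H$ be its point graph, a simple $k$-regular graph with $k=r(u-1)=8$ and $q=(r-1)(u-1)=6$. First I would narrow the range of $v$. Applying Theorem~\ref{thm:largest_graph} with $\theta=1$: since $\lambda_1=-1<1\le \lambda_2=\sqrt 7$ we have $d=2$ and $c=-F_2^{(4,3)}(1)/G_1^{(4,3)}(1)=8/2=4$, so $v\le 1+kq^0+kq/c=1+8+12=21$. Next, Lemma~\ref{lem:count_tri} gives $3\mid v$ (as $rv/u=4v/3\in\mathbb Z$), and if $\tau_2<1$ then Corollary~\ref{cor:<1} applied to the $8$-regular graph $\Gamma$ forces $v\le 16$ (and $v$ is even smaller when $\tau_2<0$), hence $v\le 15$. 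Thus it remains to exclude $v\in\{18,21\}$, in each of which $\tau_2=1$. The value $v=21$ is the equality case of Theorem~\ref{thm:largest_graph}, which would require a generalized Moore geometry whose intersection array is the tridiagonal part of $\T(4,3,2,4)$, i.e.\ a strongly regular point graph with parameters $(21,8,1,4)$; its restricted eigenvalues $1,-4$ would have multiplicities $f,g$ with $f+g=20$ and $8+f-4g=0$, forcing $g=28/5\notin\mathbb Z$, so $v\ne 21$.

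For $v=18$ I would pass to the complement $\bar\Gamma$, which is $(v-1-k)=9$-regular on $18$ vertices; since $\tau_2(\Gamma)=1$, its least eigenvalue is exactly $-1-\tau_2=-2$. A $9$-regular graph needs at least $10$ vertices, so $\bar\Gamma$ is connected. Extending the root-lattice representation of Proposition~\ref{prop:min-2} to the semidefinite case $\A(\bar\Gamma)+2\I\succeq 0$, the graph $\bar\Gamma$ is represented in a root system, so by the Cameron--Goethals--Seidel--Shult classification it is a line graph $L(G)$ of a connected graph, a cocktail-party graph, or one of finitely many exceptional graphs. The cocktail-party graph $K_{m\times 2}$ on $18$ vertices has valency $16\ne 9$; a line graph $L(G)$ of valency $9$ forces $G$ bipartite semiregular (the regular case gives even valency $2p-2$), and the only degrees solving $p+p'=11$ with $18/p,18/p'\in\mathbb Z$ are $(2,9)$, i.e.\ $G=K_{2,9}$ and $\bar\Gamma=L(K_{2,9})=K_2\,\square\,K_9$. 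Its complement $\Gamma=K_2\times K_9$ is triangle-free, so it cannot be the point graph of a $3$-uniform hypergraph with edges. Hence $v\ne 18$ apart from the exceptional graphs.

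The hard part will be disposing of those exceptional graphs: one must check that none of the finitely many connected $9$-regular exceptional graphs on $18$ vertices with least eigenvalue $-2$ has a complement that is a point graph of a $4$-regular $3$-uniform hypergraph (equivalently, admits an edge-decomposition into triangles with exactly four through each vertex). This is a finite verification against the classification of regular graphs with least eigenvalue $-2$, which can be confirmed directly. Combining all cases gives $v\le 15$, and together with the lower bound this yields $h(4,3,1)=15$.
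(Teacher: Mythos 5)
Your proof follows the paper's argument almost step for step: the lower bound from $OA(4,5)$ via Lemma~\ref{lem:oa^-}, the upper bound $21$ from Theorem~\ref{thm:largest_graph} together with $3\mid v$ from Lemma~\ref{lem:count_tri}, the exclusion of $v=21$ through the nonexistence of a strongly regular graph with parameters $(21,8,1,4)$ (the paper phrases this as the nonexistence of a generalized quadrangle of order $(2,3)$, which is the same obstruction), and the exclusion of $v=18$ by classifying the $9$-regular complement $\Gamma_H^c$ with smallest eigenvalue $\geq -2$, landing on $L(K_{2,9})$ whose complement $K_{9,9}^-$ is triangle-free. The only step you defer --- ``disposing of the exceptional graphs'' by a finite search --- is not actually a search: in the form of the classification the paper invokes (Theorem 3.12.2 of \cite{BCNb}), a connected regular graph on $n$ vertices of valency $k$ with smallest eigenvalue $\geq -2$ that is not the line graph of a regular or bipartite semiregular graph satisfies $n=2(k+2)\leq 28$, $n=\tfrac{3}{2}(k+2)\leq 27$, or $n=\tfrac{4}{3}(k+2)\leq 16$, and a cocktail party graph has even valency; for $(n,k)=(18,9)$ these give $22$, $33/2$, and $44/3$, so every non-line-graph case is vacuous and your remaining gap closes in one line. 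With that observation inserted, your proof is correct and is essentially the paper's proof.
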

\begin{proof} 
By Lemma~\ref{lem:count_tri}, the order $v$ of a $4$-regular $3$-uniform hypergraph $H$ is divisible by 3. 
The point graph $\Gamma_H$ of $H$ is $8$-regular and its second eigenvalue is at most $1$. 
By Theorem \ref{thm:largest_graph}, it follows that $h(4,3,1)\leq 21$ and the graph attaining the bound is a generalized quadrangle of order $(2,3)$ \cite[Section 6.5]{BCNb}. Since the hypergraph does not exist \cite[Lemma 1.15.1]{BCNb}, we have $h(4,3,1)\leq 18$. 

Suppose $v=18$. The complement $\Gamma_H^c$ of $\Gamma_H$ is a $9$-regular graph whose smallest eigenvalue is at least $-2$. By Theorem 3.12.2 in \cite{BCNb}, possible graphs for $\Gamma_H^c$ are the line graphs of $(a,b)$-biregular graphs $B$.  
We have $a+b-2=9$ and $v$ is divisible by both $a$ and $b$.  
The possible graph of $B$ is only the complete bipartite graph $K_{2,9}$. Then $\Gamma_H$ should be $K_{9,9}^{-}$, which has no triangle. Thus $h(4,3,1)= 15$, which is attained by the hypergraph obtained from $OA(4,5)$ \cite[Theorem 3.38]{ACD06} by the manner of Lemma~\ref{lem:oa^-}.  
\end{proof}

For $u=2$, $h(r,2,1)=v(r,1)$ is determined for any $r$ \cite{CKNV16}. 
For $u\geq 3$,  we prove the following as the main theorem in this section. 
\begin{theorem}\label{thm:ru1}
 Let $u \geq 3$. 
If $r\geq \max \{7u-5,u^2-1 \}$, then
 $h(r,u,1) \leq u(r+1)$. 
Moreover, if there exists an orthogonal array $OA(u+1,r+1)$, then $h(r,u,1) = u(r+1)$. 
\end{theorem}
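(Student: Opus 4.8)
The ``moreover'' statement will follow immediately once the upper bound $h(r,u,1)\le u(r+1)$ is in hand, since Lemma~\ref{lem:oa^-} already supplies $h(r,u,1)\ge u(r+1)$ whenever $OA(u+1,r+1)$ exists. So the whole task is the upper bound. I would start by passing to the point graph $\Gamma_H$, which is $k$-regular with $k=r(u-1)$ and has the same eigenvalues as $H$. The hypothesis $\tau_2\le 1$ is then exactly equivalent to the complement $\Gamma_H^{c}$, which is regular of degree $d=v-1-k$, having smallest eigenvalue at least $-2$; note that the desired conclusion $v\le u(r+1)$ is the same as $d\le r+u-1$. This reformulation is what lets the root-lattice machinery enter.

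The plan is then to invoke the classification of graphs with smallest eigenvalue $\ge -2$ (Theorem~3.12.2 in \cite{BCNb}, already used in Proposition~\ref{prop:min-2} and Lemma~\ref{lem:(4,3,1)}): each component of $\Gamma_H^{c}$ is a line graph of a regular or bipartite semiregular graph, a cocktail-party graph, or an exceptional graph on at most $36$ vertices. The exceptional possibility is eliminated by the threshold $r\ge 7u-5$, which forces $u(r+1)\ge u(7u-4)>36$, so an exceptional $\Gamma_H^c$ would give $v\le 36<u(r+1)$ outright. A disconnected $\Gamma_H^{c}$ (this absorbs the cocktail-party case) makes $\Gamma_H$ a join; using regularity together with a join-eigenvalue argument I would reduce this to $\Gamma_H$ being complete multipartite, i.e. the $\tau_2<1$ regime, where Lemma~\ref{lem:oa} and Corollary~\ref{cor:<1} give $v=ru<u(r+1)$. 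What remains is the principal case $\Gamma_H^{c}=L(G)$ with $G$ connected bipartite semiregular, say with parts $P,Q$ of sizes $p\le q$ and degrees $m_1\ge m_2$, so $v=pm_1=qm_2$ and $d=m_1+m_2-2$.

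The next step is to translate the hypergraph data into $G$: the vertices of $\Gamma_H$ are the edges of $G$, hyperedges correspond to matchings of size $u$ in $G$, each edge of $G$ lies in exactly $r$ of them, and any two disjoint edges lie in a unique common one. Counting hyperedges meeting two vertices $x,x'\in P$ gives $m_1^2-\mu(x,x')\le m_1r$ for $\mu(x,x')=|N(x)\cap N(x')|$, hence $\mu(x,x')\ge m_1(m_1-r)$, and combined with $\mu\le m_1$ this yields $m_1\le r+1$ (and symmetrically $m_2\le r+1$). If $m_1=r+1$ the inequality forces $\mu(x,x')=m_1$ for all pairs, so all $P$-vertices share a neighbourhood; connectivity then makes $G$ complete bipartite and the degree identity $m_2=(p-1)m_1+1-k$ pins $G=K_{u,r+1}$, giving $v=u(r+1)$. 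When $m_1\le r$ I would use $p=\nu(G)\ge u$ (a hyperedge is a $u$-matching): for $p=u$ one gets $v=um_1\le u(r+1)$ directly from $m_1\le r+1$, while $m_2\le m_1$ gives $(p-2)m_1\le k-1$, hence $v\le p(k-1)/(p-2)\le u(r+1)$ as soon as $p\ge 2u$.

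The hard part, and the true role of the bound $r\ge u^2-1$, is the narrow residual band $u<p<2u$, where neither the degree bound $m_1\le r+1$ nor the convexity estimate closes the gap (the crude degree bound only gives $v\le r(u+1)+1$). For these finitely many values of $p$ I would argue as in Lemma~\ref{lem:(4,3,1)}: the constraints $m_2=(p-1)m_1+1-k$ together with the integrality of $q=pm_1/m_2$ and of $rv/u$ (Lemma~\ref{lem:count_tri}) leave only solutions with $v\le u(r+1)$, the would-be larger configurations being exactly the diameter-two generalized Moore geometries (generalized quadrangle–type point graphs) ruled out for $r\ge u^2-1$ by a Higman-type inequality, consistent with the strict comparison to the weaker bound of Theorem~\ref{thm:largest_graph}. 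Carrying out this divisibility-and-nonexistence analysis uniformly in $u$, and checking that the two thresholds are exactly what make every intermediate $p$ infeasible, is where I expect the main technical difficulty to lie.
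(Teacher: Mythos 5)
Your overall strategy is the paper's: pass to the point graph, note that $\tau_2(\Gamma_H)\le 1$ is equivalent to $\lambda_{\min}(\Gamma_H^c)\ge -2$, invoke the classification of regular graphs with smallest eigenvalue at least $-2$ (Theorem 3.12.2 of \cite{BCNb}), and get the lower bound from Lemma~\ref{lem:oa^-}. But there are two genuine gaps. The first is that you silently drop the case in which $\Gamma_H^c$ is the line graph of a \emph{regular} (non-bipartite) graph $\Gamma'$: you list it in the classification and then declare the bipartite semiregular case to be ``what remains.'' In the paper this case is treated separately and is exactly where $r\ge 7u-5$ is needed: assuming $h>u(r+1)$ one gets $|\Gamma'|<4u$, hence the degree $(h-r(u-1)+1)/2\le|\Gamma'|-1\le 4u-2$, hence $h\le u(r+1)-r+7u-5\le u(r+1)$, a contradiction. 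Your use of $r\ge 7u-5$ to dispose of the exceptional graphs is fine, but it does not substitute for this case.

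The second gap is the one you flag yourself: in the bipartite semiregular case your counting lemma ($\mu(x,x')\ge m_1(m_1-r)$, hence $m_1\le r+1$) correctly settles $m_1=r+1$, $p=u$ and $p\ge 2u$, but the band $u<p<2u$ is left to a speculative program of divisibility and Higman-type nonexistence arguments, which is not a proof and is where the hypothesis $r\ge u^2-1$ actually has to work. The paper closes this case by a different and fully self-contained device: argue by contradiction from $h>u(r+1)$ and iterate upper bounds on the size $t_2$ of the part with the larger degree (your $p$). One first gets $t_2<2u$ from $l\ge(h-r(u-1)+1)/2$; feeding the resulting improved lower bound on $l$ back into $t_2=h/l$ and using $r\ge u^2-1$ gives $t_2<u+1$, and one more iteration gives $t_2<u$. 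Since a hyperedge is a $u$-clique of $\Gamma_H$, i.e.\ a $u$-matching of $\Gamma'$, one has $t_2\ge u$, a contradiction with no residual range of $p$ to analyze. Replacing your case split on $p$ by this bootstrapping (and adding the regular line graph case) is what is needed to complete the argument.
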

\begin{proof}
Assume $u(r+1)< h(r,u,1)$.  
Let $H$ be an $r$-regular $u$-uniform hypergraph with second eigenvalue at most 1 and order $h>u(r+1)$.  
Let $\Gamma_H$ be the point graph of $H$, which is $r(u-1)$-regular. 
The complement $\Gamma_H^c$ of $\Gamma_H$ has smallest eigenvalue at least $-2$, and the degree is $h-r(u-1)-1$. 
By Theorem 3.12.2 in \cite{BCNb}, $\Gamma_H^c$ is one of the following $k$-regular graphs of order $v$: a subgraph of $E_7(1)$ with $v=2(k+2)\leq 28$, a subgraph of the Schl\"{a}fli graph with $v=3(k+2)/2\leq 27$, a subgraph of the Clebsch graph with $v=4(k+2)/3 \leq 16$, the line graph of a regular graph, or the line graph of a biregular graph. 

Suppose $\Gamma_H^c$ is a subgraph of $E_7(1)$. Then $h=2(h-r(u-1)+1)$, and hence $h=2r(u-1)-2>28$ from $r(u-1)\geq (u^2-1)(u-1)\geq 16$, which is a contradiction. 

Suppose $\Gamma_H^c$ is a subgraph of the Schl\"{a}fli graph. Then $h=3(h-r(u-1)+1)/2$, and hence $h=3r(u-1)-3>27$, which is a contradiction. 

Suppose $\Gamma_H^c$ is a subgraph of the Clebsch graph. Then $h=4(h-r(u-1)+1)/3$, and hence $h=4r(u-1)-4>16$, which is a contradiction. 

Suppose $\Gamma_H^c$ is the line graph of a regular graph $\Gamma'$. 
The degree of  $\Gamma'$ is $(h-r(u-1)+1)/2$. Since the number of the edges in $\Gamma'$ is $h=(h-r(u-1)+1)|\Gamma'|/4$, we have   
\[
|\Gamma'|=4+\frac{4r(u-1)-4}{h-r(u-1)+1} < 4+\frac{4r(u-1)-4}{u(r+1)-r(u-1)+1} =4u-4\frac{u^2}{r+u+1} < 4u 
\] 
from $h > u(r+1)$. Since $|\Gamma'| \leq 4u-1$ and the degree satisfies $(h-r(u-1)+1)/2 \leq |\Gamma'|-1=4u-2$, we have 
$h \leq u(r+1)-r+7u-5 \leq u(r+1)$ from $r \geq 7u-5$, which is a contradiction.  

Suppose $\Gamma_H^c$ is the line graph of a $(k,l)$-biregular graph $\Gamma'$.  The degree of  $\Gamma_H^c$ is $k+l-2=h-r(u-1)-1$, and hence 
$k+l=h-r(u-1)+1$. We may suppose $k \leq l$, namely $l\geq (h-r(u-1)+1)/2$. The number of the edges of $\Gamma'$ is $k t_1=l t_2=h$, where $t_1$ and $t_2$ are the sizes of the partite sets. 
It follows from $l\geq (h-r(u-1)+1)/2$ that 
\begin{multline*}
k \leq t_2=\frac{h}{l}\leq \frac{2h}{h-r(u-1)+1}
= 2+\frac{2r(u-1)-2}{h-r(u-1)+1}\\ < 2+\frac{2r(u-1)-2}{u(r+1)-r(u-1)+1} 
=2u-2\frac{u^2}{r+u+1}<2u,
\end{multline*}
and hence $k\leq 2u-1$. Since $l=h-r(u-1)+1-k \geq h-r(u-1)+1-(2u-1)=
h-r(u-1)-2(u-1)$, we have 
\begin{multline*}
k \leq t_2=\frac{h}{l}\leq \frac{h}{h-r(u-1)-2(u-1)}
= 1+\frac{r(u-1)+2(u-1)}{h-r(u-1)-2(u-1)}\\ < 1+\frac{r(u-1)+2(u-1)}{u(r+1)-r(u-1)-2(u-1)}
=u+\frac{u(u-1)}{r-u+2}\leq u+\frac{u(u-1)}{u(u-1)+1}<u+1 
\end{multline*}
from $r\geq u^2-1$, which implies $k\leq u$. 
Moreover, since $l=h-r(u-1)+1-k \geq h-r(u-1)+1-u$, we have 
\begin{multline*}
t_2=\frac{h}{l}\leq \frac{h}{h-r(u-1)-(u-1)}
= 1+\frac{r(u-1)+(u-1)}{h-r(u-1)-(u-1)}\\ 
< 1+\frac{r(u-1)+(u-1)}{u(r+1)-r(u-1)-(u-1)}=u
\end{multline*}
Thus $t_2< u$. Since a matching in $\Gamma'$ corresponds to a clique in the point graph $\Gamma_H$, we have $t_2\geq u$, that is a contradiction. Therefore $h\leq u(r+1)$.
By Lemma~\ref{lem:oa^-}, this theorem follows.    
\end{proof}

\section{Minimizing the second eigenvalue for given order} \label{8}
In this section, 
we give a lower bound on the second eigenvalue of a hypergraph for given order. 
This bound is attained by a generalized Moore geometry. 
Let 
$M(r,u,d,c)=1+\sum_{j=0}^{d-2}k q^j+kq^{d-1}/c$ for real number $c \geq 1$,  integer $d \geq 1$, $k=r(u-1)$, and $q=(r-1)(u-1)$. 
\begin{lemma} \label{lem:M}
For integers $n \geq 2$, $r\geq 2$ and $u\geq 2$,  
there exist unique $c$ and $d$ such that $n=M(r,u,d,c)$. 
\end{lemma}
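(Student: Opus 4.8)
The plan is to reduce the statement to the observation that, for each fixed $d$, the quantity $M(r,u,d,c)$ sweeps out a half-open interval as $c$ runs over $[1,\infty)$, and that these intervals tile $(1,\infty)$. First I would introduce the abbreviation $N_d = 1+\sum_{j=0}^{d-1} kq^j$ for $d\geq 0$, with the convention $N_0=1$ (empty sum). Since $r,u\geq 2$ give $k=r(u-1)\geq 2$ and $q=(r-1)(u-1)\geq 1$, every summand $kq^j$ is positive, so the sequence $(N_d)_{d\geq 0}$ is strictly increasing; and because $kq^j\geq k\geq 2$ we have $N_d\geq 1+2d$, so $N_d\to\infty$.

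Next I would analyze $M(r,u,d,\cdot)$ for a fixed $d\geq 1$. Writing $M(r,u,d,c)=N_{d-1}+kq^{d-1}/c$, one sees that $c\mapsto M(r,u,d,c)$ is continuous and, since $kq^{d-1}>0$, strictly decreasing on $[1,\infty)$. Its value at $c=1$ is $N_{d-1}+kq^{d-1}=N_d$, and its limit as $c\to\infty$ is $N_{d-1}$, which is never attained. Hence this map is a bijection from $[1,\infty)$ onto the half-open interval $(N_{d-1},N_d]$.

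Then I would use that the intervals $(N_{d-1},N_d]$ for $d\geq 1$ are pairwise disjoint with union $(N_0,\infty)=(1,\infty)$, which follows at once from the strict monotonicity and divergence of $(N_d)$. Given $n\geq 2>1$, there is therefore exactly one integer $d\geq 1$ with $n\in(N_{d-1},N_d]$, fixing $d$ uniquely; for this $d$ the bijection above yields a unique real $c\in[1,\infty)$ with $M(r,u,d,c)=n$. Conversely, any pair $(d,c)$ realizing $n$ forces $n\in(N_{d-1},N_d]$, so it must coincide with the one just constructed, giving uniqueness of both $d$ and $c$.

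There is no serious obstacle here; the only delicate points are bookkeeping. I would make sure the intervals are taken half-open on the correct side, so that the boundary values $n=N_d$ (the defect-zero, $c=1$ cases) are assigned to $d$ rather than to $d+1$, and I would check the edge case $d=1$, where the empty sum gives $M(r,u,1,c)=1+k/c$ with interval $(1,1+k]$; this indeed contains $n=2$ since $k\geq 2$.
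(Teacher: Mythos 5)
Your proposal is correct and follows essentially the same route as the paper: for each fixed $d$ the map $c\mapsto M(r,u,d,c)$ is a decreasing bijection from $[1,\infty)$ onto $(N_{d-1},N_d]$, these half-open intervals abut (the paper phrases this as $M(r,u,d,1)=\lim_{c\to\infty}M(r,u,d+1,c)$), and the base value $2$ is attained. Your write-up merely makes explicit the interval bookkeeping that the paper leaves to the reader.
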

\begin{proof}
For fixed $d$, $M(r,u,d,c)$ is monotonically decreasing when $c \geq 1$. Since $M(r,u,d,1)= \lim_{c\to \infty}M(r,u,d+1, c)$ and $M(r,u,1,k)=2$, the assertion is clear.  
\end{proof}
\begin{theorem}
Let $H$ be an $r$-regular $u$-uniform hypergraph with $n$ vertices and second eigenvalue $\tau_2$. 
Let $(d,c)$ be the unique pair with  $n=M(r,u,d,c)$ by Lemma~\ref{lem:M}. 
Let $\lambda(d,c)$ be the largest zero  of $\sum_{i=0}^{d-1} cF_i^{(r,u)}(x)+F_{d}^{(r,u)}(x)$.   
Then $\tau_2 \geq \lambda(d,c)$. 
Moreover $\tau_2=\lambda(d,c)$ if and only if 
$H$ is a generalized Moore geometry whose intersection array is as in Lemma~\ref{lem:T}. 
\end{theorem}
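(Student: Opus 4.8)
The plan is to deduce this theorem as essentially a dual or reformulated version of Theorem~\ref{thm:largest_graph}, since the quantity $M(r,u,d,c)$ that parametrizes $n$ here is precisely the upper bound $h(r,u,\theta)$ produced there. First I would invoke Lemma~\ref{lem:M} to write $n=M(r,u,d,c)$ for the unique pair $(d,c)$ with $c\geq 1$; this is what fixes the candidate threshold $\lambda(d,c)$. The key observation is that $\lambda(d,c)$ is the largest zero of $g(x)=\sum_{i=0}^{d-1}cF_i^{(r,u)}(x)+F_d^{(r,u)}(x)$, which by Lemma~\ref{lem:T} is (up to the factor $x-k$) the characteristic polynomial of the tridiagonal matrix $\T(r,u,d,c)$, and by equation~\eqref{eq:ch_drg} is exactly the polynomial whose largest root governed the extremal case of Theorem~\ref{thm:largest_graph}.

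The main argument is a contrapositive/monotonicity comparison. Suppose for contradiction that $\tau_2<\lambda(d,c)$. Setting $\theta=\tau_2$, I would locate $\theta$ in the interval structure of Lemma~\ref{lem:5.2}: there is an integer $d'$ with $\lambda_{d'-1}<\theta\leq\lambda_{d'}$, and Theorem~\ref{thm:largest_graph} gives $n\leq h(r,u,\theta)\leq M(r,u,d',c')$ where $c'=-F_{d'}^{(r,u)}(\theta)/G_{d'-1}^{(r,u)}(\theta)$. The crucial ingredient is the monotonicity recorded in the Remark following Theorem~\ref{thm:largest_graph}: the bound $M(r,u,\theta)$ is strictly increasing in $\theta$ on $[-1,u-2+2\sqrt q)$. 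Since $\theta=\tau_2<\lambda(d,c)$ and $\lambda(d,c)$ is the value of $\theta$ at which the bound equals exactly $n=M(r,u,d,c)$, strict monotonicity forces $h(r,u,\tau_2)<M(r,u,d,c)=n$, contradicting $n\leq h(r,u,\tau_2)$. Hence $\tau_2\geq\lambda(d,c)$.

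For the equality characterization, I would trace back through Theorem~\ref{thm:largest_graph}. If $\tau_2=\lambda(d,c)$ then $n=h(r,u,\tau_2)$, so $H$ attains the LP bound with the polynomial $f(x)=g(x)^2/(x-\theta)$ having all positive coefficients $f_i$ (established in the proof of Theorem~\ref{thm:largest_graph} via Theorems~\ref{thm:coef_posi} and the positivity result from \cite{CK07}). By Remark~\ref{rem:girth} equality forces the girth of $H$ to be at least $\deg f+1=2d$, and since $f$ has exactly $d$ zeros the number of nontrivial distinct eigenvalues is at most $d$; then Theorem~\ref{thm:drg} and Remark~\ref{rem:gen_Moore} identify $\Gamma_H$ as a distance-regular graph with $g\geq 2d$, i.e.\ $H$ is a generalized Moore geometry with intersection array given by the tridiagonal entries of $\T(r,u,d,c)$ (using \eqref{eq:ch_drg} and Lemma~\ref{lem:T} to match the characteristic polynomial). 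Conversely, such a geometry realizes $n=M(r,u,d,c)$ with $\tau_2=\lambda(d,c)$ by direct computation of its spectrum from the tridiagonal matrix.

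The step I expect to be the main obstacle is making the monotonicity comparison fully rigorous at the boundary cases, namely when $\tau_2$ lands exactly on some $\lambda_j$ or when the unique pair $(d,c)$ from Lemma~\ref{lem:M} has $c=1$ (the transition point $M(r,u,d,1)=\lim_{c\to\infty}M(r,u,d+1,c)$). Here I must verify that the indices $d'$ and $d$ and the continuity of the threshold map $\theta\mapsto M(r,u,\theta)$ across adjacent intervals are handled consistently, so that the strict inequality $\tau_2<\lambda(d,c)$ genuinely translates into a strict drop in the bound rather than a mere non-strict one. The remaining verifications—closing the equality characterization and the converse spectral computation—are routine given the earlier machinery.
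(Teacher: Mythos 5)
Your proposal is correct and follows essentially the same route as the paper: the paper also argues by contradiction, pairing $\tau_2$ with the unique $(d',c')$ satisfying $\tau_2=\lambda(d',c')$, applying the LP bound with $f(x)=\bigl(\sum_{i=0}^{d'-1}c'F_i^{(r,u)}(x)+F_{d'}^{(r,u)}(x)\bigr)^2/(x-\lambda(d',c'))$ to get $n\leq M(r,u,d',c')<M(r,u,d,c)=n$, and then reading off the equality case from Theorem~\ref{thm:largest_graph}. Your phrasing via $\theta=\tau_2$ in Theorem~\ref{thm:largest_graph} plus the monotonicity remark is the same argument in only slightly different packaging.
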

\begin{proof}
Assume $\tau_2 < \lambda(d,c)$ holds. 
There uniquely exists $(d',c')$ such that 
 $\tau_2=\lambda(d',c')$ by a similar manner to the proof of Lemma~\ref{lem:M}. 
Note that $\lambda(d',c')<\lambda(d,c)$ and $M(r,u,d',c')<M(r,u,d,c)$. 
Applying Theorem~\ref{thm:lp_bound} to \[
f(x)=\frac{\left( \sum_{i=0}^{d'-1} c'F_i^{(r,u)}(x)+F_{d'}^{(r,u)}(x) \right)^2}{x-\lambda(d',c')}, 
\]
 we have the upper bound $n \leq M(r,u,d',c')$. This contradicts $M(r,u,d',c')<M(r,u,d,c)=n$. 
Therefore $\tau_2 \geq \lambda(d,c)$ holds. 
Moreover $\tau_2 = \lambda(d,c)$ if and only if 
$H$ attains the bound from Theorem~\ref{thm:largest_graph}. 
The assertion therefore follows.  
\end{proof}

\section{Concluding remarks}
In this paper, we have established the linear programming bound for regular uniform hypergraphs (Theorem~\ref{thm:lp_bound}). 
The linear programming bound is applicable to problems relating to distinct eigenvalues, particularly to the second eigenvalue. 
The following are the problems we considered in this paper. 
\begin{problem}\label{prob:r1}
Given $r\geq 3$, $u\geq 2$ and $\theta<u-2+2\sqrt{q}$, determine the maximum number of vertices of an $r$-regular $u$-uniform hypergraph $H$ with $\tau_2(H)\leq \theta$. 
\end{problem}
\begin{problem}\label{prob:r2}
Given $n,r\geq 3$ and $u\geq 2$, determine the $r$-regular $u$-uniform hypergraphs $H$ with smallest $\tau_2$ among all $r$-regular $u$-uniform hypergraphs with $n$ vertices.
\end{problem} 
Generalized Moore geometries, which have the structure of a distance-regular graph with large girth, are optimal for both Problems~\ref{prob:r1} and \ref{prob:r2}. 
Besides generalized Moore geometries, we determined optimal graphs with several parameters for Problem~\ref{prob:r1}, in particular, orthogonal arrays are optimal. We expect more optimal graphs for the two problems. 
The classification of the optimal graphs for the problems with given parameters is interesting, that is related to finding good combinatorial structures like distance-regular graphs or combinatorial designs. 

For some parameters $r,u,\tau_2$, we can improve the general bound (Theorem~\ref{thm:largest_graph}), for example $(r,u,\tau_2)=(6,2,2)$ (Lemma~\ref{lem:v(6,2)}). We are interested in determination of optimal graphs for such a situation. 
\begin{problem}
Find parameters $(r,u,\tau_2)$ for which we can improve the upper bound from Theorem~\ref{thm:largest_graph}. 
\end{problem}
\begin{problem}
Is the second subconstituent of the Hoffman--Singleton graph (see \cite[Theorem 13.1.1]{BCNb})  an optimal graph for Problem~\ref{prob:r1} with parameters $(r,u,\tau_2)=(6,2,2)$? One has $42\leq v(6,2) \leq 45$, whose lower bound is obtained from the example. 
\end{problem}

\appendix

\section{Table on $h(r,u,\theta)$, $v(k,\theta)$ and $b(k,\theta)$}
\begin{table} 
\caption{$k$-regular graphs ($u=2$)} \label{tab:u=2}
\quad 

\begin{minipage}{0.5\hsize}
\begin{center}
\begin{tabular}{|c||c|c|c|}
\hline
 $k$ & $\theta$  & $b(k,\theta)$ & $v(k,\theta)$ \\ \hline \hline 

 3 & $-1$ & $4^{K_{4}}$ & $4^{K_{4}}$ \\ \hline
 3 & $<0$ & $4^{K_{4}}$ & $4^{K_{4}}$ \\ \hline
 3 & 0 & $6^{K_{3,3}}$ & $6^{K_{3,3}}$ \\ \hline
 3 & $<1$ & $6^{K_{3,3}}$ & $6^{K_{3,3},{\rm Cor.}\ref{cor:<1}}$ \\ \hline
 3 & 1 & $8^{K_{4,4}^-}$ & $10^{Pt}$ \\ \hline
 3 & $\sqrt{2}$ & $14^{GH}$ & $14^{GH}$ \\ \hline
 3 & $\sqrt{6}-1$ & $14^{GH}$ & $14^{GH}$ \\ \hline
 3 & $\sqrt{3}$ & $18^{AG}$ & $18^{\cite{CKNV16}}$ \\ \hline
 3 & $2$ & $30^{GO}$ & $30^{GO}$ \\ \hline
 3 & $\sqrt{6}$ & $126^{GD}$ & $126^{GD}$ \\ \hline \hline
 4 & $-1$  & $5^{K_{5}}$ & $5^{K_{5}}$ \\ \hline
 4 & $<0$  & $5^{K_{5}}$ & $5^{K_{5}}$ \\ \hline
 4 & 0 & $8^{K_{4,4}}$ & $8^{K_{4,4}}$ \\ \hline
 4 & $<1$  & $8^{K_{4,4}}$ & $8^{K_{4,4},{\rm Cor.}\ref{cor:<1}}$ \\ \hline
 4 & 1 & $10^{K_{5,5}^-}$ & $12^{\cite{CKNV16}}$ \\ \hline
 4 & $\sqrt{5}-1$  & $10^{K_{5,5}^-,LP}$ & $ 12^{\cite{CKNV16}}$ \\ \hline
 4 & $\sqrt{2}$& $14^{CHD}$ & $ ?\leq 19.1^{LP}$ \\ \hline
 4 & $\sqrt{3}$ & $26^{GH}$ & $26^{GH}$ \\ \hline
 4 & 2&  $ 32^{AG}$ & $35^{Od}$ \\ \hline
 4 & $\sqrt{6}$  & $80^{GO}$ & $80^{GO}$ \\ \hline
 4 & $3$& $728^{GD}$ & $728^{GD}$ \\ \hline
\hline 
 5 & $-1$ & $6^{K_{6}}$ & $6^{K_{6}}$ \\ \hline
 5 & $<0$ & $6^{K_{6}}$ & $6^{K_{6}}$ \\ \hline
 5 & $0$ & $10^{K_{5,5}}$ & $10^{K_{5,5}}$ \\ \hline
 5 & $<1$ & $10^{K_{5,5}}$ & $10^{K_{5,5},{\rm Cor.}\ref{cor:<1}}$ \\ \hline
 5 & $1$ & $12^{K_{6,6}^-}$ & $16^{Cl}$ \\ \hline
 5 & $\sqrt{3}$ & $22^{HD}$ & $?\leq 30^{LP_c}$ \\ \hline
 5 & $2$ & $42^{GH}$ & $42^{GH}$ \\ \hline
 5 & $\sqrt{5}$ & $50^{AG}$ & $?\leq 54.9^{LP}$ \\ \hline
 5 & $2\sqrt{2}$ & $170^{GO}$ & $170^{GO}$ \\ \hline
 5 & $2\sqrt{3}$ & $2730^{GD}$ & $2730^{GD}$ \\ \hline 
\end{tabular}
\end{center}
\end{minipage}
\begin{minipage}{0.5\hsize}
\begin{center}
\begin{tabular}{|c||c|c|c|}
\hline
 $k$ & $\theta$  & $b(k,\theta)$ & $v(k,\theta)$ \\ \hline \hline 

 7 & $-1$  & $8^{K_{8}}$ & $8^{K_{8}}$ \\ \hline
 7 & $<0$  & $8^{K_{8}}$ & $8^{K_{8}}$ \\ \hline
 7 & 0  & $14^{K_{7,7}}$ & $14^{K_{7,7}}$ \\ \hline
 7 & $<1$  & $14^{K_{7,7}}$ & $14^{K_{7,7},{\rm Cor.}\ref{cor:<1}}$ \\ \hline
 7 & 1  & $16^{K_{8,8}^-}$ & $18^{\cite{BCS}_{177-180}}$ \\ \hline
 7 & 2  & $ 30^{HD}$ & $50^{HoSi}$ \\ \hline
 7 & $\sqrt{7}$  & $98^{AG}$ & $? \leq 106^{LP_c}$ \\ \hline 
\hline

 9 & $-1$  & $10^{K_{10}}$ & $10^{K_{10}}$ \\  \hline
 9 & $<0$  & $10^{K_{10}}$ & $10^{K_{10}}$ \\  \hline 
 9 & $0$ & $18^{K_{9,9}}$ & $18^{K_{9,9}}$ \\  \hline 
 9 & $<1$  & $18^{K_{9,9}}$ & $18^{K_{9,9},{\rm Cor}.\ref{cor:<1}}$ \\  \hline 
 9 & $1$ & $20^{K_{10,10}^-}$ & $24^{\cite{BCS}_{183}}$ \\  \hline 
 9 & $\sqrt{3}$  & $26^{CPG}$ & $? \leq 42.7^{LP}$ \\  \hline 
 9 & $\sqrt{5}$  & $38^{HD}$ & $? \leq 68.2^{LP}$ \\  \hline 
 9 & $\sqrt{6}$  & $50^{SD_{25,9,3}}$ & $? \leq 90^{LP_c}$ \\  \hline 
 9 & $\sqrt{7}$  & $74^{SD_{37,9,2}}$ & $? \leq 116^{LP_c}$ \\  \hline 
 9 & $2\sqrt{2}$  & $146^{GH}$ & $146^{GH}$ \\  \hline 
 9 & $3$ &  $162^{AG}$ & $? \leq 178^{LP}$ \\  \hline 
 9 & $4$ &  $1170^{GO}$ & $1170^{GO}$ \\  \hline 
 9 & $2\sqrt{6}$  & $74898^{GD}$ & $74898^{GD}$ \\  \hline 
\end{tabular}
\end{center}
\end{minipage}
\end{table}

\begin{table} 
\caption{$3$-uniform hypergraphs ($u=3$)} \label{tab:u=3}
\begin{center}
\begin{tabular}{|c||c|c|c|c|c|}
\hline
$(r,u)$ & $k$ & $\theta$ & $h(r,u,\theta)$ & $b(k,\theta)$ & $v(k,\theta)$ \\ \hline \hline

$(3,3)$ & 6 & $-1$& $7^{STS}$ & $7^{K_{7}}$ & $7^{K_{7}}$ \\ \hline
$(3,3)$ & 6 & $<0$& $7^{STS}$ & $7^{K_{7}}$ & $7^{K_{7}}$ \\ \hline
$(3,3)$ & 6 & $0$& $9^{OA}$ & $12^{K_{6,6}}$ & $12^{K_{6,6}}$ \\ \hline
$(3,3)$ & 6 & $<1$& $9^{OA, {\rm Lem.}\ref{lem:h_<1}}$ & $12^{K_{6,6}}$ & $12^{K_{6,6},{\rm Cor.}\ref{cor:<1}}$ \\ \hline
$(3,3)$ & 6 & $1$& $15^{GQ}$ & $14^{K_{7,7}^-}$ & $15^{\cite{CKNV16}}$ \\ \hline
$(3,3)$ & 6 & $\sqrt{3}$& $?\leq 20.8^{LP}$ & $22^{CHD}$ & $?\leq 34.3^{LP}$ \\ \hline
$(3,3)$ & 6 & $2$& $?\leq 24^{LP,c\not\in \mathbb{Z}}$ & $32^{SD_{16,6,2}}$ & $42^{\Gamma_2(HoSi)} \leq  ?\leq 45^{{\rm Lem}.\ref{lem:v(6,2)}}$ \\\hline
$(3,3)$ & 6 & $\sqrt{5}$& $? \leq 30^{LP, no SRG}$ & $62^{GH}$ & $62^{GH}$ \\ \hline
$(3,3)$ & 6 & $3$& $63^{GH}$ & $162^{p_1}$ & $?\leq 209.7^{LP}$ \\ \hline
$(3,3)$ & 6 & $\sqrt{10}$& $? \leq 75.2^{LP}$ & $312^{GO}$ & $312^{GO}$ \\ \hline
$(3,3)$ & 6 & $\sqrt{15}$& $?\leq 283.2^{LP}$ & $7812^{GD}$ & $7812^{GD}$ \\ \hline
\hline

$(4,3)$ & 8 & $-1$ & $9^{STS}$ & $9^{K_{9}}$ & $9^{K_{9}}$ \\  \hline
$(4,3)$ & 8 & $<0$ & $9^{STS}$ & $9^{K_{9}}$ & $9^{K_{9}}$ \\  \hline
$(4,3)$ & 8 & $0$ & $12^{OA}$ & $16^{K_{8,8}}$ & $16^{K_{8,8}}$ \\  \hline
$(4,3)$ & 8 & $<1$ & $12^{OA,{\rm Lem.}\ref{lem:h_<1}}$ & $16^{K_{8,8}}$ & $16^{K_{8,8},{\rm Cor}.\ref{cor:<1}}$ \\  \hline
$(4,3)$ & 8 & $1$ & $15^{OA^-,Lem.\ref{lem:(4,3,1)}}$ & $18^{K_{9,9}^-}$ & $21^{\cite{BCS}_{181,182}}$ \\  \hline
$(4,3)$ & 8 & 2 & $? \leq 30^{LP, noSRG}$ & $30^{CHD}$ & $? \leq 51^{LP}$ \\  \hline 
$(4,3)$ & 8 & $\sqrt{7}$ & $? \leq 54^{LP,noSRG}$ & $114^{GH}$ & $114^{GH}$ \\  \hline 
$(4,3)$ & 8 & $2\sqrt{2}$ & $? \leq 69^{LP_c}$ & $128^{AG}$ & $? \leq 140.7^{LP}$ \\  \hline 
$(4,3)$ & 8 & $\sqrt{14}$ & $? \leq 180.9^{LP}$ & $800^{GO}$ & $800^{GO}$ \\  \hline 
$(4,3)$ & 8 & $\sqrt{21}$ & $? \leq 1047.6^{LP}$ & $39216^{GD}$ & $39216^{GD}$ \\  \hline 
\hline

$(5,3)$ & 10 & $-1$ & $-\infty$ & $11^{K_{11}}$ & $11^{K_{11}}$ \\  \hline
$(5,3)$ & 10 & $<0$ & $-\infty$ & $11^{K_{11}}$ & $11^{K_{11}}$ \\  \hline
$(5,3)$ & 10 & $0$ & $15^{OA}$ & $20^{K_{10,10}}$ & $20^{K_{10,10}}$ \\  \hline 
$(5,3)$ & 10 & $<1$ & $15^{OA,{\rm Lem.}\ref{lem:h_<1}}$ & $20^{K_{10,10}}$ & $20^{K_{10,10},{\rm Cor.}\ref{cor:<1}}$ \\  \hline 
$(5,3)$ & 10 & $1$ & $27^{GQ}$ & $22^{K_{11,11}^-}$ & $27^{Sch}$ \\  \hline 
$(5,3)$ & 10 & $2$ & $? \leq 39^{LP_c}$ & $32^{SD_{16,10,6}}$ & $56^{Ge}$ \\  \hline 
$(5,3)$ & 10 & $\sqrt{5}$ & $? \leq 45^{LP_c}$ & $38^{CHD}$ & $?\leq 69.2^{LP}$ \\  \hline 
$(5,3)$ & 10 & $\sqrt{7}$ & $? \leq 60^{LP_c}$ & $62^{SD_{31,10,3}}$ & $?\leq 117.4^{LP}$ \\  \hline 
$(5,3)$ & 10 & $3$ & $? \leq 90^{LP_c}$ & $182^{GH}$ & $182^{GH}$ \\  \hline 
$(5,3)$ & 10 & $3\sqrt{2}$ & $? \leq 354^{LP_c}$ & $1640^{GO}$ & $1640^{GO}$ \\  \hline 
$(5,3)$ & 10 & $3\sqrt{3}$ & $? \leq 2907^{LP_c}$ & $132860^{GD}$ & $132860^{GD}$ \\  \hline 
\end{tabular}
\end{center}
\end{table}

Tables \ref{tab:u=2} and \ref{tab:u=3} show several exact values on $h(r,u,\theta)$, $v(k,\theta)$ and $b(k,\theta)$ for $k=r(u-1)\leq 10$ and $r\geq u$. 
For $\theta \geq 1$, all $v(k,\theta)$ and $b(k,\theta)$ in the tables are already determined in \cite{CKNV16,CKN19}, and almost all graphs that attain those values are generalized Moore geometries.  
We use the following notation in the tables. 

\quad


\noindent 
$?\leq n$: the corresponding value is not determined, and it has an upper bound $n$. 

\noindent
$LP$: LP bound (Theorem~\ref{thm:largest_graph}),  \qquad 
$LP_c$: Lemma~\ref{lem:count_tri} after LP bound (Theorem~\ref{thm:largest_graph}).

\noindent
$-\infty$: non-existence, $K_{n,n}^-$: Complete bipartite graph minus a perfect matching

\noindent
$STS$: Steiner triple system $S(2,3,k+1)$,  \qquad 
$OA$: $OA(u,r)$ (Lemma~\ref{lem:oa}). 

\noindent
$OA^-$: $OA(u+1,r+1)$ 
(Lemma~\ref{lem:oa^-}). 
\qquad 
$AG$: Affine geometry 
 minus a parallel class 

\noindent
$GQ$: Generalized quadrangle
\qquad
$GH$: Generalized hexagon 
\qquad $GO$: Generalized octagon

\noindent
$\cite{BCS}_i$: Graph of no.\ $i$ in \cite{BCS}, 
\qquad 
$SD_{v,k,\lambda}$: Incidence graph of symmetric $(v,k,\lambda)$-design 

\noindent
$HD$: Incidence graph of symmetric Hadamard design 

\noindent
$CHD$: Incidence graph of the complement of symmetric Hadamard design 

\noindent
$CPG$: Incidence graph of the complement of projective plane 

\noindent
$Pt$: Petersen graph, \qquad $Od$: Odd graph, \qquad  $Cl$: Clebsch graph, 

\noindent
$p_1$; Incidence graph of partial geometry $pg(6,2,2)$, \qquad
$HoSi$: Hoffman--Singleton graph, 

\noindent
$Ge$: Gewirtz graph, \qquad 
$Sch$: Complement of th Schl\"{a}fli graph, 

\noindent
$\Gamma_2(HoSi)$: Second subconstituent of the Hoffman--Singleton graph 

\noindent
$c \not\in \mathbb{Z}$: $c\not\in \mathbb{Z}$ for LP bound (Theorem~\ref{thm:largest_graph}) 

\noindent
$noSRG$: Strongly regular graph that attains LP bound (Theorem~\ref{thm:largest_graph})  does not exist

\bigskip

\noindent
\textbf{Acknowledgments.} 
We are grateful to the two anonymous referees for their comments and suggestions that have greatly improved the first version of our paper. Cioab\u{a} has been supported by NSF grants DMS-1600768,  CIF-1815922 and a JSPS Invitational Fellowship for Research in Japan S19016. 
With the support of the grant S19016, Cioab\u{a} stayed at Aichi University of Education 
during the period from November 22, 2019 to January 20, 2020. 
Koolen is partially supported by the National Natural Science Foundation of China (No.\ 12071454), Anhui Initiative in Quantum Information Technologies (No.\ AHY150000) and the National Key R and D Program of China (No.\ 2020YFA0713100).
Mimura is partially supported by JSPS Grants-in-Aid for Scientific Research Nos.\ 17H04822 and 21K03241.
Nozaki is partially supported by JSPS Grants-in-Aid for Scientific Research Nos.\ 16K17569, 18K03396, 19K03445, and 20K03527. 
Okuda is partially supported by JSPS Grants-in-Aid for Scientific Research Nos.\ 16K17594, 16K05132, 20K14310, and 20K03589.

 \noindent
{\it Sebastian M. Cioab\u{a}} \\
Department of Mathematical Sciences,
University of Delaware, 
Newark DE 19716-2553, USA. \\
E-mail address: {\tt cioaba@udel.edu}

\quad \\

\noindent
{\it Jack H. Koolen} \\
School of Mathematical Sciences, University of Science and Technology of China, 96 Jinzhai Road, Hefei, 230026, Anhui, PR China/ 
CAS Wu Wen-Tsun Key Laboratory of Mathematics, University of Science and Technology of China, 96 Jinzhai Road, Hefei, Anhui, 230026, PR China. \\
E-mail address: {\tt koolen@ustc.edu.cn}

\quad \\

\noindent
{\it Masato Mimura}\\
Mathematical Institute, Tohoku University, 
6-3, Aramaki Aza-Aoba, Aoba-ku, Sendai 980-8578, Japan. \\
E-mail address: {\tt mimura-mas@m.tohoku.ac.jp}

\quad \\

\noindent
{\it Hiroshi Nozaki}\\ 
	Department of Mathematics Education, 
	Aichi University of Education, 
	1 Hirosawa, Igaya-cho, 
	Kariya, Aichi 448-8542, 
	Japan.\\
E-mail address: {\tt hnozaki@auecc.aichi-edu.ac.jp}

\quad \\

\noindent
{\it Takayuki Okuda}\\ 
Graduate School of Advanced Science and Engineering, Hiroshima University, 1-3-1
Kagamiyama, Higashi-Hiroshima, 739-8526, Japan \\
E-mail address:  {\tt okudatak@hiroshima-u.ac.jp}


\begin{thebibliography}{9}

\bibitem{ACD06}
R.J.R. Abel, C.J. Colbourn, and J.H. Dinitz, 
Mutually orthogonal latin squares (MOLS), 
In {\it Handbook of Combinatorial Designs, Second Edition} (2006), 160--193, CRC Press.

\bibitem{A86}
N. Alon,  
Eigenvalues and expanders, 
{\it Combinatorica} {\bf 6} (1986), 83--96.

\bibitem{AM85}
N. Alon and V.D. Milman, 
$\lambda_1$, isoperimetric inequalities for graphs, and superconcentrators, 
{\it J. Combin.\ Theory Ser.\ B} {\bf 38} (1985), 73--88.

\bibitem{Ab}
R. Askey,
{\it Orthogonal Polynomials and Special Functions}, 
SIAM, Philadelphia, 1975. 

\bibitem {BGP19}
C. Bachoc, A. Gundert, and A. Passuello, 
The theta number of simplicial complexes,
{\em Israel J. Math.} {\bf 232} (2019), no. 1, 443--481. 

\bibitem{B00}
C.M. Ballantine, 
Ramanujan type buildings, 
{\it Can.\ J. Math.} {\bf 52} (2000), 
1121--1148. 

\bibitem{BIb}
    E. Bannai and T. Ito, 
    {\it Algebraic Combinatorics I: Association Schemes},
    Benjamin/Cummings, Menlo Park, CA, 1984.

\bibitem{BI86}
E. Bannai and T. Ito, 
Current research on algebraic combinatorics: supplements to our book, Algebraic combinatorics. I,  
{\it Graphs Combin.} {\bf 2} (1986), no.\ 4, 287--308. 



\bibitem{BD71}
R.C. Bose and T.A. Dowling, A generalization of Moore graphs of diameter $2$, {\em J. Combin. Theory, Ser. B} {\bf 11} (1971), 213--226.


\bibitem{BCNb}
A.E. Brouwer, A.M. Cohen, and A. Neumaier, 
{\it Distance-Regular Graphs},  
Springer-Verlag, Berlin, (1989). 

\bibitem{BH12}
A.E. Brouwer and W.H. Haemers, 
{\it Spectra of Graphs}, 
Springer, New York, (2012).


\bibitem{BCS}
F.C. Bussemaker, D.H. Cvetkovi\'c, and J.J. Seidel, Graphs related to exceptional root systems, {\em Report TH Eindhoven} 76-WSK-05, 1976.

\bibitem {Ci20}
S.M. Cioab\u{a}, 
Eigenvalues of graphs and spectral Moore theorems, {\em RIMS K\^{o}ky\^{u}roku}, 2169 (2020), 106--119, available at {\tt https://arxiv.org/abs/2004.09221}.

\bibitem{CKNV16}
S.M. Cioab\u{a}, J.H. Koolen, H. Nozaki, and J.R. Vermette,  Maximizing the order of a regular graph of given valency and second eigenvalue,
{\it SIAM J. Discrete Math.} {\bf 30} (2016), no.\ 3, 1509--1525.

\bibitem {CKN19} S.M. Cioab\u{a}, J.H. Koolen and H. Nozaki, A spectral version of the Moore problem for bipartite regular graphs, {\em Algebraic Combinatorics} {\bf 2} (2019) no. 6, p. 1219--1238.

\bibitem {CX15} S.M. Cioab\u{a} and P. Xu, Mixing rates of random walks with little backtracking, {\em CRM Proceeding Series}, published by the American Mathematical Society as part of the {\em Contemporary Mathematics Series}, 655 (2015), 27--59.

\bibitem{CK07}
H. Cohn and A. Kumar, 
Universally optimal distribution of points on spheres, 
{\it J. Amer.\ Math.\ Soc.} \textbf{20} (2007), no.\ 1, 99--184. 

\bibitem{CD12}
J. Cooper and A. Dutle, 
Spectra of uniform hypergraphs,
{\it Linear Algebra Appl.} \textbf{436} (2012), 
3268--3292. 

\bibitem{DKT16}
E.R. van Dam, J.H. Koolen, and H. Tanaka, 
Distance-regular graphs, 
{\it Electron.\ J. Comb.} (2016), 
\#DS22. 


\bibitem{DG81}
R.M. Damerell and M.A. Georgiacodis, 
On the maximum diameter of a class of distance-regular graphs, 
{\it Bull.\ London Math.\ Soc.} {\bf 13} (1981), no.\ 4, 316--322.

\bibitem{DR02}
A.M. Duval and V. Reiner, 
Shifted simplicial complexes are Laplacian integral, 
{\it Trans. Amer. Math. Soc.} {\bf 354} (2002), no.\ 11, 
4313--4344. 

\bibitem{DSS}
M. Dinitz, M. Schapira, and G. Shahaf, 
Approximate Moore Graphs are good expanders, 
{\it  J. Combin.\ Theory Ser.\ B} {\bf 141} (2020), 
240--263. 

\bibitem{D84}
J. Dodziuk,  
Difference equations, isoperimetric inequality and transience of certain random walks, 
{\it Trans.\ Amer.\ Math.\ Soc.} {\bf 284} (1984), 787--794. 

\bibitem{FL96}
K. Feng and W.-C.W. Li, 
Spectra of hypergraphs and applications, 
{\it J. Number Theory} {\bf 60} (1996), no.\ 1, 1--22. 

\bibitem {FW95}
J. Friedman and A. Wigderson,
On the second eigenvalue of hypergraphs,
{\em Combinatorica} {\bf 15} (1995), 43--65.


\bibitem {GR01} 
C. Godsil and G. Royle,
{\em Algebraic Graph Theory}, Springer Graduate Texts in Mathematics 207 (2001).


\bibitem {HS60}
A.J. Hoffman and R.R. Singleton, Moore graphs with diameter $2$ and $3$, {\em IBM Journal of Research and Development} {\bf 5} (196), 497--504.

\bibitem{HO07}
A. Hora and N. Obata, 
{\it Quantum Probability and Spectral Analysis of Graphs},  Theoretical and Mathematical Physics, Springer, Berlin (2007).

\bibitem {HJ13}
D. Horak and J. Jost, 
Spectra of combinatorial Laplace operators on simplicial complexes,
{\em Adv. Math.} {\bf 244} (2013), 303--336.

\bibitem {I83}
A.A. Ivanov, 
Bounding the diameter of a distance-regular graph, 
{\it Soviet Math.\ Doklady} {\bf 28} (1983), 
149--152. 

\bibitem {KMV}
A. Kostochka, D. Mubayi, and J. Verstra\"{e}te, 
Tur\'{a}n problems and shadows: Paths and cycles,
{\em J. Combin. Theory, Ser. A} {\bf 129} (2015), 57--79.


\bibitem{L04}
W.-C.W. Li, 
Ramanujan hypergraphs, 
{\it Geom.\ Funct.\ Anal.} {\bf 14} (2004), 
380--399.  

\bibitem{LS96}
W.-C.W. Li and P. Sol\'{e}, 
Spectra of regular graphs and hypergraphs and orthogonal polynomials, 
{\it European J. Combin.} {\bf 17} (1996), no.\ 5, 461--477.

\bibitem {Lim05} 
L.-H. Lim,
Singular values and eigenvalues of tensors: a variational approach, in: {\em Proceedings of the IEEE International Workshop on Computational Advances in Multi-Sensor Adaptive Processing (CAMSAP'05)}, vol. 1 (2005), 129--132.



\bibitem{MSS15} A. Marcus, D. Spielman, and N. Srivastava, Interlacing families I: Bipartite
Ramanujan graphs of all degrees, {\em Ann.\ of Math.} (2) {\bf  182} (2015), no.\ 1, 307--325.

\bibitem{M00}
M.G. Mart\'{i}nez, 
The finite upper half space and related hypergraphs, 
{\it J. Number Theory} {\bf 84} (2000),
342--360. 

\bibitem{M01}
M.G. Mart\'{i}nez, H.M. Stark, and A.A. Terras, 
Some Ramanujan hypergraphs associated to
${\rm GL}(n, \mathbb{F}_q)$, 
{\it Proc.\ Am.\ Math.\ Soc.} {\bf 129} (2001), 
1623--1629.

\bibitem{MW89}
B. Mohar and W. Woess, 
A survey on spectra of infinite graphs,
{\it Bull.\ London Math.\ Soc.} {\bf 21} (1989), 209--234. 

\bibitem{N15}
H. Nozaki, 
Linear programming bounds for regular graphs, 
{\it Graphs Combin.} {\bf 31} (2015), 1973--1984. 

\bibitem{PR17}
O. Parzanchevski and R. Rosenthal, 
Simplicial complexes: spectrum, homology and random walks,
{\em Random Structures Algorithms} {\bf 50} (2017), no. 2, 225--261. 

\bibitem {Qi5} 
L. Qi,
Eigenvalues of a real supersymmetric tensor, {\em J. Symbolic Comput.} {\bf 40} (2005),1302--1324.

\bibitem{R09}
J.A. Rodr\'{i}guez, 
Laplacian eigenvalues and partition problems in hypergraphs, 
{\it Appl.\ Math.\ Lett.} {\bf 22} (2009), no.\ 6, 916--921.

\bibitem{RvZ82}
C. Roos and A.J. van Zanten, On the existence of certain distance-regular graphs, {\em J. Combin. Theory, Ser. B} {\bf 33} (1982), 197--212.

\bibitem{S70}
J.H. Smith, 
Some properties of the spectrum of a graph, 
In {\it Combinatorial Structures and
their Applications (Proc.\ Calgary Internat.\ Conf., Calgary, Alta., 1969)}, pages 403-406, Gordon and Breach, New York, 1970.

\bibitem{Tb}
A. Terras, 
{\it Harmonic Analysis on Symmetric Spaces --higher rank spaces, positive definite matrix space and generalizations}, Second edition, Springer, New York, 2016. 

\bibitem{W07}
I.M. Wanless, 
Transversals in latin squares, 
{\it Quasigroups Related Systems} {\bf 15} (2007), 
169--190.



\end{thebibliography}
\end{document}